\DeclareMathOperator{\Gr}{Gr}
\DeclareMathOperator{\Hom}{Hom}
\DeclareMathOperator{\im}{im}
\begin{document}

\title*{Specht Polytopes and Specht Matroids}
\author{John D. Wiltshire-Gordon, Alexander Woo, and Magdalena Zajaczkowska}
\institute{John D. Wiltshire-Gordon \at University of Wisconsin, Madison; Department of Mathematics;
Van Vleck Hall;
480 Lincoln Drive;
Madison, WI 53706; USA, 
 \email{jwiltshiregordon@gmail.com} \\
 Alexander Woo \at University of Idaho; Department of Mathematics; University of Idaho; 875 Perimeter Drive MS 1103; Moscow, ID 83844; USA, 
\email{awoo@uidaho.edu} \\ 
Magdalena Zajaczkowska \at University of Warwick; Mathematics Institute; Gibbet Hill Rd; Coventry CV4 7AL;
UK, 
\email{m.a.zajaczkowska@warwick.ac.uk}}
%
%
\maketitle

\abstract*{The generators of the classical Specht module satisfy intricate relations.
  We introduce the Specht matroid, which keeps track of these
  relations, and the Specht polytope, which also keeps track of convexity relations.
  We establish basic facts about the Specht polytope, for example, that the symmetric
  group acts transitively on its vertices and irreducibly on its ambient real vector space.
  A similar construction builds 
  a matroid and polytope for a tensor product of Specht modules, 
  giving ``Kronecker matroids'' and ``Kronecker polytopes'' instead 
  of the usual Kronecker coefficients. We dub this process of 
  upgrading numbers to matroids and polytopes ``matroidification,''
  giving two more examples.   In the course of describing these objects, we also give
  an elementary account of the construction of Specht modules different from the standard one.
  Finally, we provide code to compute with Specht matroids and their Chow rings.
   }

\abstract{The generators of the classical Specht module satisfy intricate relations.
  We introduce the Specht matroid, which keeps track of these
  relations, and the Specht polytope, which also keeps track of convexity relations.
  We establish basic facts about the Specht polytope, for example, that the symmetric
  group acts transitively on its vertices and irreducibly on its ambient real vector space.
  A similar construction builds 
  a matroid and polytope for a tensor product of Specht modules, 
  giving ``Kronecker matroids'' and ``Kronecker polytopes'' instead 
  of the usual Kronecker coefficients. We dub this process of 
  upgrading numbers to matroids and polytopes ``matroidification,''
  giving two more examples.   In the course of describing these objects, we also give
  an elementary account of the construction of Specht modules different from the standard one.
  Finally, we provide code to compute with Specht matroids and their Chow rings.
   }

\hspace{.2in}

The irreducible representations of the symmetric group $S_n$ were worked out by Young and Specht in the early 20th century, and they remain omnipresent in algebraic combinatorics.  The symmetric group $S_n$ has a unique irreducible representation for each partition of $n$.  For example, $S_4$ has exactly five irreducible representations corresponding to the partitions
$$
(4) \;\;\;\;\; (3,1) \;\;\;\;\; (2,2) \;\;\;\;\; (2,1,1) \;\;\;\;\; (1,1,1,1).
$$
Young and Specht constructed these irreducible representations, which are now called \emph{Specht modules}.  Young~\cite{Young} gave a matrix representation and Specht~\cite{Specht} gave a combinatorial spanning set.  Garnir~\cite{Garnir} later explained how to rewrite Specht's spanning set in terms of Young's basis.  These rewriting rules are now called  \emph{Garnir relations}.  Modern accounts of these constructions can be found in Sagan~\cite{Sagan} or James and Kerber~\cite{JK}.

This classical approach, however, privileges Young's basis over
other bases and the Garnir relations over other linear dependencies.
Focusing on Young's basis and the Garnir relations immediately breaks
the symmetry of Specht's spanning set and
ignores its other combinatorial properties.  Certainly, there are linear relations
other than those given by Garnir and bases other than those given by Young to investigate!  

To this end, we introduce the \emph{Specht matroid}, which encodes all the
linear dependencies among the vectors of Specht's spanning set.  We
also introduce the \emph{Specht polytope}, which provides a way to
visualize the Specht module, since the polytope sits inside the corresponding real vector space
with positive volume, and the action of the symmetric group takes the
polytope to itself.

In the case of the partition $(n-1,1)$, we recover both classical
constructions and objects of current research interest.  The
corresponding Specht polytope is essentially the \emph{root
  polytope} of type $A_n$.  In Theorem~\ref{theorem:root_polytope}, we
record a result of Ardila, Beck, Hosten, Pfeifle and
Seashore~\cite{ABHPS} describing the faces of this polytope.  The
Specht matroid for the partition $(n-1,1)$ is the matroid of the braid
arrangement, and hence its Chow ring is the cohomology ring for the
moduli space $\overline{\mathcal{M}_{0,n}}$ of $n$ marked points on
the complex projective line~\cite{DP,FY}.

We compute further examples of Chow rings in \S\ref{section:chow},
including the solution to Problem~1 on Grassmannians in \cite{Sturmfels}, which partially inspired
this project.
We state a combinatorial conjecture for the graded dimensions of the
Chow rings for the Specht matroid for the partition $(2, 1^{n-1})$.
However, we do not study any further connections with moduli spaces.

Our approach allows us to 
upgrade familiar combinatorial coefficients to matroids and polytopes.
By analogy with \emph{categorification}, which sometimes upgrades numbers to vector spaces,
we call this process \emph{matroidification}, or \emph{polytopification} when working over the reals.
  This is the subject of Section~\ref{section:upgrading}.
In Theorem~\ref{theorem:kronecker}, 
we polytopify the Kronecker coefficients, building the \emph{Kronecker polytopes}.
We also construct the
\emph{Kronecker matroids} encoding the Garnir-style rewriting rules
that govern linear dependence in a tensor product of Specht modules.
An analogue of Young's basis for the Kronecker matroid would provide a
combinatorial rule for computing Kronecker coefficients.  In
Theorems~\ref{theorem:littlewood-richardson} and~\ref{theorem:plethysm}, we 
give similar results for Littlewood-Richardson coefficients and 
plethysm coefficients.





\bigskip  
The outline of the article is as follows. We begin in Section~\ref{section:introduction} with a self-contained
construction of the Specht module that is suited to our purposes.
This construction is a bit unusual in that it makes no mention of
tabloids, polytabloids, standard tableaux, or the group algebra.  In
Sections~\ref{section:spechtmatroids}, \ref{section:chow}, and
\ref{section:spechtpolytopes}, we introduce respectively the Specht
matroid, its Chow ring, and the Specht polytope; we then
prove some basic general facts about them.  Section
\ref{section:examples} is devoted to the partitions $(n-1,1)$ and
$(2,1^{n-1})$, for which the Specht matroids and polytopes coincide
with other well-studied objects.  We describe Kronecker,
Littlewood--Richardson, and plethysm matroids and polytopes in Section
\ref{section:upgrading}.  Section~\ref{section:code} includes code for
calculating the objects described in this paper.

\section{Introduction to Specht Modules} \label{section:introduction}
Our aim in this section is to give an exposition of the representation
theory of the symmetric group that is motivated from elementary
combinatorial considerations.  The reader who wishes to start with the
main statements should first look at
Definitions~\ref{definition:YoungsChar} and~\ref{SpechtMatrix} and
Theorem~\ref{theorem:SpechtIrred}.

We begin with an elementary combinatorics problem: In how many distinct ways can the letters in a word \texttt{TENNESSEE} be rearranged?  There are $9!$ ways to move the letters around, but since some letters are repeated, some of these rearrangements give the same string.  For example, the four \texttt{E}s can be rearranged to appear in any order without affecting the string.  This reasoning gives the following answer:
$$
\# \{\mbox{ rearrangements of \texttt{TENNESSEE} }\} = \frac{9!}{1! \cdot 4! \cdot 2! \cdot 2!}.
$$
The idea of rearranging letters can be formalized as an action of the symmetric group. In our example $S_9$ acts on the word TENNESSEE. The stabilizer subgroup of $S_9$ with respect to the word TENNESSEE is isomorphic to $S_1\times S_4\times S_2\times S_2$. Hence the previous argument actually provides an isomorphism of \emph{$S_{9}$-sets}, which is to say, a bijection that commutes with the group action: 
$$
\{\mbox{ rearrangements of \texttt{TENNESSEE} }\} \simeq \frac{S_{9}}{S_1 \times S_4 \times S_2 \times S_2}.
$$
Using the orbit-stabilizer theorem, we recover the numerical answer above.

Now we add a layer of complexity.  Suppose we wish to understand the $S_9$-set
$$
\begin{array}{c}
 \{\mbox{ rearrangements of \texttt{TENNESSEE} }\} \phantom{,}\\
 \times \\
 \{\mbox{ rearrangements of \texttt{SASSAFRAS} }\} , \\
\end{array} 
$$
where $S_9$ acts diagonally (i.e. simultaneously) on the two factors. This diagonal action makes sense because \texttt{SASSAFRAS} has the same number of letters as \texttt{TENNESSEE}. Each factor in this Cartesian product has a single $S_9$-orbit, but the product certainly does not!    For example, the pairs
$$
\left(
\begin{array}{c}
 \texttt{EEEENNSST} \\
 \texttt{SSSSAAAFR} \\
\end{array}
\right)
\;\;\; \mbox{and} \;\;\;
\left(
\begin{array}{c}
 \texttt{TENNESSEE} \\
 \texttt{SASSAFRAS} \\
\end{array}
\right)
$$
cannot be in the same orbit because the upper \texttt{E}s and lower \texttt{S}s always appear together in the first pair but not in the second.  Another proof is that their orbits have different sizes.  Indeed, if we consider a column such as $\genfrac{}{}{0pt}{}{\texttt{E}}{\texttt{S}}$ as a compound letter, then the total number of distinct rearrangements of the first compound word in these compound letters is $9!/(4! \cdot 2! \cdot 1! \cdot 1! \cdot 1!) = 7560$, which does not equal the number $9!/(1! \cdot 3! \cdot 2! \cdot 1! \cdot 1! \cdot 1!) = 30240$ of rearrangements of the second compound word.

For the construction of the Specht module, we are interested in \emph{free orbits}.  (Recall that an orbit is free if each of its points has trivial stabilizer.)  In our context, a non-trivial stabilizer comes from repeated columns.  So a pair is in a free orbit if and only if all of its columns are distinct.  For example,
$$
\left(
\begin{array}{c}
 \texttt{SNETNEESE} \\
 \texttt{SASSSAFAR} \\
\end{array}
\right)
$$
has no repeated columns, so its orbit is free.  We claim:
\begin{itemize}
\item there is only one free orbit, and therefore,
\item we have already found it.
\item The proof is basically a picture, and even better,
\item the proof-picture-idea is strong enough to construct a complete set of irreducible representations over $\mathbb{C}$ for the symmetric group $S_n$.  These are the Specht Modules.  (The story would be the same for any field of characteristic zero.)
\end{itemize}
Here is the picture.

\begin{figure}[!ht] 
\centering
\includegraphics{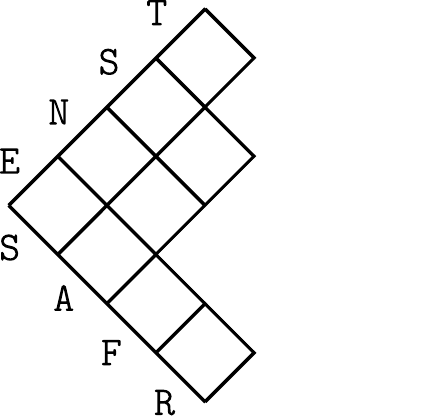}
\caption{A simultaneous histogram}
\label{figure:simultaneous-histogram}
\end{figure}

The boxes provide a simultaneous histogram tallying the letter multiplicities for each word.  From the picture, we see that the letter \texttt{E} from the word \texttt{TENNESSEE} must appear once with each of the letters \texttt{S}, \texttt{A}, \texttt{F}, and \texttt{R}.  Indeed, in order to keep distinct the four columns in which \texttt{E} appears, \texttt{E} must be paired with each of the four available letters in the bottom row.

Removing the four \texttt{E}s from the pool along with one copy of each of the letters \texttt{S}, \texttt{A}, \texttt{F}, and \texttt{R}, we may proceed to pair off \texttt{N} with the two letters \texttt{S} and \texttt{A}.  Continuing inductively, we see that the combinations that appear in a valid pair of rearrangements give the boxes in the diagram.

We give some definitions that encode these pictures.
\begin{definition}
A \emph{partition} of $n\in \mathbb{N}$ is an integer vector $\lambda=(\lambda_1,\ldots,\lambda_\ell)$ 
such that $\lambda_1\geq\lambda_2\geq\cdots\geq\lambda_\ell>0$ with $\lambda_1+\cdots+\lambda_\ell=n$.
The number $\ell=\ell(\lambda)$ is the \emph{length} of the partition.
\end{definition}
\noindent

\begin{definition}
A \emph{diagram} is a finite subset of $\mathbb{N}_{\geq 1}^2$.  The elements of a diagram are called \emph{boxes}.
\end{definition}

\begin{definition}
Given a partition $\lambda$, the \emph{diagram associated to $\lambda$} is
$$D(\lambda)=\{(i,j)\mid 1\leq j\leq \lambda_i\},$$
where, by convention, $\lambda_i=0$ if $i>\ell(\lambda)$.
\end{definition}
\noindent
The partition in our running example is $(4, 3, 1, 1)$; its associated diagram is
$$
 \{(1,1), (1,2), (1,3), (1,4), (2,1), (2,2), (2,3), (3,1), (4,1) \}.
$$
Here and everywhere else, we will use matrix coordinates, so $(2,3)$ denotes the box in the second row and third column.

The following proposition is immediate.

\begin{proposition}
A diagram $D$ is the diagram of some partition $\lambda$ if and only if $D$ is closed under coordinate-wise $\leq$.  In other words, $D=D(\lambda)$ for some $\lambda$ if and only if, for any $(i,j)\in D$ and any $(a,b)$ with $1\leq a\leq i$ and $1\leq b\leq j$, $(a,b)\in D$.
\end{proposition}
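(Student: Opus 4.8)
The plan is to prove the two implications separately, each by directly unwinding the definition of $D(\lambda)$; there is no real obstacle, only the bookkeeping of translating ``weakly decreasing partition'' into ``downward closed diagram.''

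For the forward implication I would suppose $D = D(\lambda)$ and take $(i,j) \in D$, so that $1 \le j \le \lambda_i$, together with $(a,b)$ satisfying $1 \le a \le i$ and $1 \le b \le j$. Since the sequence $\lambda_1 \ge \lambda_2 \ge \cdots$ (extended by zeros) is weakly decreasing, $a \le i$ forces $\lambda_a \ge \lambda_i$, whence $1 \le b \le j \le \lambda_i \le \lambda_a$ and therefore $(a,b) \in D(\lambda) = D$. That is the entire argument in this direction.

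For the converse I would suppose $D$ is downward closed. Because a diagram is by definition a \emph{finite} subset of $\NN_{\geq 1}^2$, only finitely many rows meet $D$; I set $\lambda_i := \max\{\, j : (i,j) \in D \,\}$ when row $i$ meets $D$ and $\lambda_i := 0$ otherwise. Applying downward closure with the first coordinate fixed shows that $\{\, j : (i,j) \in D \,\}$ is closed under passing to smaller positive values, so, being finite, it is exactly $\{1,\dots,\lambda_i\}$ (empty when $\lambda_i = 0$); hence $D = \{\,(i,j) : 1 \le j \le \lambda_i\,\}$ provided $\lambda$ is a partition. To see that it is, note that if $\lambda_i \ge 1$ then $(i,\lambda_i) \in D$, so downward closure in the \emph{first} coordinate gives $(a,\lambda_i)\in D$ and therefore $\lambda_a \ge \lambda_i$ for all $1 \le a \le i$; thus $(\lambda_i)$ is weakly decreasing, its nonzero terms form an initial segment $\lambda_1 \ge \cdots \ge \lambda_\ell > 0$, and $\lambda=(\lambda_1,\dots,\lambda_\ell)$ is a partition with $D = D(\lambda)$.

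If I had to name a ``hard'' point it would only be remembering to invoke finiteness of $D$ so that $\lambda$ has finite length, and noticing that downward closure must be used in each coordinate direction separately: once to make each occupied row an initial segment of columns, and once to make the row lengths weakly decreasing (which simultaneously rules out an empty row above a nonempty one). Everything else is immediate, consistent with the billing of the statement.
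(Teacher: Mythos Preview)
Your proof is correct; the paper provides no argument at all, declaring the proposition ``immediate,'' and your careful unwinding of the definition of $D(\lambda)$ in each direction is precisely the routine verification that word is meant to signal. The only content is indeed the two uses of downward closure (one per coordinate) and the appeal to finiteness, all of which you handle cleanly.
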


\begin{definition}\label{def:complementary_rearrangements}
We say two words $w_1, w_2$ of (the same) length $n$ \emph{have complementary rearrangements} if the diagonal action of $S_n$ on the product
$$
\{ \mbox{ rearrangements of $w_1$ } \} \times \{ \mbox{ rearrangements of $w_2$ } \} 
$$
has a unique free orbit.  If, furthermore, $(w_1, w_2)$ is in this free orbit, then we say $w_1$ and $w_2$ are \emph{complementary}.
\end{definition}

For example, our diagram above shows that  \texttt{TENNESSEE} and \texttt{SASSAFRAS} have complementary rearrangements.  The two words are not complementary, but \texttt{TENENEESS} and \texttt{SASSAFRAS} are complementary.

\begin{theorem} \label{theorem:diagrams}
Two words $w_1, w_2$ have complementary rearrangements if and only if there exists a parititon diagram $D$ with the ``simultaneous histogram'' property
\begin{align*}
\mbox{ $\#$occurrences in $w_i$ of its } & j^{\scriptscriptstyle \mbox{th}} \mbox{-most-common letter}  = \# \{ (d_1, d_2) \in D \,|\, d_i = j \}.
\end{align*}
\end{theorem}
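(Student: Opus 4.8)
The plan is to reduce the statement to the combinatorial heart of the running example: a pair of words is in a free orbit (has all columns distinct) precisely when their compound-letter multiplicities are forced to be as spread out as possible, and the unique such configuration is recorded by a partition diagram. I would set up notation first: for a word $w$ of length $n$, let its letter multiplicities, sorted in weakly decreasing order, be $a_1 \geq a_2 \geq \cdots$; call these the \emph{shape} of $w$. A pair of rearrangements $(v_1, v_2)$ determines a \emph{compound multiset} on $\mathbb{N}_{\geq 1}^2$, namely the multiset $M$ of columns $\binom{c}{d}$, where we relabel the letters of $w_i$ by $\mathbb{N}_{\geq 1}$ in decreasing order of multiplicity (ties broken arbitrarily but fixed). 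The orbit of $(v_1, v_2)$ under the diagonal $S_n$-action is determined exactly by $M$, and that orbit is free if and only if $M$ is a genuine \emph{set}, i.e.\ every column multiplicity is $0$ or $1$. The marginals of $M$ are fixed: the number of boxes with first coordinate $j$ must equal $a_j$ (the $j$-th multiplicity of $w_1$), and symmetrically for the second coordinate and $w_2$.

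\medskip
\noindent
\textbf{($\Leftarrow$)} Suppose a partition diagram $D$ with the stated simultaneous-histogram property exists. Then $D$ is itself a column-set with $0/1$ multiplicities realizing the required marginals, so any pair $(v_1, v_2)$ whose compound set equals $D$ lies in a free orbit; such a pair exists because the marginals of $D$ agree with the shapes of $w_1$ and $w_2$. It remains to check uniqueness of the free orbit, i.e.\ that \emph{every} $0/1$ column-set $M$ with these marginals equals $D$. This is exactly the inductive ``peeling'' argument illustrated in the picture (Figure~\ref{figure:simultaneous-histogram}): the most common letter of $w_1$, say with multiplicity $a_1$, occupies $a_1$ columns, and since those columns must be distinct, their second coordinates are $a_1$ distinct values; the only way to do this compatibly with the requirement that later (less common) first-coordinate-rows also fit is to use the $a_1$ most common letters of $w_2$, one each. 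Formally, I would prove that in any such $M$ the number of boxes in the first $i$ rows equals $a_1 + \cdots + a_i$, and dually for columns, and then argue by a greedy/exchange argument that $M$ must be ``top-left justified'' in both coordinates, hence $M = D(\lambda)$ where $\lambda$ is the shape of $w_1$ and $\lambda'$ (its conjugate, up to padding) is the shape of $w_2$. The cleanest phrasing uses the Gale–Ryser-type observation that a $0/1$ matrix with prescribed weakly-decreasing row sums $a$ and column sums $b$ and total support a partition shape exists and is unique iff $b$ is the conjugate partition $a'$.

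\medskip
\noindent
\textbf{($\Rightarrow$)} Conversely, if $w_1, w_2$ have complementary rearrangements, pick a representative of the unique free orbit; its compound set $M$ is a $0/1$ column-set with marginals the shapes of $w_1, w_2$. I must produce a \emph{partition diagram} with the histogram property. The point is that uniqueness of the free orbit forces $M$ to be invariant under the column-sorting operation on each coordinate separately: if $M$ were not top-left justified, one could permute rows (equivalently, rename letters of $w_1$ within an equal-multiplicity block — but more robustly, one exhibits a \emph{different} $0/1$ set with the same marginals by an exchange move), contradicting uniqueness. Hence $M$ is the diagram $D(\lambda)$ of the partition $\lambda$ given by the shape of $w_1$, and its column marginals then read off the shape of $w_2$, which is the histogram property.

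\medskip
\noindent
\textbf{Main obstacle.} The genuinely substantive step is the uniqueness argument in the ($\Leftarrow$) direction — equivalently, showing that ``all columns distinct'' together with fixed marginals pins down the configuration up to the diagonal action. Existence of \emph{a} free orbit is easy; it is the rigidity (that there is only \emph{one}) that requires the peeling/exchange induction, and one must be careful that relabeling letters within a block of equal multiplicity is a symmetry that does not change the orbit, so that the conclusion is a statement about diagrams up to this symmetry, which is why the answer is a genuine partition diagram rather than an arbitrary $0/1$ matrix. I expect the write-up to hinge on a clean lemma: \emph{a $0/1$ matrix with weakly decreasing row sums and weakly decreasing column sums whose support is a Young diagram is uniquely determined by its row sums, and its column sums are then the conjugate partition.}
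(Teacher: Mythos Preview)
Your proposal is correct and follows the paper's route: both directions rest on the peeling argument the paper sketches with the \texttt{TENNESSEE}/\texttt{SASSAFRAS} picture and then invokes as ``the iterative argument below Figure~\ref{figure:simultaneous-histogram},'' building the free orbit from $D$ for ($\Leftarrow$) and reading $D$ off the unique free orbit for ($\Rightarrow$). Your exchange-move step showing that the resulting $D$ must be a partition diagram, together with the Gale--Ryser framing, makes explicit what the paper leaves tacit; one small correction is that your closing ``clean lemma'' as phrased already \emph{assumes} the support is a Young diagram, whereas the statement your exchange argument actually proves (and needs) is that a $0/1$ matrix with prescribed weakly decreasing row and column sums is unique precisely when those sums are conjugate partitions.
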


\begin{proof}
We have already argued the hard direction.  If we have a partition diagram $D$ with the simultaneous histogram property, we can put in the box $(d_1,d_2)$ the $d_1$-th most common letter in $w_1$ and the $d_2$-th most common letter in $w_2$ (breaking ties arbitrarily).  The boxes have distinct pairs, so there exists at least one free orbit; this orbit is unique by the iterative argument below Figure~\ref{figure:simultaneous-histogram}.  In the other direction, rewrite the words using numbers in $\mathbb{N}_{\geq 1}$ so that (in each word) $k$ appears at least as often as $k+1$.  Then take 
$$
D = \{(d_1, d_2) \; | \; d_1 \mbox{ appears in a column with } d_2 \mbox{ in the unique free orbit} \}.
$$
\qed
\end{proof}

We will proceed to use the idea of complementary words to construct
irreducible representations of the symmetric group $S_n$.  Before
doing so, we recall some basic definitions in representation theory.

\begin{definition}
  Let $G$ be a group.  A \emph{(complex) representation} of $G$ is a
  $\mathbb{C}$-vector space $V$ along with a linear action of $G$ on
  $V$, meaning that, for any vectors $v,w\in V$, any scalar
  $c\in\mathbb{C}$, and any group element $g\in G$,
\begin{itemize}
\item $g\cdot v + g \cdot w = g\cdot (v+w)$, and
\item $c(g\cdot v) = g\cdot (cv)$.
\end{itemize}
Alternatively, the data of a representation can be encoded in a group
homomorphism $G\rightarrow GL(V)$, where $GL(V)$ is the group of
invertible linear automorphisms of $V$.
\end{definition}

\begin{definition}
  If $V$ and $W$ are representations of $G$, a linear transformation
  $\varphi: V\rightarrow W$ is a \emph{map of $G$ representations} if
  $\varphi$ commutes with the action of $G$, meaning that
  $\varphi(g\cdot v) = g\cdot (\varphi(v))$ for all $g\in G$ and all
  $v\in V$.
\end{definition}

\begin{definition}
  If $V$ and $W$ are representations of $G$, then the \emph{tensor product}
  $V\otimes W$ is a representation of $G$ under the action
$$g\cdot (v\otimes w) = (g\cdot v) \otimes (g\cdot w).$$
\end{definition}

\begin{definition}
  If $V$ is a representation of $G$, then $V^\vee=\Hom(V,\mathbb{C})$
  is a representation of $G$ under the action where, for $f\in V^\vee$
  and $g\in G$, $g\cdot f$ is the functional defined by
$$(g\cdot f)(v) = f(g^{-1}\cdot v)$$
for any $v\in V$.
\end{definition}

We need two definitions specific to the group $S_n$.

\begin{definition}
  For any $n$, the representation $\varepsilon$ is the one dimensional
  vector space on which $S_n$ acts by sign.  This means, for
  $v\in\varepsilon$, $g\cdot v=v$ if $g$ is an even permutation and
  $g\cdot v= -v$ if $g$ is an odd permutation.

  If $V$ is any representation of $S_n$, then $V\otimes\varepsilon$ is
  isomorphic to $V$ as a vector space, but the action of $S_n$ differs
  in that the action of an odd permutation picks up a sign.
\end{definition}

\begin{definition}
  Given a word $w$, the representation $V(w)$ is the vector space with
  basis given by the rearrangements of $w$, with $S_n$ acting by
  permuting our basis according to how it rearranges words.
\end{definition}

The representation $V(w)$ is special in that the action of $S_n$ is
actually induced from a combinatorial action of $S_n$ on a basis of
$V(w)$.  This property has a useful consequence.

\begin{lemma}
  Given any word $w$ of length $n$, we have a canonical isomorphism of
  $S_n$ representations $V(w)\simeq V(w)^\vee$ given by identifying our
  basis of rearrangements with its dual basis.
\end{lemma}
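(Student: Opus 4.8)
The plan is to write down the asserted map explicitly and check it is a map of representations; the only substantive point is that a \emph{permutation} action---one that permutes a basis---automatically makes the assignment ``basis vector $\mapsto$ dual basis vector'' equivariant, something that fails for an arbitrary basis.

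First I would fix notation. For each rearrangement $u$ of $w$, write $e_u \in V(w)$ for the corresponding basis vector, so that $g \cdot e_u = e_{g\cdot u}$ by the definition of $V(w)$, and write $e_u^\vee \in V(w)^\vee$ for the dual basis vector determined by $e_u^\vee(e_v) = \delta_{u,v}$.

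Next I would compute how $S_n$ acts on the dual basis. Using the defining formula $(g\cdot f)(v) = f(g^{-1}\cdot v)$, for any two rearrangements $u,v$ of $w$ we get
$$
(g\cdot e_u^\vee)(e_v) = e_u^\vee(g^{-1}\cdot e_v) = e_u^\vee(e_{g^{-1}\cdot v}) = \delta_{u,\, g^{-1}\cdot v} = \delta_{g\cdot u,\, v},
$$
so that $g \cdot e_u^\vee = e_{g\cdot u}^\vee$: the group permutes the dual basis in exactly the same way it permutes the original basis.

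Then I would define $\varphi\colon V(w) \to V(w)^\vee$ to be the linear map with $\varphi(e_u) = e_u^\vee$. It is a vector space isomorphism because it carries a basis bijectively to a basis, and it is a map of representations since $\varphi(g\cdot e_u) = \varphi(e_{g\cdot u}) = e_{g\cdot u}^\vee = g\cdot e_u^\vee = g\cdot\varphi(e_u)$, with both outer terms linear in $e_u$; because the basis of rearrangements depends only on $w$, this $\varphi$ is canonical. (Equivalently: in the rearrangement basis each $g$ acts by a permutation matrix $P_g$, and on $V(w)^\vee$ it acts by $(P_g^{-1})^{\!\top} = (P_{g^{-1}})^{\!\top} = P_g$, so the two representations have literally the same matrices.) There is essentially no obstacle here; the only thing worth underlining is that the isomorphism is canonical relative to the distinguished rearrangement basis---it uses the permutation-representation structure of $V(w)$, not merely its isomorphism type---which is exactly what lets one transport the dual pairing into a symmetric bilinear form on $V(w)$ itself in the constructions to follow.
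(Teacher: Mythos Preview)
Your proof is correct and follows essentially the same approach as the paper: you compute the $S_n$-action on the dual basis via $(g\cdot f)(v)=f(g^{-1}\cdot v)$, observe that $g\cdot e_u^\vee=e_{g\cdot u}^\vee$, and conclude that $e_u\mapsto e_u^\vee$ is an equivariant isomorphism. The paper's argument is exactly this, just stated a bit more tersely; your added remark about permutation matrices is a nice alternative phrasing but not a different idea.
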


\begin{proof}
Since $V(w_1)$ has a canonical basis $\{v_r\}$, where $r$ is an arbitrary rearrangement, $V(w_1)^\vee$ has a dual basis $\{f_r\}$, and
$$
(\sigma\cdot f_r)(v_{r^\prime}) =  \left\{
     \begin{array}{ll}
       1 & \mbox{  if } r^\prime = \sigma\cdot r \phantom{.}\\
       0 & \mbox{  if } r^\prime \neq \sigma\cdot r.
     \end{array}
   \right.
$$
Hence $\sigma\cdot f_r = f_{\sigma\cdot r}$, and the map sending $v_r$
to $f_r$ is an isomorphism of $S_n$ representations.
\qed
\end{proof}

We are now ready to construct representations of $S_n$ using the
combinatorics of complementary words discussed above (see Definition~\ref{def:complementary_rearrangements}).

\begin{corollary} \label{corollary:irrep} Suppose $w_1$ and $w_2$ are
  words of length $n$ with complementary rearrangements.  Then there
  is a unique-up-to-scaling map of $S_n$ representations
$$
\varphi \colon V(w_2) \otimes \varepsilon \longrightarrow V(w_1),
$$
and the image of $\varphi$ is an irreducible representation.
\end{corollary}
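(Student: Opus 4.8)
The plan is to compute the dimension of $\Hom_{S_n}\!\big(V(w_2)\otimes\varepsilon,\,V(w_1)\big)$ directly and show it equals $1$; this yields both the existence and the uniqueness up to scaling of a nonzero $\varphi$ in one stroke. Using $\varepsilon^\vee\cong\varepsilon$ together with the self-duality $V(w_i)\cong V(w_i)^\vee$ from the preceding lemma, I would first rewrite
$$
\Hom_{S_n}\!\big(V(w_2)\otimes\varepsilon,\,V(w_1)\big)\;\cong\;\big(V(w_1)\otimes V(w_2)\otimes\varepsilon\big)^{S_n}.
$$
The point of this manipulation is that $V(w_1)\otimes V(w_2)$ is the permutation representation $\mathbb{C}[X]$ on the very $S_n$-set $X=\{\text{rearrangements of }w_1\}\times\{\text{rearrangements of }w_2\}$ studied above, so the problem reduces to understanding $\big(\mathbb{C}[X]\otimes\varepsilon\big)^{S_n}$.

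Next I would decompose $X$ into $S_n$-orbits, $X=\bigsqcup_k O_k$, so that $\big(\mathbb{C}[X]\otimes\varepsilon\big)^{S_n}=\bigoplus_k\big(\mathbb{C}[O_k]\otimes\varepsilon\big)^{S_n}$. For a single orbit $O_k\cong S_n/H_k$, an invariant vector $\sum_x c_x\,x$ must satisfy $c_{\tau x}=\operatorname{sgn}(\tau)\,c_x$ for all $\tau\in S_n$; fixing a basepoint then shows this subspace is $1$-dimensional when $H_k$ consists entirely of even permutations and is $0$ otherwise. Here the combinatorics of the earlier discussion enters: the stabilizer of a pair $(r_1,r_2)$ is the product of the symmetric groups on the blocks of positions carrying the same pair of letters — a Young subgroup $S_{m_1}\times\cdots\times S_{m_t}$ — and such a subgroup contains a transposition, hence an odd permutation, unless every $m_c=1$, which is precisely the condition (noted above) that all columns of $(r_1,r_2)$ are distinct, i.e. that $O_k$ is a \emph{free} orbit. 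Therefore $\dim\big(\mathbb{C}[X]\otimes\varepsilon\big)^{S_n}$ equals the number of free orbits of $S_n$ on $X$, which is $1$ by the hypothesis that $w_1$ and $w_2$ have complementary rearrangements. This constructs $\varphi$ and pins it down up to scaling.

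For the irreducibility of $U:=\im\varphi$, I would invoke Maschke's theorem, so that $V(w_1)$ and $V(w_2)\otimes\varepsilon$ are semisimple, and read off from $\dim\Hom_{S_n}\!\big(V(w_2)\otimes\varepsilon,\,V(w_1)\big)=1$ that there is exactly one irreducible $V_\ast$ occurring in both, and that it occurs with multiplicity one in each. Since $U$ is simultaneously a quotient of $V(w_2)\otimes\varepsilon$ and a submodule of $V(w_1)$, the multiplicity of every irreducible in $U$ is bounded above by its multiplicity in each of these two; hence $U$ is a nonzero submultiple of $V_\ast$, so $U\cong V_\ast$ is irreducible.

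I expect the only genuine obstacle to be the orbit–stabilizer bookkeeping of the middle step: namely, making precise that tensoring the permutation module by $\varepsilon$ annihilates the contribution of every orbit with nontrivial stabilizer and retains exactly the free orbits — this is where the sign representation earns its keep. The rest is formal: the duality rewriting is routine use of the earlier self-duality lemma, and the final irreducibility argument is the standard ``$\Hom$ has dimension one'' trick in a semisimple category.
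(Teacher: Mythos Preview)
Your proof is correct and follows essentially the same path as the paper's: both identify $\Hom_{S_n}\!\big(V(w_2)\otimes\varepsilon,\,V(w_1)\big)$ with a one-dimensional space by reducing to the orbit analysis on pairs of rearrangements (the paper via coinvariants and tensor--hom adjunction, you via invariants after invoking self-duality), and both deduce irreducibility from Maschke together with the one-dimensionality of this $\Hom$ space. The only cosmetic difference is that the paper argues irreducibility by composing $\varphi$ with the projection onto a hypothetical proper summand to produce a second nonzero map, whereas you phrase the same contradiction as a multiplicity count in the semisimple decomposition.
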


\begin{proof}
Consider an arbitrary map of $S_n$ representations (meaning a linear map $\Psi$ where $\sigma\cdot \Psi(v) = \Psi(\sigma\cdot v)$ for all $v$)
$$\Psi: V(w_2) \otimes V(w_1)^\vee \rightarrow \epsilon.$$
Any such map must factor through the quotient 
$$
V(w_2) \otimes V(w_1)^\vee / W,
$$
where $W$ is the subspace spanned by elements of the form $(\mathrm{sign}(\sigma)v - \sigma v)$, for any $v$.  In this quotient,
any element of $S_n$ acts on the image of any vector by sign.  Pairs of rearrangements form a basis for the tensor product, so the images of these basis vectors still span the quotient.  Suppose some pair of rearrangements has a repeated column; then swapping those columns fixes the pair.  The vectors indexed by such pairs become zero in the quotient because transpositions are odd.

By Theorem~\ref{theorem:diagrams}, the action on pairs has a unique free orbit.  Any two vectors in the free orbit are related by a unique permutation, and so any vector spans the quotient, which must therefore be one-dimensional.  Using tensor-hom adjunction,
\begin{align*}
\Hom(V(w_2) & \otimes V(w_1)^\vee/W\;, \; \varepsilon) \\ & \simeq \Hom(V(w_2) \otimes V(w_1)^\vee\;,\; \varepsilon) & 
\\  & \simeq \Hom(V(w_2)\otimes \varepsilon \;,\; V(w_1) ), &
\end{align*}
where the first space is one-dimensional by the previous argument.  (Note that all isomorphisms are natural.)  We may take $\varphi$ to be any nonzero vector in the last hom-space.  This shows we have a unique-up-to-scaling linear map
$$
\varphi \colon V(w_2) \otimes \varepsilon \longrightarrow V(w_1).
$$

Now we show $V = \im \varphi$ is irreducible. Suppose $U \subset V$ is a proper subrepresentation.  By Maschke's theorem, there exists a complementary subrepresentation $U' \subseteq V$ with the property that $V = U \oplus U' $.  Let $\pi \colon V \to V$ denote the projection with kernel $U'$ and image $U$.  But now the composite
$$
V(w_2) \otimes \varepsilon \longrightarrow V \overset{\pi}{\longrightarrow} V \longrightarrow V(w_1)
$$
cannot be a scalar multiple of $\varphi$ since it is nonzero and has a different image.
\qed
\end{proof}

\noindent
The following definition will help us write an explicit example of the linear map $\varphi$.

\begin{definition} \label{definition:YoungsChar}
Let $w_1, w_2$ be fixed words of length $n$, and let $r_1$ and $r_2$ be arbitrary rearrangements of $w_1$ and $w_2$ respectively.  Define \emph{Young's character} 
$$
\mathrm{Y}_{w_1,w_2}(r_1, r_2) = \sum_{\sigma} \mathrm{sign}(\sigma),
$$
where $\sigma \in S_n$ ranges over all permutations such that $\sigma \cdot w_1 = r_1$ and $\sigma \cdot w_2 = r_2$.
\end{definition}

\begin{proposition} \label{proposition:young}
Young's character takes values in $\{-1, 0, 1\}$.   Whenever writing $r_2$ on top of $r_1$ has a repeated column, we have $\mathrm{Y}_{w_1,w_2}(r_1, r_2) = 0$.  If $w_1$ and $w_2$ are complementary, then $\mathrm{Y}_{w_1,w_2}(r_1, r_2) \neq 0$ exactly when all columns are distinct.
\end{proposition}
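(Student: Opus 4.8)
The plan is to study the set $A(r_1,r_2)=\{\sigma\in S_n:\sigma\cdot w_1=r_1,\ \sigma\cdot w_2=r_2\}$ over which the sum defining $\mathrm{Y}_{w_1,w_2}(r_1,r_2)$ is taken. First I would observe that $A(r_1,r_2)$ is either empty (in which case $\mathrm{Y}_{w_1,w_2}(r_1,r_2)=0$ trivially) or a single left coset $\sigma_0 H$ of the pointwise stabilizer $H=\{\tau\in S_n:\tau\cdot w_1=w_1,\ \tau\cdot w_2=w_2\}$: if $\sigma_0$ is any fixed element of $A(r_1,r_2)$, then $\sigma\in A(r_1,r_2)$ if and only if $\sigma_0^{-1}\sigma$ fixes both $w_1$ and $w_2$. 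Hence whenever $A(r_1,r_2)\neq\emptyset$ we have $\mathrm{Y}_{w_1,w_2}(r_1,r_2)=\mathrm{sign}(\sigma_0)\sum_{\tau\in H}\mathrm{sign}(\tau)$, so everything reduces to computing this alternating sum over $H$.

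Next I would identify $H$ combinatorially. A permutation fixes $w_1$ exactly when it only interchanges positions carrying equal letters of $w_1$, and likewise for $w_2$; therefore $H$ consists of the permutations that only interchange positions lying in a common column of the array obtained by writing $w_2$ over $w_1$. Thus $H\cong\prod_c S_{m_c}$, the product over the distinct column types $c$ of that array, where $m_c$ is the multiplicity with which $c$ occurs. Since the sign character is multiplicative across a direct product, and since $\sum_{\tau\in S_m}\mathrm{sign}(\tau)$ equals $1$ for $m\le 1$ and $0$ for $m\ge 2$ (left multiplication by a transposition is a sign-reversing bijection of $S_m$), the sum $\sum_{\tau\in H}\mathrm{sign}(\tau)$ is $1$ if the array $w_2$-over-$w_1$ has all columns distinct and $0$ if some column repeats. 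Also, any $\sigma\in A(r_1,r_2)$ merely permutes positions simultaneously in both words, so when $A(r_1,r_2)\neq\emptyset$ the array $r_2$-over-$r_1$ has the same multiset of columns as $w_2$-over-$w_1$. Combining these facts shows $\mathrm{Y}_{w_1,w_2}(r_1,r_2)\in\{-1,0,1\}$, and it shows that if $r_2$-over-$r_1$ has a repeated column then either $A(r_1,r_2)$ is empty or the stabilizer sum vanishes; either way $\mathrm{Y}_{w_1,w_2}(r_1,r_2)=0$.

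For the final assertion, assume $w_1$ and $w_2$ are complementary and that $r_2$-over-$r_1$ has all columns distinct. By the previous paragraph it only remains to show $A(r_1,r_2)\neq\emptyset$, i.e. that $(r_1,r_2)$ lies in the $S_n$-orbit of $(w_1,w_2)$. Having all columns distinct is precisely the statement that the stabilizer of $(r_1,r_2)$ is trivial, so $(r_1,r_2)$ lies in a free orbit of the diagonal action; by Definition~\ref{def:complementary_rearrangements} there is only one free orbit and $(w_1,w_2)$ belongs to it, so $(r_1,r_2)$ and $(w_1,w_2)$ lie in the same orbit. Hence $A(r_1,r_2)\neq\emptyset$ and $\mathrm{Y}_{w_1,w_2}(r_1,r_2)=\pm1\neq 0$.

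I do not anticipate a genuine obstacle: the only points needing care are the coset/stabilizer bookkeeping in the first two steps and the remark that $A(r_1,r_2)$ being nonempty forces the two arrays to share a column multiset. Everything else is the elementary vanishing of an alternating sum over a product of symmetric groups together with the definition of complementary words.
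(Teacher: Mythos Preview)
Your proof is correct and follows essentially the same reasoning as the paper's. The only difference is organizational: the paper works directly with the stabilizer of the target pair $(r_1,r_2)$ (a transposition swapping two identical columns of $r_2$-over-$r_1$ gives a sign-reversing involution on the summation set, and distinct columns force at most one term), whereas you recast the summation set as a coset of the stabilizer $H$ of the source pair $(w_1,w_2)$ and evaluate the alternating sum over $H\cong\prod_c S_{m_c}$. These are two sides of the same coin, and your extra bookkeeping (the column-multiset transfer when $A\neq\emptyset$) makes the case analysis slightly more explicit than the paper's terse version.
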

\begin{proof}
If there is a repeated column, then flipping those columns does not change the value of $\mathrm{Y}$ (since the inputs are the same), but it also introduces a sign change (since flipping two columns is an odd permutation).  It follows that $\mathrm{Y} = 0$ in this case.  If all the columns are distinct, then there is at most one permutation carrying each row back to $w_i$, and so the sum either is empty or has a single term.  In the event that $w_1$ and $w_2$ are complementary, the sum is nonempty.
\qed
\end{proof}

\begin{definition}\label{SpechtMatrix}
If $w_1, w_2$ are complementary words of length $n$, the \emph{Specht matrix} $\varphi(w_1,w_2)$ is the 
$$
\{ \mbox{ rearrangements of $w_1$ } \} \times \{ \mbox{ rearrangements of $w_2$ } \}
$$
matrix with $(r_1, r_2)$-entry $\mathrm{Y}_{w_1,w_2}(r_1, r_2)$.  If $w_1$ and $w_2$ are not complementary but have complementary rearrangements, we choose complementary rearrangements $w_1^\prime$ and $w_2^\prime$ of $w_1$ and $w_2$ respectively and define the Specht matrix $\varphi(w_1,w_2)$ to be $\varphi(w_1^\prime,w_2^\prime)$.  The column-span of the Specht matrix (as a subspace of $V(w_1)$) is the \emph{Specht module} $V(w_1,w_2)$.
\end{definition}

Note that, in the case where $w_1$ and $w_2$ are not themselves complementary, the Specht matrix $\varphi(w_1,w_2)$ is only defined up to a global choice of sign (depending on which complementary rearrangements are chosen), but the Specht module is the same regardless of this choice.

The symmetric group acts on $V(w_1, w_2)$ by permuting the rearrangements of $w_1$.

\begin{example}\label{action_on_(2,2)}
Let $w_1=1122$ and $w_2=1212$.  Then the Specht matrix $\varphi(1122, 1212)$ is shown in Table~\ref{table:specht-matrix22}.

\begin{table}[!htb]
\normalsize
\begin{center}
$\begin{array}{c|cccccc} \label{SpechtMatrix2+2_1}
        & 1122 & 1212 & 1221 & 2112 & 2211 & 2121 \\ \hline
 1212  & 1 & 0 & -1 & -1 & 1 & 0 \\
 1122  & 0 & -1 & 1 & 1 & 0 & -1 \\
 1221  & -1 & 1 & 0 & 0 & -1 & 1 \\
 2121  & 1 & 0 & -1 & -1 & 1 & 0 \\
 2211  & 0 & -1 & 1 & 1 & 0 & -1 \\
 2112  & -1 & 1 & 0 & 0 & -1 & 1. \\
\end{array}$
\caption{Specht matrix $\varphi(1122,1212)$}
\label{table:specht-matrix22}
\end{center}
\end{table}
We will describe now the action of $S_n$ on the rows of this Specht matrix. Consider the action of
$\sigma = (123)$ on the row word $w_1$. This action changes the order of rows to $2112$, $1212$, $2211$, $1221$, $2121$ and $1122$. What effect does it have on the columns of the Specht matrix? Let us look, for example, at the first column, labelled by $1122$. With the new row order this column becomes $c = (-1,1,0,-1,1,0)$, which is the original column for $r_2=2112$. If we consider also the action of $\sigma$ on the rearrangements of $w_2$, we see that the rearrangement $r_2=2112$ of $w_2$ becomes $\sigma\cdot2112 = 1122$, which is the label of the first column. The permutation $\sigma$ acts this way on the Specht module. 
\end{example}

We have the following fundamental fact about this representation.

\begin{theorem} \label{theorem:SpechtIrred}
The Specht module $V(w_1,w_2)$ is an irreducible representation of $S_n$.
\end{theorem}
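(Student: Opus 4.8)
The plan is to deduce Theorem~\ref{theorem:SpechtIrred} directly from the machinery already assembled, rather than re-proving irreducibility from scratch. First I would reconcile the two descriptions of the Specht module: Definition~\ref{SpechtMatrix} builds $V(w_1,w_2)$ as the column-span of the Specht matrix $\varphi(w_1,w_2)$ inside $V(w_1)$, whose $(r_1,r_2)$-entry is Young's character $\mathrm{Y}_{w_1,w_2}(r_1,r_2)$. The first step is to observe that this matrix is exactly (a matrix representative of) the map $\varphi\colon V(w_2)\otimes\varepsilon\to V(w_1)$ produced in Corollary~\ref{corollary:irrep}: the column indexed by a rearrangement $r_2$ is the image of the corresponding basis vector of $V(w_2)\otimes\varepsilon$, and by Proposition~\ref{proposition:young} its entries lie in $\{-1,0,1\}$ with the expected support. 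One needs to check that this assignment really is a map of $S_n$-representations --- that is, that $\mathrm{Y}_{w_1,w_2}(\sigma r_1,\sigma r_2)=\mathrm{Y}_{w_1,w_2}(r_1,r_2)$ and that the sign twist on the $V(w_2)$-side is accounted for by the sign appearing in the definition of $\mathrm{Y}$ --- which is a short computation reindexing the sum over $\sigma$ defining Young's character; Example~\ref{action_on_(2,2)} is essentially a worked instance of this compatibility.

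Once $\varphi(w_1,w_2)$ is identified with the map $\varphi$ of Corollary~\ref{corollary:irrep}, we are done: the Specht module $V(w_1,w_2)$ is by definition the column-span of this matrix, i.e.\ the image $\operatorname{im}\varphi$, and Corollary~\ref{corollary:irrep} already asserts that $\operatorname{im}\varphi$ is an irreducible representation of $S_n$. The only point requiring care in the ``not themselves complementary'' case of Definition~\ref{SpechtMatrix} is that replacing $(w_1,w_2)$ by complementary rearrangements $(w_1',w_2')$ does not change the subspace: since $w_1'$ is a rearrangement of $w_1$ there is a $\tau\in S_n$ with $\tau\cdot w_1=w_1'$, and acting by $\tau$ is an $S_n$-equivariant isomorphism $V(w_1)\to V(w_1)$ carrying $\operatorname{im}\varphi(w_1',w_2')$ onto $\operatorname{im}\varphi(w_1,w_2)$ --- hence irreducibility transfers. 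It then suffices to invoke Corollary~\ref{corollary:irrep} for the complementary pair.

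The main obstacle is therefore not irreducibility itself but the bookkeeping in the first step: making airtight the claim that the concrete matrix of Young's characters is a legitimate, nonzero $S_n$-equivariant map $V(w_2)\otimes\varepsilon\to V(w_1)$ and that it is (up to scalar) \emph{the} map of Corollary~\ref{corollary:irrep}. Nonzero-ness is immediate from Proposition~\ref{proposition:young} (some column is nonzero when the words are complementary); equivariance is the reindexing computation above; and uniqueness-up-to-scaling is exactly the content of Corollary~\ref{corollary:irrep}, so no new hard work is needed. I would present the proof as: (i) the Specht matrix represents a map of $S_n$-representations $V(w_2)\otimes\varepsilon\to V(w_1)$; (ii) it is nonzero, hence a scalar multiple of $\varphi$; (iii) therefore its image is $\operatorname{im}\varphi$, which is irreducible by Corollary~\ref{corollary:irrep}; (iv) the non-complementary case reduces to the complementary one by equivariance of the $S_n$-action.
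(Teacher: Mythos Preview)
Your proposal is correct and follows essentially the same route as the paper's proof: both reduce the theorem to Corollary~\ref{corollary:irrep} by identifying the Specht matrix with (a nonzero scalar multiple of) the map $\varphi$, invoking Proposition~\ref{proposition:young} for nonvanishing. One small correction: the equivariance identity you actually need is $\mathrm{Y}(\sigma r_1,\sigma r_2)=\mathrm{sign}(\sigma)\cdot\mathrm{Y}(r_1,r_2)$, not plain equality---this sign is exactly what makes the matrix represent a map out of $V(w_2)\otimes\varepsilon$ rather than out of $V(w_2)$, so the two points you separate into ``the identity'' and ``the sign twist'' are really one computation.
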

\begin{proof}
By the proof of Corollary~\ref{corollary:irrep}, we see that having a unique free orbit gives a unique-up-to-scaling map $\varphi \colon V(w_2)\otimes \varepsilon \to V(w_1)$ whose image is irreducible.  It remains only to show that the Specht matrix provides an explicit choice for $\varphi$.  This was accomplished in Proposition~\ref{proposition:young}, which shows that $\mathrm{Y}$ provides a map 
$$
\varepsilon \to V(w_1) \otimes V(w_2)^\vee,
$$
where we have used the fact that the action of $S_n$ on $V(w_2)$ and $V(w_2)^\vee$ are canonically equal.
\qed
\end{proof}

The representation does not actually depend on the words but only on the partition diagram, so we make the following definition:
\begin{definition}\label{SpechtModule}
If $\lambda$ is a partition of $n$, then the \emph{Specht module} $V(\lambda)$ is the Specht module $V(w_1,w_2)$ for any choice of complementary $w_1$ and $w_2$ such that the diagram showing $w_1$ and $w_2$ are complementary is $D(\lambda)$.  We call $w_1$ the \emph{row word} and $w_2$ the \emph{column word}.
\end{definition}

For example, the matrix in Example~\ref{action_on_(2,2)} is the Specht matrix $V(2,2)$.

\begin{remark}
  Since every entry of the Specht matrix is a $0$, $1$, or $-1$, the Specht
  module can be similarly defined over any field.  However, over a
  field of positive characteristic, Maschke's Theorem does not hold.
  Nevertheless, over any field of characteristic other than $2$, the
  statements above show that the Specht module is \emph{indecomposable}, meaning that $V(w_1,w_2)$ cannot be written as
  the direct sum of two subrepresentations.
\end{remark}

Note that, if $w_1$ and $w_2$ are complementary with associated
diagram $D$, then $w_2$ and $w_1$ are also complementary, with an
associated diagram which is the transpose of $D$.  It will be useful
to have a definition describing this relationship.

\begin{definition}
Given a partition $\lambda$, the \emph{conjugate partition} $\lambda^*$ is the partition whose diagram $D(\lambda^*)$ is the transpose of the diagram $D(\lambda)$.  Formally, we have
$$\lambda^*_i = \#\{k\mid \lambda_k\geq i\}.$$
\end{definition}
\noindent
For example, if $\lambda = (4, 3, 1, 1)$, then $\lambda^*= (4, 2, 2, 1)$.  This natural combinatorial construction has representation-theoretic meaning, as the next result shows.

\begin{theorem} \label{theorem:transposes} We have an isomorphism of
  $S_n$ representations $V(\lambda^*)\simeq V(\lambda)^{\vee} \otimes
  \varepsilon$.
\end{theorem}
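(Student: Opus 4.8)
The plan is to exhibit an explicit isomorphism using the Specht matrices, exploiting the observation already recorded in the excerpt: if $w_1$ and $w_2$ are complementary with diagram $D(\lambda)$, then $w_2$ and $w_1$ are complementary with diagram $D(\lambda^*)$. So fix complementary words $w_1$ (row word) and $w_2$ (column word) realizing $D(\lambda)$; then $(w_2, w_1)$ realizes $D(\lambda^*)$, and $V(\lambda^*)$ is by definition the column span of the Specht matrix $\varphi(w_2, w_1)$ inside $V(w_2)$.

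The key point is that the Specht matrix $\varphi(w_2,w_1)$ is, up to relabeling, the \emph{transpose} of $\varphi(w_1,w_2)$: by Definition~\ref{definition:YoungsChar}, $\mathrm{Y}_{w_2,w_1}(r_2,r_1) = \sum_{\sigma: \sigma w_2 = r_2,\ \sigma w_1 = r_1} \mathrm{sign}(\sigma) = \mathrm{Y}_{w_1,w_2}(r_1,r_2)$, since the indexing set of permutations is literally the same. Now recall from the proof of Theorem~\ref{theorem:SpechtIrred} that $\mathrm{Y}$ is the same data as an $S_n$-equivariant map $\varepsilon \to V(w_1)\otimes V(w_2)^\vee$, i.e. (using $V(w_2)\simeq V(w_2)^\vee$) as a map $V(w_2)\otimes\varepsilon \to V(w_1)$ whose image is $V(\lambda)$. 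Dualizing and tensoring with $\varepsilon$ (using $\varepsilon\otimes\varepsilon\simeq\mathbb{C}$ and $V(w_1)\simeq V(w_1)^\vee$), the transpose matrix gives the equivariant map $V(w_1)\otimes\varepsilon \to V(w_2)$ whose image is $V(\lambda^*)$. Concretely: $\varphi(w_1,w_2)$ as a matrix has column span $V(\lambda)\subseteq V(w_1)$ and row span $V(\lambda^*)\subseteq V(w_2)$ (after identifying $V(w_2)$ with $V(w_2)^\vee$), and column span and row span of a matrix have the same dimension and are related by transpose-dual; combined with the equivariance bookkeeping this yields $V(\lambda^*) \simeq (V(\lambda))^\vee \otimes \varepsilon$.

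Carrying this out in order: first, set up the two Specht matrices and verify $\varphi(w_2,w_1) = \varphi(w_1,w_2)^{T}$ as matrices indexed by (rearrangements of $w_2$) $\times$ (rearrangements of $w_1$). Second, recall the canonical isomorphisms $V(w)\simeq V(w)^\vee$ (the Lemma in the excerpt) and package the Specht matrix $\varphi(w_1,w_2)$ as the equivariant map $\Phi\colon V(w_2)\otimes\varepsilon \to V(w_1)$ with image $V(\lambda)$, exactly as in the proofs of Corollary~\ref{corollary:irrep} and Theorem~\ref{theorem:SpechtIrred}. Third, form the dual map $\Phi^\vee\colon V(w_1)^\vee \to (V(w_2)\otimes\varepsilon)^\vee \simeq V(w_2)^\vee\otimes\varepsilon$; since the matrix of $\Phi^\vee$ in the dual bases is the transpose of the matrix of $\Phi$, and that transpose is $\varphi(w_2,w_1)$ up to sign, the image of $\Phi^\vee$ is $V(\lambda^*)\otimes\varepsilon$ (again identifying $V(w_i)$ with its dual). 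Fourth, observe that $\Phi$ is surjective onto its image $V(\lambda)$, hence $\Phi^\vee$ restricted to $V(\lambda)^\vee$ is injective with the same image as $\Phi^\vee$ itself, giving $V(\lambda)^\vee \simeq V(\lambda^*)\otimes\varepsilon$ as $S_n$-representations; tensoring both sides with $\varepsilon$ (and using $\varepsilon\otimes\varepsilon\simeq\mathbb{C}$) gives the stated $V(\lambda^*)\simeq V(\lambda)^\vee\otimes\varepsilon$.

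The main obstacle is purely the bookkeeping of equivariance through the chain of canonical identifications $V(w)\simeq V(w)^\vee$, $(V\otimes\varepsilon)^\vee \simeq V^\vee\otimes\varepsilon$, and $\varepsilon^\vee\simeq\varepsilon$ — one must check that, under the basis of rearrangements and its dual basis, the matrix of $\Phi^\vee$ really is the transpose Specht matrix and that the sign-twist $\varepsilon$ lands on the correct tensor factor (here is where the $\otimes\varepsilon$ on $V(w_2)$ versus $V(w_1)$ matters, and it is the reason $\varepsilon$ survives in the final statement). There is also the minor subtlety that $\varphi(w_1,w_2)$ is only defined up to an overall sign when $w_1,w_2$ are not themselves complementary, but this does not affect row or column spans, so the isomorphism is well-defined on the level of Specht modules. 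Once the identifications are pinned down, the argument is essentially the elementary linear-algebra fact that a matrix and its transpose have row span and column span of equal dimension, upgraded to an $S_n$-equivariant statement.
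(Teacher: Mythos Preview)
Your proposal is correct and takes essentially the same approach as the paper: both arguments rest on the observation that $\varphi(w_2,w_1)$ is the transpose of $\varphi(w_1,w_2)$ and then track how the $S_n$-action changes under transposition to produce the dual and the sign twist. The paper's version is more compressed, replacing your categorical dual-map bookkeeping with the single explicit identity $\mathrm{Y}(w_2,\sigma w_1) = (-1)^\sigma\,\mathrm{Y}(\sigma^{-1}w_2,w_1)$, from which the appearance of both $(\,\cdot\,)^\vee$ and $\varepsilon$ is read off directly.
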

\begin{proof}
Observe that transposing the Specht matrix $\phi(w_1, w_2)$ gives the Specht matrix $\phi(w_2, w_1)$, which is the Specht matrix for the conjugate partition.  After transposing, however, the symmetric group acts on $\phi(w_2, w_1)$ by rearranging the \emph{column word} $w_1$.  This is not the correct action of the symmetric group on the Specht matrix.  However, the action is off only by a sign and a dual because, by the identity
$$
\mathrm{Y}(w_2, \sigma w_1) = (-1)^\sigma \cdot \mathrm{Y}(\sigma^{-1} w_2, w_1),
$$
the symmetric group acts on the row word $w_2$ by $\sigma^{-1}$ and picks up a sign with odd permutations.  
\qed
\end{proof}
\begin{remark}
We have not needed it, but it is actually the case that Specht modules are self-dual in the sense that there is an abstract isomorphism $V(\lambda)^{\vee} \simeq V(\lambda)$.  Choosing a basis from the columns of the Specht matrix, it would be possible to write matrices for the action of the symmetric group.  Evidently, these matrices would contain only real numbers---in fact, only rational numbers---and so their traces would be real as well.  It follows that the character of a Specht module is real, and so its dual, whose character is given by complex conjugation, is the same.  With this fact in mind, Theorem~\ref{theorem:transposes} gives that $V(\lambda^*)\cong V(\lambda) \otimes \varepsilon$.
\end{remark}

We remark briefly on the relationship between the construction above and the more usual presentation found, for example, in James and Kerber~\cite[Chapter 7.1]{JK} or Sagan~\cite[Chapter 2.3]{Sagan}.  In the usual construction, one defines a \emph{column-strict filling} of $\lambda$ to be a labelling of $D(\lambda)$ by the integers $\{1,\ldots,n\}$ such that every column is strictly increasing.  Then, for each column-strict filling $T$, one associates an element $v_T$ in an abstractly defined vector space, and the Specht module is the span of the vectors $v_T$ as one takes all possible fillings $T$.  In the definition of Specht module $V(\lambda)$ used here (Definition~\ref{SpechtModule}), we start with a word $w_2$, which we can take to be $w_2=1^{\mu_1}2^{\mu_2}\cdots k^{\mu_k}$, where $\mu=\lambda^*$ and $k=\lambda_1$.  Then each rearrangement $r$ of the word $w_2$ gives a column of the corresponding Specht matrix, which we can interpret as a vector $v_r$, and $V(\lambda)$ is defined as the span of the vectors $v_r$ as one takes all possible rearrangements $r$.  For each rearrangement $r$ of $w_2$, one can define an associated filling $T_r$: the one where the labels in column $i$ are the positions of the appearances of $i$ in $r$. For example, if $\lambda=(4,3,1,1)$, so $w_2={111122334}$, and we take $r = {131243112}$ (or $r=\mbox{ESENTSEEN}$ if we let $w_2=\mbox{TENNESSEE}$), then $T_r$ is
$$
T_r = \begin{ytableau}
1 & 4 & 2 & 5 \cr
3 & 9 & 6 \cr
7 \cr
8
\end{ytableau}.
$$

This correspondence between column-strict fillings and rearrangements essentially gives the correspondence between our version and the usual version.  Our version, however, sometimes differs by a sign (when the minimal rearrangement of $w_2$ to $r$ is by an odd permutation).  This sign turns out to be useful; for example, Theorem~\ref{theorem:allvertices} would be harder to state otherwise.

\section{The Specht Modules are a Complete Set of Irreducible Representations}

The Specht modules $V(\lambda)$ as $\lambda$ varies over all partitions of $n$ form a complete set of finite-dimensional irreducible representations for $S_n$.  The usual proof, found for example in~\cite[Section 2.4]{Sagan}, shows that $V(\lambda)\not\cong V(\mu)$ for $\lambda\neq\mu$; hence, since there are as many conjugacy classes of $S_n$ as there are partitions of $n$, we have all the irreducible representations.  We give a new and different argument here that the Specht modules are a complete set of irreducibles assuming an unproven combinatorial conjecture.

\begin{definition}\label{Definition:hook}
In a diagram $D$, the \emph{hook} of a box $d \in D$ consists of the box $d$, all boxes directly to the right of $d$, and all boxes directly below $d$.  In other words, if $d=(i,j)\in D$, the hook of $d$ is the set of all $(a,b)\in D$ such that $a\geq i$ and $b=j$ or $a=i$ and $b\geq j$.  The \emph{hook length} of a box $\Gamma(d)$ is the number of boxes in its hook.
\end{definition}
\begin{figure}[!ht] 
\centering
\includegraphics{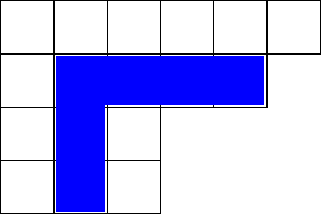}
\caption{A hook with $6$ boxes inside the diagram $D(\lambda)$ for $\lambda = (6,5,3,3)$}
\label{figure:hook}
\end{figure}
\begin{definition}
The \emph{dimension} of a diagram $D$ with $n$ boxes is defined as
$$
\dim D = \frac{n!}{\prod_{d \in D} \Gamma(d)}.
$$
\end{definition}
By a beautiful result of Frame, Robinson, and Thrall~\cite{FRT}, the dimension of a diagram equals the number of standard Young tableaux, which is the dimension of its Specht module.  A bijective proofs of this result was later given by Novelli, Pak, and Stoyanovskii~\cite{NPS}.  Consequently, $\dim D$ is always an integer.

An \emph{ordered set partition} of a finite set $S$ is a sequence $(P_1,\ldots,P_\ell)$ of subsets of $S$ such that each $P_i$ is nonempty, $P_i\cap P_j=\emptyset$ for all $i\neq j$, and $\bigcup_i P_i = S$.  An ordered set partition is \emph{properly ordered} if the parts are nonincreasing in size, so $\# P_i \geq \# P_{i+1}$ for all $i$.  For each properly ordered set partition $P=(P_1,\ldots,P_\ell)$ of the set $\{1, \ldots, n\}$, pick a word $w_P$ of length $n$ so that the $i$-th and $j$-th letters of $w_P$ match if and only if $i$ and $j$ are in the same $P_k$.  For each $P$, choose also a word $w^\prime_P$ so that $w_P$ and $w^\prime_P$ are complementary.  Every properly ordered set partition $P$ gives rise to an underlying partition $\lambda(P)$ with $\lambda(P)_i=\#P_i$.  Note that a set partition with parts of distinct sizes will have only one proper ordering, but a set partition with parts of equal sizes will have more.  Let $d(P)=\dim D(\lambda(P))$.

While searching for a Specht matrix proof that every irreducible representation of the symmetric group is isomorphic to some Specht module, we were led to the following conjecture, which has been checked for $n \leq 5$.

\begin{conjecture} \label{conjecture:funnysum}
If $\sigma, \tau \in S_n$ are two permutations, then Young's character in Definition~\ref{definition:YoungsChar} satisfies
$$
\sum_{P} \sum_{r} d(P)^2 \mathrm{Y}(\sigma w_P, r) \mathrm{Y}(\tau w_P, r) =  \left\{
     \begin{array}{lr}
       (n!)^2 & \mbox{  if } \sigma = \tau \phantom{,}\\
       0 & \mbox{  if } \sigma \neq \tau,
     \end{array}
   \right.
$$
where the first sum is over all properly ordered set partitions of $\{1, \ldots, n\}$, and the second sum is over all rearrangements of the word $w^\prime_P$.
\end{conjecture}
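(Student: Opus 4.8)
The plan is to reduce the doubly-indexed sum on the left-hand side to the decomposition of the regular representation of $S_n$. Fix a properly ordered set partition $P$, put $\lambda=\lambda(P)$, write $\Phi_P=\varphi(w_P,w'_P)$ for the associated Specht matrix (Definition~\ref{SpechtMatrix}), and let $H_P=\operatorname{Stab}(w_P)$ be the Young subgroup fixing $w_P$, so that $V(w_P)\cong\mathbb{C}[S_n/H_P]$ with the permutation inner product in which the rearrangements form an orthonormal basis. The first step is the identity
$$
\sum_{r}\mathrm{Y}(\sigma w_P,r)\,\mathrm{Y}(\tau w_P,r)=\bigl(\Phi_P\Phi_P^{\top}\bigr)_{\sigma w_P,\,\tau w_P},
$$
the sum running over rearrangements $r$ of $w'_P$. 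By Corollary~\ref{corollary:irrep} and Theorem~\ref{theorem:SpechtIrred}, $\Phi_P$ is an $S_n$-map $V(w'_P)\otimes\varepsilon\to V(w_P)$, so its transpose $\Phi_P^{\top}$ is an $S_n$-map $V(w_P)\to V(w'_P)\otimes\varepsilon$, and $\Phi_P\Phi_P^{\top}$ is a self-adjoint, $S_n$-equivariant endomorphism of $V(w_P)$ whose image is the Specht submodule $V(\lambda)$ and whose kernel is therefore $V(\lambda)^{\perp}$. Since $V(\lambda)$ is irreducible, Schur's lemma forces $\Phi_P\Phi_P^{\top}=c_P\,\Pi_P$, where $\Pi_P$ is the orthogonal projection of $V(w_P)$ onto $V(\lambda)$ and $c_P>0$. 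A trace computation determines $c_P$: by Proposition~\ref{proposition:young}, $\operatorname{tr}(\Phi_P\Phi_P^{\top})=\sum_{r_1,r_2}\mathrm{Y}(r_1,r_2)^2$ counts the pairs of rearrangements with all columns distinct, i.e. the cardinality $n!$ of the unique free orbit, while $\operatorname{tr}(\Pi_P)=\dim V(\lambda)=d(P)$; hence $c_P=n!/d(P)$.

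\textbf{From projections to characters.} Next I would express $(\Pi_P)_{\sigma w_P,\,\tau w_P}$ in terms of the character $\chi_\lambda$ of $V(\lambda)$. The space of $H_P$-invariants in $V(\lambda)$ is one-dimensional --- this is the statement that the Kostka number $K_{\lambda\lambda}$ equals $1$ --- so, with respect to an $S_n$-invariant inner product, $\sum_{h\in H_P}h$ acts on $V(\lambda)$ as $|H_P|\,u_P u_P^{*}$ for a unit invariant vector $u_P$. Transporting this across the quotient map $\mathbb{C}[S_n]\twoheadrightarrow V(w_P)$ and reading off coefficients yields
$$
(\Pi_P)_{\sigma w_P,\,\tau w_P}=\frac{d(P)}{n!}\,|H_P|\,\bigl\langle u_P,\ (\sigma^{-1}\tau)\cdot u_P\bigr\rangle,
$$
so, combined with the previous step, the sum over $r$ in Conjecture~\ref{conjecture:funnysum} equals $|H_P|\,\langle u_P,(\sigma^{-1}\tau)\cdot u_P\rangle$. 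Now sum over the set partitions $P$ with $\lambda(P)=\lambda$ held fixed: $S_n$ acts transitively on this set with point stabilizer $H_P$, and $\rho\cdot u_P$ is a unit $H_{\rho P}$-invariant vector, hence equals $u_{\rho P}$ up to a unimodular scalar; so $\sum_{\lambda(P)=\lambda}u_Pu_P^{*}$ is conjugation-invariant, and Schur's lemma makes it the scalar operator $\tfrac{N_\lambda}{d(\lambda)}\mathrm{id}$, where $d(\lambda)=\dim V(\lambda)$ and $N_\lambda=n!/|H_\lambda|$ is the number of such $P$. Taking the trace against $\sigma^{-1}\tau$ gives $\sum_{\lambda(P)=\lambda}\langle u_P,(\sigma^{-1}\tau)\cdot u_P\rangle=\tfrac{N_\lambda}{d(\lambda)}\chi_\lambda(\sigma^{-1}\tau)$, and the whole sum collapses:
$$
\sum_{P}\sum_{r}d(P)^{2}\,\mathrm{Y}(\sigma w_P,r)\,\mathrm{Y}(\tau w_P,r)=\sum_{\lambda}d(\lambda)^{2}\,|H_\lambda|\,\frac{N_\lambda}{d(\lambda)}\,\chi_\lambda(\sigma^{-1}\tau)=n!\sum_{\lambda}d(\lambda)\,\chi_\lambda(\sigma^{-1}\tau),
$$
using $|H_\lambda|\,N_\lambda=n!$. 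Finally, because the Specht modules $V(\lambda)$ form a complete set of irreducible representations of $S_n$, the class function $\sum_\lambda d(\lambda)\,\chi_\lambda$ is the character of the regular representation, equal to $n!$ at the identity and $0$ elsewhere; this gives $(n!)^2$ when $\sigma=\tau$ and $0$ when $\sigma\neq\tau$.

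\textbf{The main obstacle.} The step I expect to be the real difficulty is precisely this last line. Read as a proof of Conjecture~\ref{conjecture:funnysum}, the argument is complete and correct, but it invokes the completeness of the Specht modules --- exactly the statement the conjecture was introduced to help prove. Indeed, the reduction above shows the conjecture is equivalent to $\bigoplus_\lambda V(\lambda)^{\oplus d(\lambda)}\cong\mathbb{C}[S_n]$, i.e.\ to completeness (and, as a bonus, its value at $\sigma=\tau$ then recovers the Frame--Robinson--Thrall identity $\sum_\lambda d(\lambda)^2=n!$). A proof that serves the purpose announced in the paper must therefore establish the conjectured sum \emph{directly}. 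Expanding each $\mathrm{Y}$ as a signed count of permutations rewrites the left-hand side as
$$
\sum_{P}d(P)^{2}\sum_{h'\in H'_P}\operatorname{sign}(h')\,\#\{\rho\in\sigma H_P:\rho h'\in\tau H_P\},\qquad H'_P=\operatorname{Stab}(w'_P),
$$
and the task becomes to construct a sign-reversing involution on these weighted configurations that cancels everything when $\sigma\neq\tau$ and organizes the survivors into $(n!)^2$ when $\sigma=\tau$, all while tracking how the multiplicities $d(P)^2$ behave. Producing and verifying such an involution is, I believe, the hard combinatorial heart of the problem, and is where the bulk of the work would lie.
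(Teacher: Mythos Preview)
The paper does not prove this statement: it is posed as an open conjecture, checked only computationally for $n\le 5$, and the authors explicitly say they want a proof that does \emph{not} invoke Theorem~\ref{theorem:complete} (completeness of the Specht modules), precisely because the conjecture is meant to serve as the input to that theorem.

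Your argument is correct as a proof of the identity, and you have accurately diagnosed its status. In fact your reduction sharpens what the paper records: combined with Theorem~\ref{theorem:complete} (conjecture $\Rightarrow$ completeness), your computation supplies the converse, so the conjecture is \emph{equivalent} to completeness of the Specht modules. One remark on the intermediate steps: the fact $\dim V(\lambda)^{H_P}=1$ that you use does not itself require completeness---it already follows from Corollary~\ref{corollary:irrep}, since $\dim\Hom_{S_n}(V(w'_P)\otimes\varepsilon,\,V(w_P))=1$ together with the appearance of the irreducible $V(\lambda)$ in both source and target forces its multiplicity in each to be exactly one. So the only genuinely circular step is the final appeal to $\sum_\lambda d(\lambda)\chi_\lambda=\chi_{\mathrm{reg}}$, exactly as you say; a proof meeting the authors' stated aim would need a direct combinatorial cancellation in the signed sum you wrote at the end, and that remains the open problem.
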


Note that, for our application, we desire a proof of the conjecture that does {\em not} make use of the following theorem.

\begin{theorem} \label{theorem:complete}
Every irreducible representation of $S_n$ arises exactly once from a diagram with $n$ boxes.  
\end{theorem}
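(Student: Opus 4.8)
The plan is to derive this statement from Conjecture~\ref{conjecture:funnysum}; the classical route, which instead shows directly that the $V(\lambda)$ are pairwise non-isomorphic and then counts conjugacy classes, is recalled at the end. Since Theorem~\ref{theorem:SpechtIrred} already gives that each $V(\lambda)$ is irreducible, and since the partitions of $n$ are equinumerous with the conjugacy classes of $S_n$ and hence with the isomorphism classes of irreducible representations, it suffices to prove \emph{surjectivity}: every irreducible representation of $S_n$ is isomorphic to $V(\lambda)$ for some partition $\lambda$ of $n$. The map $\lambda\mapsto[V(\lambda)]$ is then a surjection between two finite sets of the same cardinality, hence a bijection, which is exactly the ``exactly once'' of the statement.

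The first real step is to reinterpret Conjecture~\ref{conjecture:funnysum}. Fix a properly ordered set partition $P$ and write $M_P=\varphi(w_P,w'_P)$ for the Specht matrix, with rows indexed by rearrangements of $w_P$ and columns by rearrangements of $w'_P$. Equipping $V(w_P)$ and $V(w'_P)$ with the inner products in which the rearrangement bases are orthonormal, the matrix $M_P$ represents the $S_n$-equivariant map $V(w'_P)\otimes\varepsilon\to V(w_P)$ of Theorem~\ref{theorem:SpechtIrred}, and its transpose $M_P^T$ represents the adjoint of that map (using the canonical self-dualities $V(w_P)\simeq V(w_P)^\vee$ and $V(w'_P)\simeq V(w'_P)^\vee$). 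Hence $M_PM_P^T$ is a self-adjoint, positive semidefinite, $S_n$-equivariant endomorphism of $V(w_P)$, and a direct expansion shows that its $(\sigma w_P,\tau w_P)$-entry is precisely the inner sum $\sum_r \mathrm{Y}(\sigma w_P,r)\,\mathrm{Y}(\tau w_P,r)$ appearing in the conjecture. Now $\operatorname{im}(M_PM_P^T)$ is the column span of $M_P$, namely the Specht module $V(\lambda(P))$, which is irreducible; being self-adjoint, $M_PM_P^T$ has kernel $V(\lambda(P))^{\perp}$, and by Schur's lemma it acts on $V(\lambda(P))$ by a scalar $c_P$, so $M_PM_P^T=c_P\,\Pi_{\lambda(P)}$, where $\Pi_{\lambda(P)}$ is the orthogonal projection of $V(w_P)$ onto $V(\lambda(P))$. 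To compute $c_P$, note that $\operatorname{tr}(M_PM_P^T)$ equals the number of pairs of rearrangements with all columns distinct, which by Proposition~\ref{proposition:young} and Theorem~\ref{theorem:diagrams} is the size $n!$ of the unique free $S_n$-orbit, while $\operatorname{tr}(c_P\,\Pi_{\lambda(P)})=c_P\dim V(\lambda(P))=c_P\,d(P)$; hence $c_P=n!/d(P)$.

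Substituting $M_PM_P^T=(n!/d(P))\,\Pi_{\lambda(P)}$, Conjecture~\ref{conjecture:funnysum} becomes the statement that the $n!$ vectors $\xi_\sigma:=\bigoplus_P\sqrt{d(P)}\,\Pi_{\lambda(P)}(\sigma\cdot e_{w_P})$ inside $\bigoplus_P V(\lambda(P))$ satisfy $\langle\xi_\sigma,\xi_\tau\rangle=n!\,\delta_{\sigma\tau}$; in particular they are orthogonal and nonzero, hence linearly independent. Therefore the $S_n$-equivariant map $\Theta\colon\mathbb{C}[S_n]\to\bigoplus_P V(\lambda(P))$, $e_\sigma\mapsto\xi_\sigma$ (equivariant for the left regular action because each $\Pi_{\lambda(P)}$ is), is injective. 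Since the regular representation $\mathbb{C}[S_n]$ contains a copy of every irreducible representation of $S_n$, each irreducible $W$ embeds via $\Theta$ into $\bigoplus_P V(\lambda(P))$; projecting onto the summands, $W$ maps nonzero into some $V(\lambda(P))$, and a nonzero map of irreducibles is an isomorphism, so $W\simeq V(\lambda(P))$. This gives the surjectivity required above, and the counting argument of the first paragraph finishes the proof.

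The genuine obstacle is Conjecture~\ref{conjecture:funnysum} itself, which is open (and which, as remarked, we want established without invoking this theorem); the argument above only reduces Theorem~\ref{theorem:complete} to it. Within that reduction the one delicate point is the verification that $M_PM_P^T$ is an \emph{equivariant, self-adjoint} operator on $V(w_P)$ — this requires tracking the sign representation $\varepsilon$ and the self-dualities $V(w_P)\simeq V(w_P)^\vee$, $V(w'_P)\simeq V(w'_P)^\vee$ correctly — together with the trace computation yielding $c_P=n!/d(P)$. For an unconditional proof one can instead cite the classical fact that the $V(\lambda)$ are pairwise non-isomorphic (see \cite[Section~2.4]{Sagan}), after which the equality of the number of partitions of $n$ with the number of conjugacy classes of $S_n$ gives completeness, hence ``exactly once.''
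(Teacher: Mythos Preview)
Your argument is correct (conditional on Conjecture~\ref{conjecture:funnysum}, as is the paper's) and follows the same overall strategy as the paper: use the conjecture to obtain a full-rank map between $\mathbb{C}[S_n]$ and a direct sum of Specht modules, deduce that every irreducible appears among the $V(\lambda)$, and finish by counting partitions versus conjugacy classes.

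The packaging, however, is different. The paper reads the conjecture \emph{directly} as an orthogonality statement for the rows of a single block matrix $M$ whose rows are indexed by $S_n$ and whose block column for $P$ has $(\sigma,r)$-entry $\mathrm{Y}(\sigma w_P,r)$; orthogonality in the $d(P)^2$-weighted inner product gives full row rank, so the column span of $M$ is all of $\mathbb{C}[S_n]$, and since each block's column span is a Specht module, the regular representation is spanned by Spechts. No Schur, no projections, no adjoints. You instead insert an intermediate computation: you show $M_PM_P^T=(n!/d(P))\,\Pi_{\lambda(P)}$ via Schur's lemma and a trace count, then rewrite the conjecture as orthogonality of the vectors $\xi_\sigma$ in $\bigoplus_P V(\lambda(P))$, obtaining an \emph{injective} equivariant map $\mathbb{C}[S_n]\hookrightarrow\bigoplus_P V(\lambda(P))$. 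This is the dual viewpoint of the same linear-algebra fact. What your extra step buys is an explanation of the mysterious weight $d(P)^2$ in the conjecture (it is exactly what is needed to turn $M_PM_P^T$ into $d(P)\cdot n!\cdot\Pi_{\lambda(P)}$); what the paper's route buys is brevity and the avoidance of any appeal to Schur's lemma or the unitary structure beyond plain orthogonality of rows.

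One small point to watch in your version: the identity $M_PM_P^T=c_P\Pi_{\lambda(P)}$ already uses that $V(\lambda(P))$ is irreducible (Theorem~\ref{theorem:SpechtIrred}) via Schur, so your reformulation of the conjecture is not purely elementary. This is not circular, but it does mean your ``reinterpretation'' step consumes more representation theory than the paper's bare block-matrix argument.
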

\begin{proof}
This proof assumes Conjecture~\ref{conjecture:funnysum}.  There are three parts: first, we build a block matrix whose blocks are built from Specht matrices; second, we use the conjecture to show that this matrix has full rank; finally, we conclude that the regular representation $\mathbb{C}S_n$ is spanned by a sum of Specht modules.

Build a block matrix $M$ with a single block row and a block column for every properly ordered set partition $P$ of the set $\{1, \ldots, n\}$.  The block $M_P$ associated to $P$ is a matrix whose rows are indexed by $S_n$ and whose columns are indexed by the rearrangements of $w^\prime_P$.  The $(\sigma, r)$-entry of $M_P$ will be $\mathrm{Y}_{w_P,w^\prime_P}(\sigma w_P, r)$.  The rows of $M_P$ come directly from the Specht matrix associated to the complementary pair $(w_P, w^\prime_P)$, so the column-span of $M_P$ is isomorphic to the Specht module $V(\lambda_P)$.

Conjecture~\ref{conjecture:funnysum} asserts that the rows of this block matrix are orthogonal under the inner product given by the diagonal inner product $\langle u,v\rangle=\sum_{P;r} d(P)^2 \cdot (u_{P;r}v_{P;r})$, where $u_{P;r}$ denotes the entry of $u$ in the column indexed by $P$ and $r$.  Consequently, the block matrix $M$ has full rank.

The natural action of $S_n$ permuting the rows of this matrix is the regular representation $\mathbb{C}S_n$.  Since $M$ has full rank, the image of $M$ must be the regular representation.  On the other hand, the image of $M$ is the span of the images of $M_P$, and the image of each $M_P$ is isomorphic to some Specht module $V(\lambda_P)$.  Hence the regular representation is spanned by a sum of Specht modules.

The regular representation always contains a copy of every irreducible representation, so every
irreducible representation of $S_n$ must be isomorphic to the Specht module $V(\lambda)$ for some partition $\lambda$.  Since there are as many conjugacy classes of $S_n$ as there are partitions of
$n$, and the number of distinct irreducible representations is always equal to the number of conjugacy classes, the Specht modules must in fact be distinct.
\qed
\end{proof}

\section{Specht Matroids} \label{section:spechtmatroids}

A \emph{matroid} is a combinatorial encoding of the dependence relations among a finite set of vectors in a vector space.  This encoding can be defined in many equivalent ways, each with an axiomitization on some collection of subsets of a \emph{ground set} $E$, which is the abstraction of our original set of vectors.  Because of the presence of many equivalent definitions, each useful in a different context, it has become customary not to define the word ``matroid'' but instead to only give definitions of the \emph{bases}, \emph{independent sets}, \emph{circuits}, \emph{rank function}, or other linear algebra notion associated to a nebulous underlying object $M$, the ``matroid''.

Since we will only work with matroids that actually come from a set of vectors in a $\mathbb{C}$-vector space (called \emph{$\mathbb{C}$-representable matroids}), we will not give any of these abstract definitions. We refer the interested reader to \cite{Oxley}.

Let $E$ be a finite set, which we take to be $\{1,\ldots,k\}$ for convenience, and let $\{v_i\mid i\in E\}$ be a set of vectors spanning a vector space $\mathbb{C}^n$.  Then a subset $B\subseteq E$ is a \emph{basis} of the matroid $M(v_1,\ldots,v_k)$ if $\{v_i\mid i\in I\}$ is a basis of $\mathbb{C}^n$.  A subset $C\subseteq E$ is a \emph{circuit} of $M$ if it is a \emph{minimal dependent set}, meaning that $v(C) = \{v_i\mid i\in C\}$ is dependent but any proper subset of $v(C)$ is independent.  Given some subset $A\subset E$ the \emph{rank} of $A$, denoted $r(A)$, is the dimension of the subspace spanned by $\{v_i\mid i\in A\}$.  A \emph{flat} of $M$ is a maximal subset of $E$ of a given rank; in other words, $F\subseteq E$ is a flat if $r(F\cup\{i\})>r(F)$ for all $i\not\in F$.  One can think of each flat $F$ as representing the subspace spanned by $\{v_i\mid i\in F\}$; this gives a one-to-one correspondence between subspaces spanned by a subset of $\{v_1,\ldots,v_k\}$ and flats.  This correspondence shows that the set of flats of a matroid $M$ forms a lattice under inclusion; this is the \emph{lattice of flats} of $M$.

Given a partition $\lambda$, we define the \emph{Specht matroid} $M(\lambda)$ to be the matroid formed from the columns of the Specht matrix for $\lambda$. Note the Specht matrix depends on a global choice of sign coming from the complementary words chosen, but the matroid is independent of this choice.

\begin{example}\label{matroid_on_(2,2)}
We describe the matroid $M(2,2)$, which is the matroid represented by the columns of the Specht matrix in Example~\ref{action_on_(2,2)}.  The circuits are $\{1122, 2211\}$, $\{1212, 2121\}$, and $\{1221, 2112\}$ and all sets of 3 vectors not containing one of the first 3 sets.  The bases are the $12$ different sets of 2 vectors not containing a circuit.  (One can choose any of the 6 vectors as the first vector in the basis, which leaves 4 choices for the second vector, but this procedure chooses every basis twice.)  The 5 flats are $\emptyset$, the 3 circuits of size 2, and the set of all 6 vectors.
\end{example}

We can characterize the possible circuits of size 2, which also characterizes the flats of rank 1.

\begin{theorem}
The Specht matroid $M(\lambda)$ has a circuit with two elements if and only if the diagram of $\lambda$ has two columns of the same size.
\end{theorem}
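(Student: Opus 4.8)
Throughout, fix a complementary pair $w_1,w_2$; since the matroid is independent of this choice, take $w_2=1^{\lambda^*_1}2^{\lambda^*_2}\cdots$, so the letters of $w_2$ are the columns of $D(\lambda)$ and each letter $c$ occurs $\lambda^*_c$ times, the size of column $c$. For a rearrangement $r$ of $w_2$, let $v_r$ be the corresponding column of the Specht matrix, so $(v_r)_{r_1}=\mathrm{Y}_{w_1,w_2}(r_1,r)$. By Proposition~\ref{proposition:young} the support of $v_r$ is exactly the set of rearrangements $r_1$ of $w_1$ for which placing $r$ on top of $r_1$ produces no repeated column; in particular every $v_r$ is nonzero, since if $\sigma w_2=r$ then $r$ on top of $\sigma w_1$ has the same (all distinct) columns as $w_2$ on top of $w_1$. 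So a circuit of size two is precisely a pair of distinct rearrangements $r\ne r'$ of $w_2$ with $v_r$ and $v_{r'}$ proportional. The plan is to show that $v_r$ and $v_{r'}$ are proportional if and only if $r$ and $r'$ induce the same partition of the positions into equal-letter classes (i.e.\ $r(p)=r(q)\iff r'(p)=r'(q)$ for all positions $p,q$), and to read the theorem off from this.

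\emph{The ``if'' direction.} Suppose $D(\lambda)$ has two columns of the same size. Since the sizes $\lambda^*_1\ge\lambda^*_2\ge\cdots$ weakly decrease, there are adjacent columns with $\lambda^*_i=\lambda^*_{i+1}=:m$, so the letters $i$ and $i+1$ occur equally often in $w_2$; let $s$ be the transposition of the alphabet swapping them, so $r':=s(w_2)$ is a rearrangement of $w_2$ with $r'\ne w_2$ (as $m\ge1$). Identifying positions with the boxes of $D(\lambda)$, let $\tau\in S_n$ exchange the box in row $j$ of column $i$ with the box in row $j$ of column $i+1$, for $1\le j\le m$. Then $\tau$ fixes $w_1$ (it permutes positions within rows), carries $w_2$ to $r'$, and normalizes the stabilizer of $w_2$ (it swaps two column-blocks of equal size). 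From the definition of $\mathrm{Y}$, substituting $\rho\mapsto\sigma\rho$ in the defining sum gives the identity $v_{\sigma w_2}=\mathrm{sign}(\sigma)\,\sigma\cdot v_{w_2}$ for all $\sigma\in S_n$; and a one-line reindexing, using $\tau w_1=w_1$ and the normalizing property, shows $\tau\cdot v_{w_2}=v_{w_2}$. Hence $v_{r'}=\mathrm{sign}(\tau)\,v_{w_2}=(-1)^m v_{w_2}$, and as both columns are nonzero, $\{w_2,r'\}$ is a two-element circuit.

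\emph{The ``only if'' direction.} We prove the contrapositive: assume all columns of $D(\lambda)$ have pairwise distinct sizes, and suppose $v_r$ and $v_{r'}$ are proportional with $r\ne r'$. Since the entries of both columns lie in $\{-1,0,1\}$ by Proposition~\ref{proposition:young} and both columns are nonzero, the scalar relating them is $\pm1$, so $v_r$ and $v_{r'}$ have the same support. The key point is that the support of $v_r$ determines the partition of the positions into $r$-equal-letter classes: for distinct positions $p,q$, one has $r(p)=r(q)$ if and only if $r_1(p)\ne r_1(q)$ for every $r_1$ in the support of $v_r$. The forward implication is immediate from the no-repeated-column condition; for the converse, given $p,q$ with $r(p)\ne r(q)$ one produces an $r_1$ in the support with $r_1(p)=r_1(q)$ by taking the canonical rearrangement of $w_1$ that fills each $r$-class of size $m$ with the row-indices $1,\dots,m$, and then permuting within the two $r$-classes containing $p$ and $q$ so that both receive the index $1$. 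Granting this, $r$ and $r'$ induce the same partition, hence $r'=s(r)$ for a multiplicity-preserving relabeling $s$ of the alphabet of $w_2$; but if all column sizes are distinct then each letter of $w_2$ is the only letter with its multiplicity, forcing $s=\mathrm{id}$ and $r'=r$, a contradiction.

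The main obstacle is the combinatorial fact in the last paragraph---that the support of a Specht-matrix column recovers the partition of positions into equal-letter classes---whose nontrivial half is the small system-of-distinct-representatives construction of the rearrangement $r_1$. A secondary bookkeeping issue is the interaction between the $S_n$-action permuting rearrangements of $w_2$ and the one permuting rearrangements of $w_1$, which is exactly what the identity $v_{\sigma w_2}=\mathrm{sign}(\sigma)\,\sigma\cdot v_{w_2}$ records.
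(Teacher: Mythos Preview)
Your proof is correct. The ``if'' direction is essentially the paper's argument, though you make explicit the group-theoretic mechanism (the identity $v_{\sigma w_2}=\mathrm{sign}(\sigma)\,\sigma\cdot v_{w_2}$ together with $\tau$ fixing $w_1$ and normalizing $\mathrm{Stab}(w_2)$) where the paper simply asserts the entrywise equality $Y(s,w)=(-1)^{\mu_i}Y(s,r)$.

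The ``only if'' directions genuinely diverge. The paper uses the transitive $S_n$-action on the ground set to reduce to showing that $v_{w_2}$ is not parallel to any $v_r$ with $r\ne w_2$, and then builds one specific rearrangement $s$ of the row word with $Y(s,w_2)=-1$ but $Y(s,r)=0$. You skip the transitivity reduction and instead prove a stronger structural statement: the support of $v_r$ already determines the set partition of positions into $r$-equal-letter blocks (via the equivalence $r(p)=r(q)\iff r_1(p)\ne r_1(q)$ for all supported $r_1$). This characterizes \emph{all} parallel pairs $(v_r,v_{r'})$, not merely their existence: they occur exactly when $r'$ is an alphabet relabelling of $r$ by a multiplicity-preserving permutation, regardless of whether $\lambda$ has repeated column sizes. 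The distinct-column hypothesis enters only at the very end to force that permutation to be the identity. Both arguments ultimately rest on exhibiting a row-word rearrangement with prescribed column behaviour; the paper's is a one-off witness, while yours is a systematic filling (each $r$-class of size $m$ by row-indices $1,\dots,m$) that yields a reusable lemma.
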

\begin{proof}
For simplicity, we let $\mu=\lambda^*$ and let our complementary words be $x$ and $w$, where $w=1^{\mu_1}\cdots k^{\mu_k}$ (with $k=\lambda_1$) and $x=12\cdots\mu_11\cdots\mu_2\cdots1\cdots\mu_k$.  Note that $(x,w)$ is indeed part of the free orbit on rearrangements of $(x,w)$.  We could give a proof describing the circuits involving  any arbitrary rearrangement $r$ of $w$, but  since the symmetric group $S_n$ acts transitively on the matroid $M(\lambda)$, we can choose our favorite rearrangement of $w$, which will be $w$ itself, and for any other rearrangement $r$, we will have a circuit of two elements involving $r$ if and only if there is a circuit of two elements involving $w$.

Suppose $\lambda$ has two columns of the same size, or, in other words, there exists some $i$ such that $\mu_i=\mu_{i+1}$.  Consider $w$ and the rearrangement $r$ where all the $i$'s and $(i+1)$'s have been switched, so $r=1^{\mu_1}\cdots(i-1)^{\mu_{i-1}}(i+1)^{\mu_i}(i)^{\mu_i}(i+2)^{\mu_{i+2}}\cdots k^{\mu_k}$.  For any rearrangement $s$ of the row word, we have $Y(s,w)=(-1)^{\mu_i} Y(s, r)$.  Hence, $v_w- (-1)^{\mu_i} v_r = 0$, and $\{w, r\}$ forms a circuit.

Now, suppose all the columns of $\lambda$ are distinct.  Suppose $r$ is some rearrangement of $w$ (with $r\neq w$).  We show there is some rearrangement $s$ of the row word such that $Y(s,w)\neq Y(s,r)$.  Let $k$ be the smallest integer such that $w_k\neq r_k$, and let $i=w_k$ and $j=r_k$.  By our construction of $w$, $j>i$, and since the columns of $\lambda$ are distinct, $\mu_j<\mu_i$.  Also, $r_M=i$ for some $M>m$, where $m=\mu_1+\cdots+\mu_i$ is the position of the last occurrence of the letter $i$ in $w$.  Now consider the rearrangement $s$ of the row word switching $x_k$ and $x_m$. Note that $(s,w)$ is part of the free orbit, and, in fact, $Y(s,w)=-1$.  However, $Y(s,r)=0$ since $(s_M,r_M)=(a,i)$ for some $a<x_k$ and also $(s_{k-x_k-a},r_{k-x_k-a})=(a,i)$.  Hence $v_r$ and $v_w$ do not form a circuit for any rearrangement $r$.
\qed
\end{proof}

Matroids have a number of interesting invariants.  One is the characteristic polynomial, a generalization of the chromatic polynomial for a graphical matroid.  The characteristic
polynomial $p_M(t)$ for a matroid $M$ can be calculated recursively by deletion and contraction, so it is a specialization of the Tutte polynomial $T_M(x,y)$.  It would be interesting to find formulas
or characterizations of the Tutte or characteristic polynomials of Specht matroids.
Another important invariant is the Chow ring of a matroid, which we address in the following section.

\begin{example}
For the Specht matroid $M(2,1,1,1)$, we use \emph{Sage} to compute the Tutte polynomial, which is
$$T_{M(2,1,1,1)}(x,y) = x^4 + x^3 + x^2 + x + y.$$
One obtains the characteristic polynomial from the Tutte polynomial by the formula $p_M(t) = (-1)^{r(M)}T_M(1-t,0)$. 
Since $r(M(2,1,1,1)) = 4$, we get $$p_{M(2,1,1,1)}(t)= t^4 - 5t^3 + 10t^2 - 10t + 4.$$

Similar computations produce
$$p_{M(3,2)}(t)=t^5 - 15t^4 + 90t^3 - 260t^2 + 350t - 166$$
and
$$p_{M(2,2,1)}(t) = t^5 - 10t^4 + 45 t^3 - 105 t^2 + 120t - 51.$$
\end{example}

\section{Chow Rings} \label{section:chow}

Given a matroid $M$, Feichtner and Yuzvinksy~\cite{FY} (following DeConcini and Procesi~\cite{DP} in the representable case) defined the \emph{Chow ring} of $M$, which we denote by $A^*(M)$, to be the ring $\mathbb{Q}[x_F]/I_M$ presented as follows.  There is one generator $x_F$ for each nonempty flat $F$, and the ideal $I_M$ is generated by the following two types of relations:
\begin{itemize}
\item $x_F x_G \in I_M$ whenever $F$ and $G$ are incomparable, and.
\item $\sum_{F\supset \{e\}} x_F \in I_M$ for every element $e$ in the ground set.
\end{itemize}

The definition of Feichtner and Yuzvinsky also requires as input a
\emph{building set}, which is a subset of the flats satisfying some
combinatorial properties with respect to the lattice.  The definition
we have given here is the case of the {\it maximal} building set,
which is the one containing every nonempty flat.

Note that a slightly different presentation of the Chow ring appears also in the
literature. In~\cite{AHK}, Adiprasito, Huh, and Katz use a presentation that differs
from the one of Feichtner and Yuzvinksy in not
using the generator (which can be rewritten in terms of other
generators) corresponding to the entire ground set of $M$.

The next example gives a solution to Problem~1 on Grassmannians in \cite{Sturmfels}.

\begin{example}\label{Example:apprenticeship1}
Let us consider the matroid $M$ which is formed from the columns of the matrix 
$$
\left(
\begin{array}{cccccc}
 1 & 0 & 0 & 1 & 1 & 1 \\
 0 & 1 & 0 & 2 & 3 & 4 \\
 0 & 0 & 1 & 0 & 0 & 1 \\
\end{array}
\right),
$$
where the columns are labelled $0,\ldots,5$.  This matrix represents a point in $\Gr(3,\mathbb{C}^6)$ with $16$ nonzero Pl\"ucker coordinates.
 
To determine the Chow ring of $M$, first we list all nonempty flats of $M$:
\begin{multline*}
\{0\}, \{0, 1, 2, 3, 4, 5\}, \{0, 1, 3, 4\}, \{0, 2\}, \{0,
5\}, \{1\}, \{1, 2\}, \{1, 5\}, \\
\{2\}, \{2, 3\}, \{2,
4\}, \{2, 5\}, \{3\}, \{3, 5\}, \{4\}, \{4, 5\},
\{5\}.
\end{multline*}
There is one generator $x_F$ of the Chow ring $A^*(M)$ for each
nonempty flat $F$.

The monomial generators in $I_M$ coming from pairs of incomparable
flats are
\begin{multline*}
x_{0}x_{12}, x_{0}x_{15}, x_{0}x_{23}, x_{0}x_{24},
x_{0}x_{25}, x_{0}x_{35}, x_{0}x_{45}, \\ x_{1}x_{02},
x_{1}x_{05}, x_{1}x_{23}, x_{1}x_{24}, x_{1}x_{25},
x_{1}x_{35}, x_{1}x_{45}, \\ x_{2}x_{05}, x_{2}x_{15},
x_{2}x_{35}, x_{2}x_{45}, x_{2}x_{0134}, \\ x_{3}x_{02},
x_{3}x_{12}, x_{3}x_{05}, x_{3}x_{15}, x_{3}x_{24},
x_{3}x_{25}, x_{3}x_{45}, \\ x_{4}x_{02}, x_{4}x_{12},
x_{4}x_{05}, x_{4}x_{15}, x_{4}x_{23}, x_{4}x_{25},
x_{4}x_{35}, \\ x_{5}x_{02}, x_{5}x_{12}, x_{5}x_{23},
x_{5}x_{24}, x_{5}x_{0134}.
\end{multline*}
The relations in $I_M$ of the second type are
\begin{multline*}
x_{0}+x_{02}+x_{05}+x_{0134}+x_{012345},\ 
x_{1}+x_{12}+x_{15}+x_{0134}+x_{012345}, \\
x_{2}+x_{02}+x_{12}+x_{23}+x_{24}+x_{25}+x_{012345},\ 
x_{3}+x_{23}+x_{35}+x_{0134}+x_{012345}, \\
x_4+x_{24}+x_{45}+x_{0134}+x_{012345},\ 
x_5+x_{05}+x_{15}+x_{25}+x_{35}+x_{45}+x_{012345}.
\end{multline*}
Copying these generators and relations into \emph{Macaulay 2} (either by hand
or using the \emph{Sage} code in Section~\ref{section:ChowCode}), we obtain
that the Hilbert series of $A^*(M)$ is $1+11T+T^2$.
\end{example}

We list the dimensions of $A^i(M(\lambda))$ for all partitions $\lambda$ of $n=4$ and $n=5$ in
Table~\ref{table:chow-dims}.

\begin{table}[!htb]
\normalsize
\begin{center}
$\begin{array}{c|rrrrrr}
\lambda & 0 & 1 & 2 & 3 & 4 & 5 \\
\hline
(4) & 1 & &&&& \\
(3,1) & 1 & 8 & 1 & & & \\
(2,2) & 1 & 1 & & & &  \\
(2,1,1) & 1 & 7 & 1 & & & \\
(1,1,1,1) & 1 & &&&& \\
(5) & 1 & &&&& \\
(4,1) & 1 & 41 & 41 & 1 & &\\
(3,1,1) & 1 & 303 & 2553 & 2553 & 303 & \ \ \ 1 \ \\
(2,2,1) & 1 & 151 & 541 & 151 & 1 & \\
(3,2) & 1 & 256 & 1026 & 256 & 1 & \\
(2,1,1,1) & 1 & 21 & 21 & 1 & & \\
(1,1,1,1,1) & 1 & &&&&
\end{array}$
\caption{Dimensions of Chow groups of $M(\lambda)$}
\label{table:chow-dims}
\end{center}
\end{table}

Note that every row of Table~\ref{table:chow-dims} is palindromic, meaning that $\dim
A^i(M(\lambda))= \dim A^{d-1-i}(M(\lambda))$ for all $i$, where $d=\dim
V(\lambda)$.  In fact, the Chow ring $A^*(M)$ satisfies an algebraic version of Poincar\'e
duality for any matroid $M$.  Feichtner and Yuzvinsky~\cite[Corollary
2]{FY} prove this fact for a representable matroid $M$ by showing that $A^*(M)$
is the cohomology ring of a smooth, proper algebraic variety.  This equality of dimensions
was recently extended to non-representable matroids by Adiprasito,
Huh, and Katz~\cite[Theorem 6.19]{AHK} as the first major step in
their proof of the log-concavity of coefficients of the characteristic
polynomial of an arbitrary matroid. Finding a combinatorial interpretation of these dimensions
remains an interesting open problem.

By finding a Gr\"obner basis for $I_M$ and determining the standard
monomials, Feichtner and Yuzvinsky describe a monomial basis for $A^*(M)$
as follows~\cite[Corollary 1]{FY}.

\begin{theorem}[Paraphrased from \cite{FY}, Corollary 1]\label{Theorem:basis_for_chow_ring}
  The ring $A^*(M)$ has a basis consisting of the monomial $1$ and monomials
  $\prod_{i=1}^k x_{F_i}^{d_i}$ such that $F_1\subseteq \cdots \subseteq F_k$
  and $0<d_i<\mathrm{rank}(F_i)-\mathrm{rank}(F_{i-1})$ for all $i$ (considering
  $\mathrm{rank}(F_0)=0$ by convention).
\end{theorem}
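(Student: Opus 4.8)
The plan is to follow the route indicated just before the statement. Work in the polynomial ring $P = \mathbb{Q}[x_F : F \text{ a nonempty flat}]$, so that $A^*(M) = P/I_M$. I would fix a term order $<$ on the monomials of $P$ adapted to the lattice of flats --- for instance, pick a linear extension of the inclusion order on nonempty flats, use it to order the variables so that a smaller flat gets a larger variable, and take the resulting degree-lexicographic order --- and produce a Gr\"obner basis $\mathcal{G}$ for $I_M$ with respect to $<$. The incomparability generators $x_F x_G$ (for $F,G$ incomparable) are already leading monomials. The remaining elements of $\mathcal{G}$ come from the linear relations $\ell_e = \sum_{F \supseteq \{e\}} x_F$: one multiplies $\ell_e$ by monomials supported on chains of flats and reduces modulo the incomparability relations and the previously-constructed elements, producing, for each chain $F_1 \subsetneq \cdots \subsetneq F_k$ of nonempty flats, an element of $I_M$ whose leading monomial is the least non-standard monomial supported on that chain, namely one of the form $x_{F_1}\cdots x_{F_{k-1}} x_{F_k}^{\,r(F_k)-r(F_{k-1})}$ (reading $r(F_0)=0$). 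Once $\mathcal{G}$ is in hand, the standard monomials --- those divisible by no leading monomial of $\mathcal{G}$ --- descend to a $\mathbb{Q}$-basis of $P/I_M$, and one checks directly that a monomial is standard if and only if it is $1$ or a product $\prod_{i=1}^k x_{F_i}^{d_i}$ with $F_1 \subsetneq \cdots \subsetneq F_k$ a chain and $0 < d_i < r(F_i) - r(F_{i-1})$ for all $i$: failure of the chain condition makes the monomial a multiple of some $x_Fx_G$, and failure of the exponent bound at position $i$ makes the initial segment $x_{F_1}\cdots x_{F_{i-1}}x_{F_i}^{\,r(F_i)-r(F_{i-1})}$ a leading monomial dividing it.

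The step I expect to be the main obstacle is verifying that the collection just described really is a Gr\"obner basis: pinning down the exact recursive recipe for the non-monomial elements and then running Buchberger's criterion, i.e.\ checking that every $S$-polynomial reduces to zero. The $S$-polynomials among the monomial generators $x_Fx_G$ are immediate; the ones involving the elements coming from the $\ell_e$ require careful bookkeeping, and the structural fact that makes the reductions close up is the submodularity of the matroid rank function, $r(F) + r(G) \ge r(F \wedge G) + r(F \vee G)$ for flats $F,G$, which governs how the rank differences along two chains combine when the chains are amalgamated along a common part. This is exactly the computation performed in \cite{FY}, and I would simply carry it over; there is nothing special about Specht matroids here.

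Finally, it is worth noting a shortcut available in the situation of this paper, where every matroid in sight is $\mathbb{C}$-representable. For a representable matroid, Feichtner and Yuzvinsky (following De Concini--Procesi) identify $A^*(M)$ with the cohomology ring of the associated smooth proper wonderful compactification, whose Poincar\'e polynomial is given by an explicit formula in the lattice of flats. It then suffices to prove only the spanning half by hand: any monomial whose support contains incomparable flats dies in $A^*(M)$, and for a monomial with chain support the identity $x_F \ell_e = 0$ with $e \in F$ rewrites $x_F^2$ as $-\sum_{G \supsetneq F} x_F x_G - \sum_{G \subsetneq F,\, e \in G} x_F x_G$, a move that can be iterated to push any monomial into the $\mathbb{Q}$-span of the claimed ones; comparing the number of claimed monomials in each degree with the coefficients of the known Poincar\'e polynomial then upgrades ``spanning'' to ``basis''. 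The fiddly points on this shorter path are arranging the rewriting so that it provably terminates and checking that the two enumerations agree.
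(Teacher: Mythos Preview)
The paper does not actually prove this theorem; it merely states it, attributing the result to Feichtner and Yuzvinsky \cite{FY}. The sentence immediately preceding the statement---``By finding a Gr\"obner basis for $I_M$ and determining the standard monomials, Feichtner and Yuzvinsky describe a monomial basis for $A^*(M)$ as follows''---is the entirety of the paper's discussion of the proof. Your outline therefore matches exactly what the paper says about the argument: the original proof in \cite{FY} proceeds by exhibiting a Gr\"obner basis for $I_M$ (with respect to a term order compatible with the lattice of flats) and reading off the standard monomials, which is precisely your primary plan. The alternative ``shortcut'' you mention for representable matroids is not something the paper invokes, but it is a reasonable side remark.
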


The next example illustrates how to use Theorem~\ref{Theorem:basis_for_chow_ring}.
\begin{example}\label{Example:apprenticeship2}
Consider the Specht matroid $M(2,1,1,1)$. This is a uniform matroid on $5$ elements. Let us denote the elements of its ground set by $\{0,1,2,3,4\}$. Using \emph{Sage} we get the list of nonempty flats of $M(2,1,1,1)$ in Table~\ref{table:2111-flats}. 

\begin{table}[!htb]
\normalsize
\begin{center}
$\begin{array}{c|c}
\text{rank} & \text{flats}  \\
\hline
1 & \{0\},\{1\},\{2\},\{3\},\{4\} \\
2 & \{0, 1\}, \{0, 2\}, \{0, 3\}, \{0, 4\}, \{1, 2\}, \{1, 3\}, \{1, 4\}, \{2, 3\}, \{2, 4\}, \{3, 4\}\\
3 &

\begin{tabular}[c]{@{}l@{}}\{0, 1, 2\}, \{0, 1, 3\}, \{0, 1, 4\}, \{0, 2, 3\}, \{0, 2, 4\}, \\ \{0, 3, 4\}, \{1, 2, 3\}, \{1, 2, 4\}, \{1, 3, 4\}, \{2, 3, 4\} \end{tabular}\\
4 & \{0,1,2,3,4\} 
\end{array}$
\caption{Nonempty flats of $M(2,1,1,1)$}
\label{table:2111-flats}
\end{center}
\end{table}
By considering one element sequences of flats we get one monomial of degree $1$ from each flat of rank greater than $1$. This gives $21$ monomials of degree $1$. Flats of rank $1$ do not contribute to the list of monomials since there is no integer $d$ such that $0<d<1$.

We can get a monomial of degree $2$ in two ways. Quadratic monomials which are a square of only one variable are obtained from one element sequences consisting of flats of rank greater than $2$. There are $11$ such flats. For the other quadratic monomials, we need to consider sequences of flats of length $2$ such that the rank of the first element and the difference between the ranks of the two elements are each at least $2$. In our case we get $10$ such sequences. They are of the form $F_k\subset F_{\{0,1,2,3,4\}}$, where $F_k$ is a flat of rank~$2$.

The only way to obtain a monomial of degree $3$ is from the one element sequence~$F_{\{0,1,2,3,4\}}$. Since the biggest rank of a flat in our matroid is $4$, there are no monomials of degree $4$ of higher. 

The dimensions we obtained are $1,21,21,1$. Note that this agrees with the next-to-last row of Table~\ref{table:chow-dims}.

\end{example}

In the case where a matroid $M$ is a Specht matroid, Theorem~\ref{Theorem:basis_for_chow_ring} has a rather appealing
consequence.  We say that a (finite dimensional) representation $V$ of
a (finite) group $G$ is a \emph{permutation representation} if the
group action on $V$ actually arises from an action of $G$ on a basis
of $V$.  In other words, this means $V$ has a basis $v_1,\ldots,v_d$
such that, for all $i$ with $1\leq i\leq d$ and all $g\in G$, $g\cdot
v_i = v_j$ for some basis element $v_j$.  These representations are
particularly easy to understand since one only has to understand the
combinatorics of a group acting on a finite set.

Given an element $v_r$ of (the ground set of) the Specht matroid, the
action of any permutation $\sigma\in S_n$ on $V(w_1)$ takes $v_r$ to
$(-1)^\sigma v_{\sigma^{-1}r}$.
Therefore, given a flat $F$, which we think of as the subspace spanned
by $\{v_{r_1},\ldots, v_{r_k}\}$, a permutation $\sigma$ sends $F$ to
the flat $\sigma^{-1}F$ corresponding to the subspace spanned by
$\{v_{\sigma^{-1}r_1},\ldots,v_{\sigma^{-1}r_k}\}$.  This action on flats induces an
action on the Chow ring $A^*(M(\lambda))$ sending a monomial
$\prod_{i=1}^k x_{F_i}^{d_i}$ to $\prod_{i=1}^k x_{\sigma^{-1}F_i}^{d_i}$.
Since a monomial satisfies the conditions of Theorem~\ref{Theorem:basis_for_chow_ring}
if and only if its image under the action of $\sigma$ does, the action of $S_n$
on $A^*(M(\lambda))$ is a permutation action.  Indeed, if one understood the action of $S_n$
on the set of flags of flats of $M(\lambda)$, one would be able to easily determine the graded
character of $A^*(M(\lambda))$ by substituting characters for dimensions in the computations
of Feichtner and Yuzvinsky~\cite[p. 526]{FY}

\section{Specht Polytopes} \label{section:spechtpolytopes}

Given a partition $\lambda$, we define the \emph{Specht polytope} $P(\lambda)$ to be the convex hull (in $\mathbb{R}^N$ where $N=\binom{n}{\lambda}$) of the columns of the Specht matrix.  Note this polytope is defined only up to a global sign; this choice will be irrelevant for our purposes since any polytope is projectively equivalent to its negative.

\begin{example}
Consider the partition $(2,1,1)$. The row and column words for this partition are $1123$ and $1211$, respectively. The Specht matrix is in Table~\ref{table:specht-matrix-211}.

\begin{table}[!htb]
\normalsize
\begin{center}
$
\begin{array}{c|cccc}
& 1211 & 1121 & 1112 & 2111 \\
\hline
1123 & 1 & 0 & 0 & -1 \\
1132 & -1 & 0 & 0 & 1 \\
1213 & 0 & -1 & 0 & 1 \\
1231 & 0 & 0 & 1 & -1 \\
1312 & 0 & 1 & 0 & -1 \\
1321 & 0 & 0 & -1 & 1 \\
2113 & -1 & 1 & 0 & 0 \\
2131 & 1 & 0 & -1 & 0 \\
2311 & 0 & -1 & 1 & 0 \\
3112 & 1 & -1 & 0 & 0 \\
3121 & -1 & 0 & 1 & 0 \\
3211 & 0 & 1 & -1 & 0 \\
\end{array}.$
\caption{Specht matrix for $(2,1,1)$}
\label{table:specht-matrix-211}
\end{center}
\end{table}
Using $\emph{Macaulay2}$ we can verify that the polytope in $\mathbb{R}^{12}$ which is the convex hull of the columns of the matrix in Table~\ref{table:specht-matrix-211} is a $3$-dimensional simplex. 
\end{example}

\begin{theorem} \label{theorem:allvertices}
Every column of the Specht matrix is a vertex of $P(\lambda)$.
\end{theorem}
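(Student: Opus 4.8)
The plan is to produce, for each column $v_r$ of the Specht matrix, a linear functional on $\mathbb{R}^N$ that is uniquely maximized over $P(\lambda)$ at the point $v_r$; such a functional certifies $v_r$ as a vertex. The functional I would use is $x \mapsto \langle v_r, x\rangle$, the standard inner product with $v_r$ itself, where the standard basis of $\mathbb{R}^N$ is taken to be the rearrangements of the row word $w_1$.

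The crux is the observation that all columns of the Specht matrix have one and the same Euclidean length $\rho$. To see this, recall that $S_n$ acts on $V(w_1)$ by permuting its standard basis, hence by an orthogonal transformation, while (as computed in Section~\ref{section:chow}) the permutation $\sigma$ carries the column $v_r$ to $(-1)^{\sigma} v_{\sigma^{-1} r}$; since $S_n$ acts transitively on the rearrangements of $w_2$, the length $\|v_r\|$ cannot depend on $r$. (Equivalently, by Proposition~\ref{proposition:young} the quantity $\|v_r\|^2$ counts the rearrangements $r_1$ of $w_1$ for which $(r_1, r)$ lies in the unique free orbit, and equivariance of the projection of that orbit onto its second coordinate shows this count equals the order of the stabilizer of $w_2$ in $S_n$, independently of $r$.) In particular $\rho > 0$, the Specht module being nonzero. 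Granting this, the proof concludes at once: for any column $v_s$, Cauchy--Schwarz gives $\langle v_r, v_s\rangle \le \|v_r\|\,\|v_s\| = \rho^2$, with equality forcing $v_s = v_r$, because equality in Cauchy--Schwarz between two vectors of equal positive norm means they coincide. Hence the hyperplane $\{x : \langle v_r, x\rangle = \rho^2\}$ supports $P(\lambda)$ and meets it in the single point $v_r$, which is therefore a vertex.

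I do not expect a genuine obstacle. The one point requiring care is the equal-length claim, and with it the fact that distinct rearrangements of $w_2$ may index the very same column vector (already visible for $\lambda = (2,2)$); but this is harmless, since $\langle v_r, \cdot\rangle$ is still uniquely maximized at the \emph{point} $v_r$. The degenerate partition $\lambda = (n)$, where $V(w_1)$ is one-dimensional and every column equals $\pm 1$, is covered by exactly the same argument.
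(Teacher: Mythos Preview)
Your argument is correct and takes a genuinely different route from the paper's. The paper argues by contradiction: if some column were a non-trivial convex combination of the others, then by the transitive $S_n$-action on columns \emph{every} column would be, so $P(\lambda)$ would have no vertices at all, which is absurd. You instead use the transitive action to show that all columns lie on a common sphere of radius $\rho$, and then exhibit for each $v_r$ the explicit supporting hyperplane $\{x:\langle v_r,x\rangle=\rho^2\}$ via Cauchy--Schwarz. Both proofs rest on the same underlying symmetry, but yours is constructive---it names the certifying functional---while the paper's is a two-line nonconstructive contradiction that leans on the general fact that a nonempty polytope has vertices. Your careful handling of the possibility $v_r=v_s$ for $r\neq s$ is a point the paper's terse proof glosses over; with the ``equal norms on a sphere'' picture this subtlety is indeed harmless, exactly as you say.
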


\begin{proof}
Suppose some column of the Specht matrix could be written as a non-trivial convex combination of the others.  Since $S_n$ acts transitively on the columns of the Specht matrix, this would mean that every column could be written as a convex combination of the others, which would mean $P(\lambda)$ has no vertices, which is impossible.
\qed
\end{proof}

\begin{theorem}\label{Theorem:origin}
Every Specht polytope other than $P(1,\ldots,1)$ contains the origin.
\end{theorem}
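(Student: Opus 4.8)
The plan is to show that the average of the columns $v_r$ of the Specht matrix equals the origin whenever $\lambda\neq(1,\ldots,1)$. By Theorem~\ref{theorem:allvertices} each such column is a vertex of $P(\lambda)$, so this average is a convex combination of vertices and hence a point of $P(\lambda)$; thus it suffices to prove $\sum_r v_r=0$, the sum running over the rearrangements $r$ of the column word $w_2$.

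For the main step I would use the description of the $S_n$-action recalled in Section~\ref{section:chow}: a permutation $\sigma$ sends $v_r$ to $(-1)^\sigma v_{\sigma^{-1}r}$, so $\sigma\cdot\sum_r v_r=(-1)^\sigma\sum_r v_r$. Thus the line spanned by $\sum_r v_r$ is a subrepresentation of the Specht module $V(\lambda)$ on which $S_n$ acts through the sign character $\varepsilon$. Since $V(\lambda)$ is irreducible (Theorem~\ref{theorem:SpechtIrred}), either $\sum_r v_r=0$, and we are done, or $V(\lambda)\cong\varepsilon$. In the latter case $\dim V(\lambda)=1$, so by the Frame--Robinson--Thrall hook-length formula recalled above $\lambda$ is a single row or a single column; but $V(n)$ is the trivial representation, not $\varepsilon$, so $\lambda=(1,\ldots,1)$, which is excluded. (Equivalently, $\varepsilon$ is the Specht module of the one-column diagram, and distinct partitions give non-isomorphic Specht modules.) Either way $\sum_r v_r=0$, and $0\in P(\lambda)$.

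If one prefers to avoid invoking irreducibility, $\sum_r v_r=0$ can instead be checked by hand: its coordinate indexed by a rearrangement $s$ of the row word $w_1$ is $\sum_r\mathrm{Y}_{w_1,w_2}(s,r)$, and by Definition~\ref{definition:YoungsChar} this equals $\sum_{\sigma\,:\,\sigma w_1=s}\mathrm{sign}(\sigma)$, because each $\sigma$ with $\sigma w_1=s$ contributes to exactly the term $r=\sigma w_2$. That index set is a left coset of the Young subgroup $\mathrm{Stab}(w_1)=S_{\lambda_1}\times\cdots\times S_{\lambda_\ell}$ ($\ell=\ell(\lambda)$), and the signs over a coset of a subgroup sum to zero once the subgroup contains an odd permutation, which happens precisely when some $\lambda_i\geq 2$, i.e.\ precisely when $\lambda\neq(1^n)$. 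I do not anticipate a genuine obstacle here; the content of the proof is simply that $\sum_r v_r$ transforms under $S_n$ by the sign character and so must vanish unless $V(\lambda)$ is itself $\varepsilon$. The only thing that needs a little care is identifying $(1,\ldots,1)$ as the unique exception — and there the computation is consistent, since $\mathrm{Stab}(w_1)$ is then trivial, $\sum_r v_r$ is a nonzero vector of $\pm 1$'s, and $P(1^n)$ is a single point away from the origin — while also keeping straight which of the two complementary words is the row word, as Definition~\ref{SpechtMatrix} may replace $w_1,w_2$ by complementary rearrangements before forming the matrix.
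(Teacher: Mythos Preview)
Your proposal is correct, and in fact your ``by hand'' alternative is exactly the paper's argument: the paper shows each row of the Specht matrix sums to zero by observing that the nonzero entries in row $s$ are the signs of the permutations in the coset $\{\sigma:\sigma w_1=s\}$ of $\mathrm{Stab}(w_1)\cong S_{\lambda_1}\times\cdots\times S_{\lambda_\ell}$, which contains an odd permutation precisely when $\lambda\neq(1^n)$. So your second paragraph could stand in for the paper's proof verbatim.

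Your primary argument via irreducibility is a genuinely different route. It trades the explicit coset computation for the structural observation that $\sum_r v_r$ spans a copy of $\varepsilon$ inside $V(\lambda)$, which irreducibility forbids unless $V(\lambda)\cong\varepsilon$. This is cleaner conceptually but costs you the identification of the exceptional partition: you need to know that $\varepsilon$ occurs only for $\lambda=(1^n)$, which in the paper's development requires either Theorem~\ref{theorem:complete} (stated conditionally on Conjecture~\ref{conjecture:funnysum}) or the side computation that $V(n)$ is trivial. The paper's direct approach avoids this dependency entirely and yields the slightly sharper statement that every row sum vanishes, not merely the column sum. One small note: the appeal to Theorem~\ref{theorem:allvertices} is unnecessary, since the average of the columns lies in their convex hull regardless of whether they are all vertices.
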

\begin{proof}
First we will show that each row of a Specht matrix corresponding to a partition different from $(1,\ldots,1)$ contains the same number of $1$'s as of $-1$'s. Let $r$ be some permutation of a row word, and let $\mathrm{Stab}(r)$ be the set of all permutations preserving $r$. Since we excluded the partition $(1,\ldots,1)$, $\mathrm{Stab}(r)$ is a nontrivial direct product of symmetric groups. Nonzero entries in the row corresponding to $r$ in the Specht matrix have values $1$ or $-1$ depending on the signature of an element in $\mathrm{Stab}(r)$. Since $\mathrm{Stab}(r)$ has an equal number of odd and even permutations, the entries add up to $0$. So if $c_1,\ldots, c_m$ are the columns of the Specht matrix, then $\sum_{i=1}^m \frac{1}{m}c_i = 0$, which finishes the proof.


\qed
\end{proof}
\begin{theorem}\label{Theorem:dimSpechtPolytopeModule}
The dimension of the Specht polytope matches the dimension of the Specht module for any partition other than $(1,\ldots,1)$.
\end{theorem}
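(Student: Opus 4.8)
The plan is to deduce the statement from Theorem~\ref{Theorem:origin} by a short affine-geometry argument. Recall that the dimension of a polytope is by definition the dimension of its affine hull, and that $\dim V(\lambda)$ is the rank of the Specht matrix $\varphi(w_1,w_2)$, i.e.\ the dimension of the linear span of its columns $c_1,\dots,c_m$ (this rank does not change when the integer matrix is viewed over $\mathbb{R}$ instead of over $\mathbb{C}$). So the task reduces to showing that $\mathrm{aff}(c_1,\dots,c_m)$ and $\mathrm{span}(c_1,\dots,c_m)$ have the same dimension.

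First I would record the general linear-algebra fact: for any finite set $S$ of vectors in a real vector space one always has $\mathrm{aff}(S)\subseteq\mathrm{span}(S)$, and the two coincide precisely when $0\in\mathrm{aff}(S)$; if $0\notin\mathrm{aff}(S)$ then $\dim\mathrm{aff}(S)=\dim\mathrm{span}(S)-1$. Indeed, if $0\in\mathrm{aff}(S)$ then $\mathrm{aff}(S)$ is an affine subspace through the origin, hence a linear subspace, and since it contains every element of $S$ it contains $\mathrm{span}(S)$; together with the reverse inclusion this forces equality.

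Next I would invoke Theorem~\ref{Theorem:origin}: for every partition $\lambda\neq(1,\dots,1)$ the origin lies in $P(\lambda)=\mathrm{conv}(c_1,\dots,c_m)$, and in particular $0\in\mathrm{aff}(c_1,\dots,c_m)$. By the previous paragraph $\mathrm{aff}(c_1,\dots,c_m)=\mathrm{span}(c_1,\dots,c_m)$, and therefore
$$\dim P(\lambda)=\dim\mathrm{aff}(c_1,\dots,c_m)=\dim\mathrm{span}(c_1,\dots,c_m)=\mathrm{rank}\,\varphi(w_1,w_2)=\dim V(\lambda).$$

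There is essentially no obstacle: all the content is carried by Theorem~\ref{Theorem:origin}, and the only point needing care is the routine observation that the affine hull of a point set drops by exactly one dimension when it misses the origin. It is worth noting why the case $\lambda=(1,\dots,1)$ must be excluded: there the Specht matrix is a single column with entries $\pm1$, so $V(1,\dots,1)$ is the one-dimensional sign representation while $P(1,\dots,1)$ is a single point (which is not the origin), and the two dimensions genuinely differ by one.
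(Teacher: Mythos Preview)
Your proof is correct and follows exactly the same route as the paper's own argument: invoke Theorem~\ref{Theorem:origin} to get $0\in P(\lambda)$, then observe that this forces the affine hull of the columns to equal their linear span. The paper states this in two sentences without spelling out the affine-geometry fact or the excluded case, but your more detailed version is the same proof.
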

\begin{proof}
Since the Specht module is the span of the columns of a Specht matrix, its dimension is equal to the rank of this matrix. By Theorem~\ref{Theorem:origin}, the corresponding polytope contains the origin, so its dimension matches the dimension of the linear span of its vertices.
\qed
\end{proof}

We conclude this section with Table~\ref{table:f-vectors}, which gives $f$-vectors and dimensions of some of the Specht polytopes. We have not found yet any interpretation of these data. 

\begin{table}[!htb]
\normalsize
\begin{center}
$\begin{array}{c|c|c}
\lambda & \text{dimension} & f\text{-vector} \\
\hline
(3,1) & 3 & (1, 12, 24, 14, 1)\\
(2,2) & 2 & (1, 3,3,1)\\
(2,1,1) & 3 & (1, 4,6,4,1)\\
(4,1) & 4 & (1, 20, 60, 70, 30, 1)\\
(3,2) & 5 & (1, 15,60,80,45,12,1) \\
(3,1,1) & 6 & (1, 20, 120, 290, 310, 144, 24, 1)\\
(2,2,1) & 5 & (1, 10, 45, 90, 75, 22, 1)\\
(2,1,1,1) & 4 & (1, 5,10,10,5,1) 
\end{array}$
\caption{Dimensions and $f$-vectors of Specht polytopes}
\label{table:f-vectors}
\end{center}
\end{table}
\section{Examples: The Partitions $(2,1^{n-1})$ and $(n-1, 1)$} \label{section:examples}
In the previous section we saw that the Specht polytope for the partition $(2,1,1)$ is a simplex. In fact, any partition of the form $(2,1,\ldots,1)$ corresponds to a simplex.

\begin{theorem}
The Specht polytope $P(2,1,\ldots,1)$ is an $(n-1)$-dimensional simplex.
\end{theorem}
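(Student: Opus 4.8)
The goal is to show $P(2,1^{n-1})$ is an $(n-1)$-simplex. By Theorem~\ref{theorem:allvertices}, every column of the Specht matrix is a vertex, and by Theorem~\ref{Theorem:dimSpechtPolytopeModule} the polytope has dimension equal to $\dim V(2,1^{n-1})$. A standard Young tableaux count (or hook-length formula) gives $\dim V(2,1^{n-1}) = n-1$, so it suffices to show the Specht matrix has exactly $n$ columns; then $P(2,1^{n-1})$ is the convex hull of $n$ affinely independent points (affine independence follows because the $n$ columns have rank $n-1$ and, by Theorem~\ref{Theorem:origin}, their average is the origin, so they are not all contained in a common hyperplane through any proper affine subspace — more simply, $n$ vectors of rank $n-1$ whose only linear dependence has all-nonzero coefficients are affinely independent). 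An $(n-1)$-dimensional polytope with $n$ vertices is a simplex, which is exactly the claim.

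First I would set up the complementary words explicitly. For $\lambda = (2,1^{n-1})$ we have $\lambda^* = (n,1)$, so following Definition~\ref{SpechtModule} I take the column word $w_2 = 1^n 2$ (a word of length $n+1$) — wait, more carefully: $\lambda$ is a partition of $N := n+1$ when $\lambda = (2,1^{n-1})$ has $n$ parts, so let me instead index so that $\lambda=(2,1^{m})$ is a partition of $m+2$; the row word is then $w_1 = 1\,1\,2\,3\,\cdots\,(m+1)$ and the column word is $w_2 = 1\,2\,\cdots\,(m+1)\,1$ (so that writing $w_2$ over $w_1$ realizes the diagram $D(\lambda)$, whose first column has $m+1$ boxes and whose second column has one box). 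The rearrangements of $w_2 = 1\,2\,\cdots\,(m+1)\,1$ are determined by choosing which of the two positions holding the letter $1$ is "distinguished": that is, there are exactly $m+1$ rearrangements of $w_2$, indexed by where the repeated letter structure places things — concretely the number of distinct rearrangements is $(m+2)!/2!$...

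That last count is wrong for the column word, so here is the correct approach: it is cleaner to count the columns of the Specht matrix via the standard-tableaux dictionary recalled in the text right before Section~2. Columns of the Specht matrix for $\lambda$ are indexed by rearrangements of $w_2$, and after quotienting by the sign relations the only ones surviving correspond to the column-strict fillings of $D(\lambda^*)$ — equivalently, the vectors $v_r$ that are nonzero span $V(\lambda)$, and a spanning set of minimal size has $\dim V(\lambda)$ elements. So instead I would directly compute $\dim V(2,1^m) = m+1$ from the hook-length formula: the diagram $D(2,1^m)$ has hook lengths $m+1, 1$ in the first row and $m, m-1,\dots,1$ down the rest of the first column, giving $\dim = (m+2)!/\big((m+1)\cdot m!\cdot 1\big) = m+1$. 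Combined with the fact (which I would verify by the argument in the proof of the preceding theorem, that $S_n$ acts transitively on vertices and the only affine dependence is $\sum \tfrac1{m+1} v_r = 0$) that no vertex lies in the affine hull of the others, we get $m+1$ affinely independent vertices in an $(m+1)$... no — dimension $m+1$ means $m+2$ affinely independent points.

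**The real subtlety**, and where I'd focus, is reconciling the vertex count with the dimension: a simplex of dimension $d$ has $d+1$ vertices, so I must show the Specht matrix for $(2,1^m)$ has $m+2$ columns, and that $\dim V(2,1^m)=m+1$. The dimension is the hook-length computation above, $(m+2)!/((m+1)\cdot m!) = m+1$. For the vertex count: the column word is $w_2 = 1^{\,m+1}\,2$ (the conjugate $\lambda^* = (m+1,1)$ has a column of size $m+1$ and a column of size $1$, so $w_2$ consists of the most-common letter repeated $m+1$ times and the second-most-common letter once) — so $w_2$ has $\binom{m+2}{1} = m+2$ rearrangements, namely the $m+2$ positions for the lone letter $2$. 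Hence the Specht matrix has exactly $m+2$ columns, each (by Theorem~\ref{theorem:allvertices}) a vertex, living in a space of affine dimension $m+1$, with the unique linear dependence among them being the all-equal one by Theorem~\ref{Theorem:origin}. Therefore $P(2,1^m)$ is the convex hull of $m+2$ points spanning an $(m+1)$-dimensional affine subspace with no point in the affine hull of the rest, i.e. an $(m+1)$-simplex. Translating back to the paper's indexing (partition of $n$), this is an $(n-1)$-simplex, completing the proof. The one genuine check is that the unique minimal linear dependence among the $m+2$ columns has all nonzero coefficients — this is precisely Theorem~\ref{Theorem:origin} together with the rank being $m+1$ (so the dependence space is one-dimensional), so there is nothing more to do.
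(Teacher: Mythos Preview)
Your approach is the paper's: the polytope has dimension $n-1$ by Theorem~\ref{Theorem:dimSpechtPolytopeModule}, the column word has exactly $n$ rearrangements, and an $(n-1)$-dimensional polytope with at most $n$ vertices is an $(n-1)$-simplex. The paper stops right there, because the affine-independence step is automatic: any $d$-dimensional polytope has at least $d+1$ vertices, and if it has exactly $d+1$ they are forced to be affinely independent. Your separate argument for affine independence via Theorems~\ref{theorem:allvertices} and~\ref{Theorem:origin} is correct but superfluous.

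One arithmetic slip: the hook at box $(1,1)$ of $D(2,1^m)$ has length $m+2$, not $m+1$ (it contains the box to the right \emph{and} the $m$ boxes below). With your stated hook lengths the quotient $(m+2)!/\big((m+1)\cdot m!\big)$ equals $m+2$, not the $m+1$ you asserted. The correct product of hook lengths is $(m+2)\cdot 1\cdot m!$, which does yield $\dim V(2,1^m)=(m+2)!/\big((m+2)\,m!\big)=m+1$.
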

\begin{proof}

The Specht module $M(2,1,\ldots,1)$ of size $n$ has dimension $n-1$. By Theorem~\ref{Theorem:dimSpechtPolytopeModule} the Specht polytope has the same dimension. The partition
$(2,1\ldots,1)$ has the column word $121\cdots1$, which has $n$ rearrangements.  Hence the Specht polytope $P(2,1,\ldots,1)$ has at most $n$ vertices and dimension $n-1$, so it must be an $(n-1)$-simplex. 
\qed
\end{proof}

Correspondingly, for $\lambda=(2,1^{n-1})$, the matroid $M(\lambda)$ is the generic
matroid on $n$ elements of total rank $n-1$.  The Hilbert series for
$A^*(M)$ for any generic matroid $M$ was calculated by Feichtner and
Yuzvinsky.  It is not too difficult to modify their computation to
include information on the action of $S_n$. The dimensions of the Chow groups of this Specht matroid are in Table~\ref{table:21n-chow-dims}.

\begin{table}[!htb]
\normalsize
\begin{center}
$\begin{array}{c|rrrrr}
n \setminus k & 0 & 1 & 2 & 3 & 4 \\
\hline
2 & 1 & & & & \\
3 & 1 & 1 & & & \\
4 & 1 & 7 & 1 & & \\
5 & 1 & 21 & 21 & 1 & \\
6 & 1 & 51 & 161 & 51 & 1 \\
\end{array}.$
\caption{Dimensions of $A^k(M(2,1^{n-1}))$.}
\label{table:21n-chow-dims}
\end{center}
\end{table}
We make the following conjecture with the help of the \emph{OEIS} \cite{OEIS}.

\begin{conjecture}
The dimension of $A^k(M(2,1^{n-1}))$ is the number of permutations in $S_n$ with no fixed points and $k+1$ excedances (\emph{OEIS} A046739).
\end{conjecture}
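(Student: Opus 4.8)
We propose to prove this by computing the full Hilbert series of the Chow ring directly from the Feichtner--Yuzvinsky basis (Theorem~\ref{Theorem:basis_for_chow_ring}) and then matching it to the generating function of the derangement polynomials. Recall from the discussion above that $M(2,1^{n-1})$ is the uniform matroid $U_{n-1,n}$, the generic rank-$(n-1)$ matroid on $n$ elements. Its proper flats are exactly the subsets of $\{1,\dots,n\}$ of size at most $n-2$, each of rank equal to its size, and there is one further flat, the entire ground set $E$, of rank $n-1$; in particular no $(n-1)$-element subset is a flat. By Theorem~\ref{Theorem:basis_for_chow_ring}, a basis element of $A^k(M(2,1^{n-1}))$ is either $1$ (when $k=0$) or a monomial determined by a chain of nonempty flats $F_1\subsetneq\cdots\subsetneq F_m$ together with exponents $d_i$ satisfying $1\le d_i\le\mathrm{rank}(F_i)-\mathrm{rank}(F_{i-1})-1$ and $\sum_i d_i=k$, where $\mathrm{rank}(F_0)=0$ by convention.

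The first step is to turn such a chain into an ordered set partition of $\{1,\dots,n\}$. Writing $B_i=F_i\setminus F_{i-1}$ and $R=\{1,\dots,n\}\setminus F_m$, a chain consisting only of proper flats corresponds bijectively to an ordered set partition $(B_1,\dots,B_m,R)$ with $|R|\ge 2$, while a chain whose largest member is $E$ corresponds to such a partition with $|R|\ge 3$, the closing step up to $E$ having rank gap $|R|-1$. The generating function (in $t$) for the admissible exponents on a step with rank gap $g$ is $t+t^2+\cdots+t^{g-1}$, so the product rule for exponential generating functions --- a chain is a sequence of blocks followed either by a remainder or by a terminal step to $E$ --- yields
$$\sum_{n\ge 0}\left(\sum_{k}\dim A^k(M(2,1^{n-1}))\,t^k\right)\frac{x^n}{n!} \;=\; \frac{\bigl(e^x-1-x\bigr)+c(x)}{1-b(x)},$$
where $b(x)=\sum_{s\ge 2}\bigl(t+\cdots+t^{s-1}\bigr)\frac{x^s}{s!}$ is the block series, $e^x-1-x$ records the remainder $R$ (and, through the empty chain, the unit class $1\in A^0$ for each $n\ge 2$), and $c(x)=\sum_{r\ge 3}\bigl(t+\cdots+t^{r-2}\bigr)\frac{x^r}{r!}$ records chains terminating at $E$.

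The second step is algebraic. One evaluates
$$b(x)=\frac{t\,e^{x}-e^{tx}+(1-t)}{1-t}\,,\qquad \bigl(e^x-1-x\bigr)+c(x)=\frac{b(x)}{t}\,,$$
so the right-hand side of the displayed formula collapses to $\frac{b(x)/t}{1-b(x)}=\frac1t\left(\frac1{1-b(x)}-1\right)$. It then suffices to identify $\frac1{1-b(x)}=\frac{1-t}{e^{tx}-t\,e^{x}}$ with the exponential generating function $\sum_{n\ge 0}d_n(t)\,\frac{x^n}{n!}$ of the derangement polynomials $d_n(t)=\sum_k D(n,k)\,t^k$, where $D(n,k)$ is the number of derangements of $\{1,\dots,n\}$ with exactly $k$ excedances; this is immediate from the classical Eulerian generating function $\sum_n A_n(t)\,\frac{x^n}{n!}=\frac{t-1}{t-e^{(t-1)x}}$ together with the fixed-point splitting $A_n(t)=\sum_j\binom{n}{j} d_{n-j}(t)$, which give $\sum_n A_n(t)\frac{x^n}{n!}=e^x\sum_n d_n(t)\frac{x^n}{n!}$. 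Putting the pieces together, $\sum_{n}\bigl(\sum_k\dim A^k(M(2,1^{n-1}))\,t^k\bigr)\frac{x^n}{n!}=\frac1t\sum_{n\ge 1}d_n(t)\frac{x^n}{n!}$, hence the Hilbert series of $A^*(M(2,1^{n-1}))$ equals $d_n(t)/t$ for every $n\ge 2$, and comparing coefficients of $t^k$ shows $\dim A^k(M(2,1^{n-1}))$ is the number $D(n,k+1)$ of derangements of $\{1,\dots,n\}$ with $k+1$ excedances --- which is \emph{OEIS} A046739.

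I expect the main obstacle to be the combinatorial bookkeeping near the top of the lattice of flats: since $E=\{1,\dots,n\}$ has rank $n-1$ rather than $n$, this asymmetry has to be propagated correctly through the dictionary between chains and ordered set partitions, through the distinction between the cases $|R|\ge 2$ and $|R|\ge 3$, and into the definition of the correction series $c(x)$. After that, computing $b(x)$ in closed form and recognizing $1/(1-b(x))$ as the derangement exponential generating function are routine, granted the classical Eulerian identity.
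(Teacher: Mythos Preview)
The paper offers no proof of this statement: it is stated as a conjecture, supported only by the data in Table~\ref{table:21n-chow-dims} and by a computer check (for $n\le 6$) of a finer orbit-structure refinement. So there is nothing in the paper to compare your argument against.

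That said, your approach is sound and, as far as I can see, actually proves the conjecture. A few checks and remarks:

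\begin{itemize}
\item Your identification of the matroid as the uniform matroid $U_{n-1,n}$ is correct and matches the paper's description of $M(2,1,\ldots,1)$ as ``the generic matroid on $n$ elements of total rank $n-1$.'' (The paper's notation $(2,1^{n-1})$ is slightly off---the partition of $n$ meant is $(2,1^{n-2})$---but your reading agrees with the tables.)
\item The two-case split (chains of proper flats versus chains terminating at $E$) and the resulting constraints $|R|\ge 2$ and $|R|\ge 3$ are exactly right: the anomaly comes from $\mathrm{rank}(E)=n-1$ rather than $n$, so the last rank gap is $|R|-1$ instead of $|R|$.
\item The closed forms check: with $b(x)=\dfrac{te^x-e^{tx}+(1-t)}{1-t}$ one computes
\[
(e^x-1-x)+c(x)=\frac{t(e^x-1-x)-(e^{tx}-1-tx)}{t(1-t)}=\frac{b(x)}{t},
\]
and then
\[
\frac{1}{1-b(x)}=\frac{1-t}{e^{tx}-te^{x}}=e^{-x}\cdot\frac{t-1}{t-e^{(t-1)x}}=e^{-x}\sum_{n\ge 0}A_n(t)\frac{x^n}{n!}=\sum_{n\ge 0}d_n(t)\frac{x^n}{n!},
\]
using the fixed-point decomposition $A_n(t)=\sum_j\binom{n}{j}d_{n-j}(t)$. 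This gives $\mathrm{Hilb}\bigl(A^*(M)\bigr)=d_n(t)/t$ for $n\ge 2$, hence $\dim A^k=D(n,k+1)$.
\end{itemize}

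The paper even remarks that Feichtner and Yuzvinsky had already computed these Hilbert series for generic matroids; what was missing was precisely the identification with the derangement polynomials, which your generating-function manipulation supplies. The only place to be careful in a write-up is the ``top of the lattice'' bookkeeping you flag yourself, and you have handled it correctly.
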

A \emph{fixed point} of a permutation $\sigma\in S_n$ is an index $i$ such that $\sigma(i)=i$, and an \emph{excedance} is an index $i$
such that $\sigma(i)>i$.

Consider the cyclic subgroup $C_n\subset S_n$
generated by the $n$-cycle $c=(1 2 \cdots n)$.  Note that conjugation by $c$ preserves the number of fixed points and the
number of excedances, so, for any fixed $n$ and $k$, $C_n$ acts on the set of permutations of $S_n$ with no fixed points
and $k+1$ excedances.  Also, recall from the end of Section~\ref{section:chow} that $S_n$ acts on the Feichtner--Yuzvinsky
basis of $A^k(M(2,1^{n-1}))$, and $C_n$ acts on this basis by restricting
the action of $S_n$.  A possible refinement of the conjecture above is that the orbit structures of these two actions coincide; this refinement has been checked for $n\leq 6$.  For example, for $n=6$
and $k=2$, both actions have 1 orbit of size 1, 2 orbits of size 2, 4 orbits of size 3, and 24 orbits of size 6.

We switch our attention to  partitions of the form $(n-1,1)$. 
We start by describing the Specht matrices coming from this partition. 
\begin{proposition}
Each column of a Specht matrix for the partition $(n-1,1)$ is of the form $e_i-e_j$, where $e_i$ is a standard unit vector in $\mathbb{R}^n$, and, for $n\geq 4$, $e_i-e_j$ and $e_j-e_i$ are both columns of the Specht matrix for any $i,j$ with $1\leq i<j\leq n$.
\end{proposition}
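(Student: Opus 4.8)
The plan is to read the columns of the Specht matrix off directly from Young's character using Proposition~\ref{proposition:young}. Since the conjugate of $(n-1,1)$ is $(2,1^{n-2})$, I would take the row word to be $w_1=1^{n-1}2$ and the column word to be a rearrangement of $1^2\,2\,3\cdots(n-1)$; shifting the single $2$ of $w_1$ and one copy of $1$ of the column word to the front produces a genuinely complementary pair, so Proposition~\ref{proposition:young} applies in its sharp form. Index the rearrangements of $w_1$ by the position of the letter $2$, writing $r_1^{(p)}$ for the one with $2$ in position $p$; this identifies $V(w_1)$ with $\mathbb{R}^n$ via $r_1^{(p)}\leftrightarrow e_p$. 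Each column of the Specht matrix is then indexed by a rearrangement $r$ of the column word; its two letters $1$ occupy two positions, say $i<j$.

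First I would pin down the support of the column indexed by $r$. When $p\notin\{i,j\}$, stacking $r$ over $r_1^{(p)}$ puts the letter $1$ over the letter $1$ in both of positions $i$ and $j$, a repeated column, so $\mathrm{Y}(r_1^{(p)},r)=0$ by Proposition~\ref{proposition:young}. When $p\in\{i,j\}$, all $n$ stacked columns are distinct: position $p$ carries $1$ over $2$, the remaining $1$-position carries $1$ over $1$, and the other $n-2$ positions carry the pairwise distinct letters $2,\dots,n-1$ over $1$; hence the entry is $\pm1$. So the column has the form $a\,e_i+b\,e_j$ with $a,b\in\{\pm1\}$.

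Both remaining points — that $a=-b$, and that for $n\ge4$ both signs occur — I would derive from the sign-equivariance of Young's character, $\mathrm{Y}(\rho\cdot r_1,r_2)=\mathrm{sign}(\rho)\,\mathrm{Y}(r_1,\rho^{-1}\cdot r_2)$, which drops out of Definition~\ref{definition:YoungsChar} upon left-translating the summation index by $\rho$. Taking $\rho=(i\,j)$: this transposition fixes $r$, since it only interchanges the two $1$'s, and it sends $r_1^{(i)}$ to $r_1^{(j)}$, so $\mathrm{Y}(r_1^{(j)},r)=-\mathrm{Y}(r_1^{(i)},r)$; thus $b=-a$ and the column is $e_i-e_j$ or $e_j-e_i$. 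For $n\ge4$ and fixed $i<j$, choose distinct positions $p,q\notin\{i,j\}$ — possible exactly because $n-2\ge2$ — and put $r'=(p\,q)\cdot r$. Then $r'$ still has its $1$'s in positions $i$ and $j$, while $(p\,q)$ fixes both $r_1^{(i)}$ and $r_1^{(j)}$, so the identity gives $\mathrm{Y}(r_1^{(i)},r')=-\mathrm{Y}(r_1^{(i)},r)$ and likewise for $j$; hence the column of $r'$ is the negative of the column of $r$. Since these are the two vectors $\pm(e_i-e_j)$, both appear as columns.

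The only real obstacle is bookkeeping the group-action conventions so that the equivariance identity and the assignments $r_1^{(i)}\mapsto r_1^{(j)}$ and $r\mapsto r'$ carry the intended signs; once a single convention is fixed (say $(\sigma\cdot w)[k]=w[\sigma^{-1}(k)]$, cross-checked against Example~\ref{action_on_(2,2)}) this is mechanical. One can sidestep it for the first assertion: $V(w_1)\cong\mathbb{R}^n$ is the coordinate permutation representation, whose only $(n-1)$-dimensional $S_n$-subrepresentation (for $n\ge3$) is the zero-sum hyperplane, and the Specht module $V(n-1,1)$ is an $(n-1)$-dimensional irreducible subrepresentation (Theorem~\ref{theorem:SpechtIrred}), visibly not the trivial one, hence equals that hyperplane; so every column has coordinate sum $0$, which together with the support computation forces the form $e_i-e_j$.
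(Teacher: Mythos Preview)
Your proof is correct and follows essentially the same route as the paper: determine the support via repeated columns, show the two nonzero entries have opposite sign by applying the transposition $(i\,j)$ (equivalently, the paper observes the two witnessing permutations differ by a transposition), and obtain the opposite-sign column for $n\ge4$ by swapping two positions outside $\{i,j\}$ (the paper specifically swaps the letters $2$ and $3$). Your explicit statement and use of the sign-equivariance identity, and the alternative representation-theoretic argument forcing the columns into the zero-sum hyperplane, are nice additions that the paper leaves implicit or omits.
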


\begin{proof}
A choice of row and column words for the partition $(n-1,1)$ is $w_1 = 11\cdots112$ and $w_2 = 123\cdots(n-1)1$, respectively.  Let $r_2$ be a rearrangement of the column word $w_2$. In the column for $r_2$, there are exactly two nonzero entries, namely the entries corresponding to
the rearrangements of $w_1$ in which the $2$ is in the same position as one of the $1$'s in $r_2$. Let us call these rearrangements $r_1$ and $r_1'$. If $\sigma$ is a permutation such that $\sigma w_1 = r_1$ and $\sigma w_2 = r_2$, and $\sigma'$ is a permutation such that $\sigma' w_1 = r'_1$ and $\sigma' w_2 = r_2$, then $\sigma$ and $\sigma'$ differ by a transposition, so we have $Y(r_1,r_2) = -Y(r'_1, r_2)$.

Given $i$ and $j$, to obtain $e_i-e_j$ and $e_j-e_i$, use the column corresponding to some rearrangement $r$ of $w_2$ with the $1$'s in the $i$-th and $j$-th positions and the column corresponding to rearrangement $r'$ obtained from switching the positions of the $2$ and $3$ in $r$.
\qed
\end{proof}

In what follows, we will denote a vertex of a Specht polytope $P(n-1,1)$ by $v_{ij}$ if the
corresponding column in the Specht matrix has $1$ in the $i$-th position and $-1$ in the
$j$-th position.
 
It turns out that the Specht polytopes $P(n-1,1)$ have already been
studied under the name of root polytopes by Ardila, Beck, Hosten,
Pfeifle, and Seashore~\cite{ABHPS}.

\begin{definition}
A \emph{root polytope} $P_{A_n}$ (of type $A_n$) is the convex hull of the points $e_i - e_j$ for $1 \leq i \neq j \leq n$ where $i, j \in \{1,\ldots,n\}$.  
\end{definition}

(Note this definition is different from the definition of Gelfand, Graev, and Postnikov~\cite{GGP}, which uses only the positive roots and zero.)

For this class of polytopes, Ardila, Beck, Hosten, Pfeifle and Seashore~\cite[Proposition 8]{ABHPS} gave the following description of their edges and facets. 

\begin{theorem} \label{theorem:root_polytope}
The polytope $P_{A_n}\in \mathbb{R}^{n+1}$ has dimension $n$ and is contained in the hyperplane $H_0 = \{x \in \mathbb{R}^{n+1} : \sum_{i=0}^n x_i = 0\}$. It has $(n-1)n(n+1)$ edges, which are of the form $v_{ij}v_{ik}$ and $v_{ik}v_{jk}$ for $i, j, k$ distinct. It has $2^{n+1}-2$ facets, which can be labelled by the proper subsets $S$ of $[0, n] := \{0, 1,\ldots, n\}$. The facet $F_S$ is defined by the hyperplane 
$$
H_S := \left\lbrace x\in \mathbb{R}^{n+1} : \sum_{i\in S} x_i= 1\right\rbrace,
$$
and it is congruent to the product of simplices $\Delta_S \times \Delta_T$ , where $T = [0, n] - S$.
\end{theorem}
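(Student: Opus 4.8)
The plan is to verify the four assertions in turn. Throughout I write $v_{ij}=e_i-e_j$, and for a proper nonempty $S\subsetneq[0,n]$ with complement $T=[0,n]\setminus S$ I put $\ell_S(x)=\sum_{i\in S}x_i$. Every $v_{ij}$ has coordinate sum $0$, so $P_{A_n}\subseteq H_0$; and since $0=\tfrac12(v_{ij}+v_{ji})$ lies in $P_{A_n}$, its affine hull is a linear subspace, necessarily $\operatorname{span}\{v_{ij}\}=H_0$, which has dimension $n$. Hence $\dim P_{A_n}=n$.

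For the facets, evaluate $\ell_S$ on a vertex: $\ell_S(v_{ij})=[i\in S]-[j\in S]\in\{-1,0,1\}$, with value $1$ exactly when $i\in S$ and $j\in T$. Thus $\ell_S\le1$ is valid and $F_S:=P_{A_n}\cap H_S=\operatorname{conv}\{v_{ij}:i\in S,\ j\in T\}$; each $v_{ij}$ is a genuine vertex of $P_{A_n}$, being the unique maximizer of $x\mapsto x_i-x_j$, so these are precisely the vertices of $F_S$. A transportation argument --- a nonnegative matrix with prescribed row sums $a$ and column sums $b$ exists as soon as the totals agree, e.g.\ $\lambda_{ij}=a_ib_j$ --- identifies $F_S$ with $\{a-b:a\in\Delta_S,\ b\in\Delta_T\}$, where $\Delta_S=\operatorname{conv}\{e_i:i\in S\}$ and likewise $\Delta_T$. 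Since $\Delta_S\subseteq\mathbb{R}^S$ and $-\Delta_T\subseteq\mathbb{R}^T$ occupy orthogonal coordinate subspaces, the affine map $(a,b)\mapsto a-b$ is an isometry of $\Delta_S\times\Delta_T$ onto $F_S$; in particular $F_S$ is congruent to $\Delta_S\times\Delta_T$ and has dimension $(|S|-1)+(|T|-1)=n-1$, so it is a facet. Distinct subsets give distinct facets (one recovers $S$ as the set of coordinates taking value $+1$ on some vertex of $F_S$), producing $2^{n+1}-2$ facets.

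To see there are no others, I would prove $P_{A_n}=\{x\in H_0:\ell_S(x)\le1\text{ for every proper nonempty }S\}$; the inclusion ``$\subseteq$'' is the previous paragraph. Conversely, given $x$ in the right-hand side with $x\neq0$, set $S_0=\{i:x_i>0\}$, which is proper and nonempty because $\sum_ix_i=0$, so $m:=\ell_{S_0}(x)=\tfrac12\sum_i|x_i|\le1$; then $\lambda_{ij}=x_i^+x_j^-/m$ writes $x$ as $m$ times a convex combination of vertices plus $(1-m)$ times the origin, so $x\in P_{A_n}$. Every bounding inequality here is facet-defining, so the $F_S$ are exactly the facets. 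For the edges: for distinct $i,j,k$ the face $F_{\{i\}}\cap F_{[0,n]\setminus\{j,k\}}$ of $P_{A_n}$ has vertex set exactly $\{v_{ij},v_{ik}\}$ (a vertex $v_{ab}$ lies in $F_{\{i\}}$ iff $a=i$, and then in $F_{[0,n]\setminus\{j,k\}}$ iff $b\in\{j,k\}$), hence is the edge $v_{ij}v_{ik}$; the central symmetry $x\mapsto-x$ (which maps $P_{A_n}$ to itself and $v_{ab}$ to $v_{ba}$) then makes each $v_{ik}v_{jk}$ an edge as well. No other pair of vertices is adjacent, since the midpoint of $v_{ij}v_{ji}$ is $0=\tfrac12(v_{pq}+v_{qp})$ for any $\{p,q\}\neq\{i,j\}$, the midpoint of $v_{ij}v_{jk}$ ($i,j,k$ distinct) equals $\tfrac34v_{ik}+\tfrac14v_{ki}$, and the midpoint of $v_{ij}v_{kl}$ ($i,j,k,l$ distinct) equals $\tfrac12(v_{il}+v_{kj})$ --- each time a convex combination of other vertices. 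Counting the two disjoint families, each indexed by a shared index ($n+1$ choices) and an unordered pair of remaining indices ($\binom n2$ choices), yields $2(n+1)\binom n2=(n-1)n(n+1)$ edges.

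The main obstacle is the facet-exhaustion step: showing that the inequalities $\ell_S\le1$ already cut out all of $P_{A_n}$, which carries the real combinatorial content and also underpins (via the same transportation bookkeeping) the congruence $F_S\cong\Delta_S\times\Delta_T$. The remaining verifications are routine, but one should keep an eye on the small cases $n\le2$, where several of the non-adjacency arguments --- which rely on having spare indices available --- degenerate or must be checked by hand.
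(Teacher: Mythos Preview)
Your proof is correct and considerably more complete than what the paper offers. The paper does not actually prove this theorem: it attributes the result to Ardila, Beck, Ho\c{s}ten, Pfeifle, and Seashore and supplies only a one-line hint, namely the identity $f(v_{ij})+f(v_{kl})=f(v_{il})+f(v_{kj})$ for any linear functional $f$, which prevents $v_{ij}v_{kl}$ (and, by a similar remark, $v_{ij}v_{jk}$) from being an edge. Your midpoint arguments $\tfrac12(v_{ij}+v_{kl})=\tfrac12(v_{il}+v_{kj})$ and $\tfrac12(v_{ij}+v_{jk})=\tfrac34 v_{ik}+\tfrac14 v_{ki}$ are exactly the same content phrased on the polytope side rather than the dual side, so on the non-adjacency portion the two approaches coincide.

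Where you go beyond the paper is in the constructive direction: you exhibit each claimed edge as an intersection $F_{\{i\}}\cap F_{[0,n]\setminus\{j,k\}}$ of two explicit facets, and you establish the full $H$-description $P_{A_n}=\{x\in H_0:\ell_S(x)\le 1\}$ via the outer-product transportation argument $\lambda_{ij}=x_i^+x_j^-/m$. This simultaneously proves facet exhaustion and, through the orthogonal splitting $\mathbb{R}^{n+1}=\mathbb{R}^S\oplus\mathbb{R}^T$, the congruence $F_S\cong\Delta_S\times\Delta_T$. The paper sketches none of this, merely asserting that ``a similar argument works \ldots\ for determining the facets.'' Your caveat about $n\le 2$ is well placed: for $n=1$ the polytope is a segment, so the edge count $(n-1)n(n+1)=0$ and the non-adjacency arguments requiring spare indices break down; the theorem as stated should be read for $n\ge 2$ (or $n\ge 3$ if one wants all three midpoint obstructions to be nonvacuous).
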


The main idea in the proof of this theorem is that, if $f$ is a linear functional and $i,j,k,l$ are all different, then $f(v_{ij})+f(v_{kl})=f(v_{il})+f(v_{kj})$.  Hence $f$ cannot be maximized at only one of the line segments $v_{ij}v_{kl}$ or $v_{il}v_{kj}$, so neither can be an edge.  A similar argument works
both for ruling out pairs of vertices of the form $v_{ij}$ and $v_{jk}$ as edges and for determining the facets.

Ardila, Beck, Hosten, Pfeifle and Seashore also gave the following description of the lattice points inside $P_{A_n}$.  
\begin{theorem}
The only lattice points in $P_{A_n}$ are its vertices and the origin.
\end{theorem}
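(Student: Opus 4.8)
The plan is to show that any lattice point $p$ in $P_{A_n}$ must be either the origin or a vertex $v_{ij} = e_i - e_j$. First I would use the description of the facets from Theorem~\ref{theorem:root_polytope}: the polytope $P_{A_n}$ lies in the hyperplane $H_0 = \{\sum_{i=0}^n x_i = 0\}$ and is cut out by the inequalities $\sum_{i \in S} x_i \leq 1$ for all proper nonempty subsets $S \subseteq [0,n]$. In particular, taking $S = \{i\}$ for each single index gives $x_i \leq 1$ for all $i$, and taking $S = [0,n] \setminus \{i\}$ together with $\sum x_i = 0$ gives $x_i \geq -1$ for all $i$. So every lattice point $p$ has all coordinates in $\{-1, 0, 1\}$, and the coordinates sum to zero; hence $p$ has an equal number $m$ of $+1$'s and $-1$'s, and it suffices to rule out $m \geq 2$.

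Next, I would rule out $m \geq 2$ using the finer facet inequalities. Suppose $p$ has $+1$ in positions $i_1, \dots, i_m$ and $-1$ in positions $j_1, \dots, j_m$ with $m \geq 2$. Choose $S = \{i_1, i_2\}$, a proper nonempty subset of $[0,n]$ (proper since $n \geq 2$ means $[0,n]$ has at least three elements; nonempty obviously). Then $\sum_{i \in S} p_i = p_{i_1} + p_{i_2} = 2 > 1$, contradicting the facet inequality $\sum_{i \in S} x_i \leq 1$. Hence $m \leq 1$, so $p$ is either the origin (if $m = 0$) or some $e_i - e_j$ with $i \neq j$ (if $m = 1$), which is a vertex. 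This completes the argument; the only point needing care is the bookkeeping that $[0,n]$ has at least three elements so that two-element subsets are proper, which holds for $n \geq 2$ (and the cases $n \leq 1$ are degenerate anyway).

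The main obstacle, such as it is, is simply confirming that the facet description in Theorem~\ref{theorem:root_polytope} does indeed yield the complete set of defining inequalities of $P_{A_n}$ inside $H_0$ — that is, that every facet is one of the $F_S$ — so that ``lattice point in $P_{A_n}$'' really translates into ``integer solution of $\sum x_i = 0$ together with $\sum_{i \in S} x_i \leq 1$ for all proper nonempty $S$.'' This is exactly what the cited result of Ardila, Beck, Hosten, Pfeifle, and Seashore provides, so once we invoke it the proof is a short combinatorial deduction. (One could alternatively avoid citing the full facet list: the single-index inequalities $|x_i| \leq 1$ already follow from $P_{A_n}$ being the convex hull of the $\pm(e_i - e_j)$, since each vertex satisfies them; then $\sum p_i = 0$ forces equal counts of $\pm 1$, and one rules out $m \geq 2$ by exhibiting a linear functional — e.g. $x \mapsto x_{i_1} + x_{i_2}$ — on which $p$ exceeds the value attained at every vertex, giving a contradiction with $p$ being a convex combination of vertices.)
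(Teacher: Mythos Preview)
Your proof is correct but takes a different route from the paper's. The paper argues via a norm bound: every vertex $e_i - e_j$ has Euclidean norm $\sqrt{2}$, so $P_{A_n}$ is contained in the closed ball of radius $\sqrt{2}$ about the origin; the only lattice points in that ball are $0$, $\pm e_i$, and $\pm e_i \pm e_j$, and intersecting with the hyperplane $H_0$ leaves only the origin and the vectors $e_i - e_j$. Your argument instead works with the linear facet inequalities $\sum_{i\in S} x_i \le 1$ from the Ardila--Beck--Ho\c{s}ten--Pfeifle--Seashore description, first pinning each coordinate to $\{-1,0,1\}$ and then excluding two or more $+1$'s via a two-element $S$. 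Both arguments are short; the paper's avoids any appeal to the facet list, while yours is more combinatorial and, as you note in your final parenthetical, can also be made self-contained by observing directly that each functional $x\mapsto \sum_{i\in S} x_i$ is bounded by $1$ on every vertex and hence on the hull. One small remark: in the direction you actually use (lattice point in $P_{A_n}$ $\Rightarrow$ satisfies the inequalities), you only need that the inequalities are \emph{valid} on $P_{A_n}$, not that they give the complete facet description, so your stated ``main obstacle'' is in fact no obstacle at all.
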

\begin{proof}
A polytope $P_{A_n}$ is contained in an $n-1$-sphere with radius $\sqrt{2}$ and center $0$. The only lattice points in this sphere are  $\pm e_i$ and $\pm e_i\pm e_j$ for $1\leq i,j \leq n$. Since $P_{A_n}$ is contained in the hyperplane $H_0 = \{x \in \mathbb{R}^{n+1} : \sum_{i=0}^n x_i = 0\}$, the only lattice points contained in $P_{A_n}$ are the vertices and the origin.    
\qed
\end{proof}

The matroid $M(n-1,1)$ is the matroid for the
braid arrangement of $S_n$.  This was one of the original motivations
of DeConcini and Procesi~\cite{DP} for studying Chow rings of representable matroids, since, in this case, $A^*(M)$ is
actually the cohomology ring for the moduli space
$\overline{\mathcal{M}_{0,n}}$ of $n$ marked points on the complex
projective line, which, as DeConcini and Procesi had earlier observed~\cite{DPWond}, can be realized as the successive blowup of
$\mathbb{P}^{n-2}$ at all the subspaces in the intersection lattice
of the braid arrangement.  For more information on $\overline{\mathcal{M}_{0,n}}$ and in
particular equations defining natural projective embeddings, see
the article in this volume by Monin and Rana~\cite{MR}.

Note that this matroid is also the graphical matroid on the complete graph
$K_n$ on $n$ vertices.  By the usual translation between graphical matroids and
graphs, vectors in the matroid correspond to (directed) edges of the graph, and a basis
of the matroid corresponds to a spanning tree for the graph.  To be precise,
we can label the vertices of $K_n$ by $\{1,\ldots,n\}$, and, with this labeling, the edge from $i$ to $j$
corresponds to the vector $e_j - e_i$.  If we start with the column word $w_2=1123\cdots n$, then
the vector $e_j - e_i$ is $v_r$ for the rearrangement
$r$ where a $1$ appears in the $i$-th and $j$-th positions and the remaining letters $2,\ldots, n$ appear
in order.  In terms of the usual presentation of the Specht matroid in terms of fillings, this vector corresponds to the filling
with an $i$ and a $j$ in the first column and the remaining integers in order along the first row.  The usual basis of the Specht
module given by Standard Young Tableaux corresponds to the tree with edges between
vertex $1$ and vertex $j$ for every $j>1$, and declaring $i$ to be the ``smallest''
letter in our filling alphabet gives a similar tree with vertex $i$ having degree $n-1$.  Of course, $K_n$
has many other types of spanning trees, so the Specht matroid has many bases that look completely different from this standard basis!

We will finish this section with a picture of one of these polytopes. For the partition $(3,1)$, the Specht polytope naturally lives in a four-dimensional space, but as it is a three-dimensional object, it can be drawn in a $3$-space. It is shown in Figure~\ref{figure:SpechtPolytope(3,1)}.

\begin{figure}
\centering
\includegraphics[scale = .8]{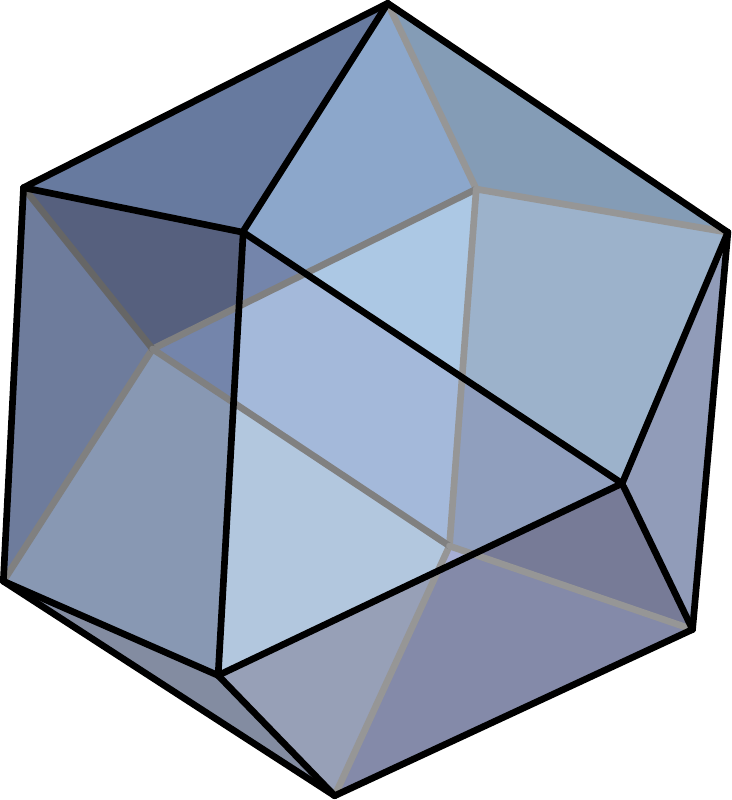}
\caption{Specht polytope $(3,1)$}
\label{figure:SpechtPolytope(3,1)}
\end{figure}

\section{Matroidification} \label{section:upgrading}

Many constructions in the representation theory of $S_n$ and their associated dimensions actually come from constructions involving tensor products or $\Hom$ spaces of Specht modules.  Since Specht modules have a distinguished symmetric spanning set, so do their tensor products, and hence we can extend our definitions of Specht matroids and Specht polytopes to these other contexts.
In this section, we build matroids and polytopes for three famous collections of numbers arising in combinatorics and representation theory: Kronecker coefficients, Littlewood-Richardson coefficients, and plethysm coefficients.
\begin{definition}
If $\lambda, \mu, \nu$ are partitions of $n$, the \emph{Kronecker coefficient} $g_{\lambda \mu\nu}$ is defined to be the dimension of the $S_n$-invariants of the tensor product
$$
g_{\lambda \mu\nu} = \dim \; (\mathrm{Specht}(\lambda) \otimes \mathrm{Specht}(\mu) \otimes \mathrm{Specht}(\nu))^{S_n}.
$$
\end{definition}

In Definitions~\ref{def:kron-matrix}, \ref{def:lr-matrix}, and\ref{def:pleth-matrix}, $x_1$ and $x_2$ 
denote a pair of complementary words of length $n$ that correspond via Theorem 
\ref{theorem:diagrams} to the partition $\lambda$.  Similarly, $y_1$ and $y_2$ correspond to $\mu$,
and $w_1$ and $w_2$ to $\nu$.

\begin{definition}
\label{def:kron-matrix}
The \emph{Kronecker matrix} has rows indexed by the product
$$
\{ \mbox{ rearrangements of $x_1$ } \} \times \{ \mbox{ rearrangements of $y_1$ } \} \times \{ \mbox{ rearrangements of $w_1$ } \} 
$$
and columns indexed by the product
$$
\{ \mbox{ rearrangements of $x_2$ } \} \times \{ \mbox{ rearrangements of $y_2$ } \} \times \{ \mbox{ rearrangements of $w_2$ } \}, 
$$
where the $((p, q, r), (s, t, u))$ entry is given by the formula 
$$
\sum_{\sigma \in S_n} \mathrm{Y}(\sigma s,p) \cdot \mathrm{Y}(\sigma t, q) \cdot \mathrm{Y}(\sigma u, r).
$$
Its columns define the \emph{Kronecker matroid}.
The convex hull of its columns defines the \emph{Kronecker polytope}.  
\end{definition}

\begin{theorem} \label{theorem:kronecker}
The dimension of the Kronecker polytope is the Kronecker coefficient $g_{\lambda \mu\nu}$.
\end{theorem}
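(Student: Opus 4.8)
The plan is to recognize the Kronecker matrix as (a scalar multiple of) the matrix of a concrete equivariant linear map whose image is exactly the space of $S_n$-invariants of $V(\lambda)\otimes V(\mu)\otimes V(\nu)$, which identifies its rank with $g_{\lambda\mu\nu}$, and then to deduce that the Kronecker polytope has the same dimension as that image by the origin-containment argument already used for ordinary Specht polytopes in Theorem~\ref{Theorem:origin}.

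For the first step, let $\varphi_\lambda\colon V(x_2)\to V(x_1)$ be the linear map whose matrix is the Specht matrix $\varphi(x_1,x_2)$, so that $\varphi_\lambda(e_s)=\sum_p\mathrm{Y}(p,s)\,e_p$ and $\im\varphi_\lambda=V(\lambda)$; by Proposition~\ref{proposition:young} and the transformation rule $\mathrm{Y}(\tau r_1,\tau r_2)=\mathrm{sign}(\tau)\,\mathrm{Y}(r_1,r_2)$, which is immediate from Definition~\ref{definition:YoungsChar}, the map $\varphi_\lambda$ is a map of representations $V(x_2)\otimes\varepsilon\to V(x_1)$, i.e.\ $\varphi_\lambda(e_{\sigma s})=\mathrm{sign}(\sigma)\,\sigma\cdot\varphi_\lambda(e_s)$. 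Define $\varphi_\mu,\varphi_\nu$ analogously and put $v_{s,t,u}:=\varphi_\lambda(e_s)\otimes\varphi_\mu(e_t)\otimes\varphi_\nu(e_u)\in V(\lambda)\otimes V(\mu)\otimes V(\nu)$. A direct computation with the definition of the Kronecker matrix, using that $\sum_p\mathrm{Y}(\sigma s,p)\,e_p=\varphi_\lambda(e_{\sigma s})$ (and the analogues for $\mu,\nu$) and then substituting the three equivariance relations, shows that its $(s,t,u)$-th column equals
\[
n!\cdot R\big(v_{s,t,u}\big),\qquad R:=\frac{1}{n!}\sum_{\sigma\in S_n}\sigma,
\]
the Reynolds projection of $V(\lambda)\otimes V(\mu)\otimes V(\nu)$ onto its $S_n$-invariants. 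Since the $\varphi_\lambda(e_s)$ are precisely the columns of the Specht matrix and so span $V(\lambda)$, and likewise for $\mu$ and $\nu$, the $v_{s,t,u}$ span $V(\lambda)\otimes V(\mu)\otimes V(\nu)$; hence the columns of the Kronecker matrix span $R\bigl(V(\lambda)\otimes V(\mu)\otimes V(\nu)\bigr)=(V(\lambda)\otimes V(\mu)\otimes V(\nu))^{S_n}$. Therefore the rank of the Kronecker matrix equals $\dim(V(\lambda)\otimes V(\mu)\otimes V(\nu))^{S_n}=g_{\lambda\mu\nu}$.

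For the second step, the Kronecker polytope is the convex hull of the columns of the Kronecker matrix, so its dimension is that of their affine span, which coincides with that of their linear span as soon as the origin lies in that affine span; it suffices to write the origin as a convex combination of the columns, for which it is enough that the columns sum to zero. Summing the column formula over all $(s,t,u)$ gives $n!\,R\bigl((\sum_s\varphi_\lambda(e_s))\otimes(\sum_t\varphi_\mu(e_t))\otimes(\sum_u\varphi_\nu(e_u))\bigr)$, and $\sum_s\varphi_\lambda(e_s)$ has $p$-th coordinate the $p$-th row sum of the Specht matrix, namely $\sum_{\tau\colon\tau x_1=p}\mathrm{sign}(\tau)$; this is a signed sum over a coset of the Young subgroup $\mathrm{Stab}(x_1)$ and hence vanishes whenever $\lambda\ne(1,\dots,1)$, exactly as in the proof of Theorem~\ref{Theorem:origin}. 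So, provided at least one of $\lambda,\mu,\nu$ differs from $(1,\dots,1)$, the columns sum to zero, the origin lies in the Kronecker polytope, and $\dim(\text{Kronecker polytope})=\mathrm{rank}(\text{Kronecker matrix})=g_{\lambda\mu\nu}$. If $\lambda=\mu=\nu=(1,\dots,1)$ then each Specht module is a copy of $\varepsilon$, so $g_{\lambda\mu\nu}=\dim(\varepsilon^{\otimes 3})^{S_n}=0$ for $n\ge 2$, while the Kronecker matrix has a single column, so its polytope is a point and the formula still holds.

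The main obstacle is the sign bookkeeping in the first step: each of the three Specht maps is equivariant only after the twist $V(x_2)\otimes\varepsilon$, so three factors of $\mathrm{sign}(\sigma)$ are collected when $\sigma$ is pulled through the three copies of Young's character, and one must check that these combine with the sign conventions of the Kronecker matrix so that the summands collapse to the untwisted average $\sigma\cdot v_{s,t,u}$ --- landing one on the invariant subspace rather than on a sign-twisted isotypic component. Everything else is formal: equivariant surjections carry spanning sets to spanning sets, taking $S_n$-invariants is exact in characteristic zero, and the passage from matrix rank to polytope dimension reuses the row-sum trick of Theorem~\ref{Theorem:origin} verbatim.
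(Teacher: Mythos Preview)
Your approach is exactly the paper's: recognize the Kronecker matrix as the tensor product of the three Specht matrices followed by averaging over $S_n$, and read off the rank as the dimension of the resulting space. You go further than the paper in two places: you argue carefully that the polytope dimension equals the matrix rank (via the row-sum/origin-containment trick of Theorem~\ref{Theorem:origin}, which the paper's two-line proof omits entirely), and you explicitly flag the sign bookkeeping as the one point that needs checking.

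Unfortunately that check does not come out the way you assert. With your own equivariance relation $\varphi_\lambda(e_{\sigma s})=\mathrm{sign}(\sigma)\,\sigma\cdot\varphi_\lambda(e_s)$ applied in each of the three tensor factors, one collects $\mathrm{sign}(\sigma)^3=\mathrm{sign}(\sigma)$, so the $(s,t,u)$-column is
\[
\sum_{\sigma\in S_n}\mathrm{sign}(\sigma)\,\sigma\cdot v_{s,t,u},
\]
the antisymmetrizer rather than $n!\,R(v_{s,t,u})$. The column span is therefore the $\varepsilon$-isotypic component of $V(\lambda)\otimes V(\mu)\otimes V(\nu)$, whose dimension is $g_{\lambda^*\mu\nu}=g_{\lambda\mu^*\nu}=g_{\lambda\mu\nu^*}$, not $g_{\lambda\mu\nu}$. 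A concrete witness: for $\lambda=\mu=\nu=(n)$ with $n\ge 2$, every entry of the Kronecker matrix is $\sum_\sigma\mathrm{sign}(\sigma)^3=0$, so the polytope is the single point $0$, whereas $g_{(n)(n)(n)}=1$. The paper's proof contains precisely the same gap---the sentence ``the summation over $S_n$ produces $S_n$-invariant vectors'' is exactly the step that fails, for the same parity reason---so the issue is not a divergence between your argument and the paper's, but a defect in the statement as literally written. Your instinct to single out the sign bookkeeping as ``the main obstacle'' was on the mark; you just stopped one line short of seeing that it does not resolve in favor of the trivial isotypic.
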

\begin{proof}
The tensor product $\mathrm{Specht}(\lambda) \otimes \mathrm{Specht}(\mu) \otimes \mathrm{Specht}(\nu)$ is given by the column span of the matrix with entries $\mathrm{Y}(p, s) \cdot \mathrm{Y}(q, t) \cdot \mathrm{Y}(r, u)$.  The summation over $S_n$ produces $S_n$-invariant vectors.
\qed
\end{proof}


\begin{definition}
If $\lambda, \mu, \nu$ are partitions of $l$, $m$, and $l+m$ respectively, the \emph{Littlewood-Richardson coefficient} $c_{\lambda \mu}^{\nu}$ is defined by
$$
c_{\lambda \mu}^{\nu} = \dim \; (\mathrm{Specht}(\lambda) \boxtimes \mathrm{Specht}(\mu) \otimes \mathrm{Res}^{S_{l+m}}_{S_l \times S_m} \; \mathrm{Specht}(\nu))^{S_l \times S_m},
$$
where $S_l \times S_m$ acts on $\mathrm{Specht}(\lambda) \boxtimes \mathrm{Specht}(\mu)$ separately in the two tensor factors, and $S_l\times S_m$ acts on $\mathrm{Res}^{S_{l+m}}_{S_l \times S_m} \; \mathrm{Specht}(\nu)$ by considering $S_l\times S_m$ as a subgroup of $S_{l+m}$ and using the $S_{l+m}$ action on $\mathrm{Specht}(\nu)$.

We have used the notation $\boxtimes$ for the tensor product with this separated action in order to contrast with the diagonal action that we indicate by $\otimes$.
\end{definition}

\begin{definition}
\label{def:lr-matrix}
The \emph{Littlewood-Richardson matrix} has rows indexed by the product
$$
\{ \mbox{ rearrangements of $x_1$ } \} \times \{ \mbox{ rearrangements of $y_1$ } \} \times \{ \mbox{ rearrangements of $w_1$ } \} 
$$
and columns indexed by the product
$$
\{ \mbox{ rearrangements of $x_2$ } \} \times \{ \mbox{ rearrangements of $y_2$ } \} \times \{ \mbox{ rearrangements of $w_2$ } \}, 
$$
where the $((p, q, r), (s, t, u))$ entry is given by the formula 
$$
\sum_{\sigma \times \tau \in S_l \times S_m} \mathrm{Y}(\sigma s, p) \cdot \mathrm{Y}(\tau t, q) \cdot \mathrm{Y}((\sigma \times \tau) u, r).
$$
Its columns define the \emph{Littlewood-Richardson matroid}.
The convex hull of its columns defines the \emph{Littlewood-Richardson polytope}.  
\end{definition}

Figure~\ref{LittlewoodRichardsonPolytope(2,1)(2,1)(3,2,1)} is a drawing of the Littlewood-Richardson polytope for $\lambda = 2 + 1$, $\mu = 2 + 1$, $\nu = 3 + 2 + 1$.  Since $c_{\lambda \mu}^{\nu} = 2$, this polytope is actually a polygon.
\begin{figure}
\centering
\includegraphics[scale = 2.2]{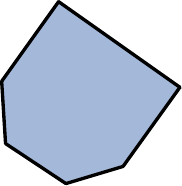}
\caption{Littlewood-Richardson Polytope for $c_{(2,1)(2,1)}^{(3,2,1)} = 2$}
\label{LittlewoodRichardsonPolytope(2,1)(2,1)(3,2,1)}
\end{figure}

\begin{theorem} \label{theorem:littlewood-richardson}
The dimension of the Littlewood-Richardson polytope is the Littlewood-Richardson coefficient $c_{\lambda \mu}^{\nu}$.
\end{theorem}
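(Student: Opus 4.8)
The plan is to imitate the proof of Theorem~\ref{theorem:kronecker}, with the ambient group $S_n$ there replaced by $G := S_l \times S_m$.  Write $v_s$, $v'_t$, $v''_u$ for the columns of the Specht matrices $\varphi(x_1,x_2)$, $\varphi(y_1,y_2)$, $\varphi(w_1,w_2)$; as $s$, $t$, $u$ range over the rearrangements of $x_2$, $y_2$, $w_2$, these columns span $\mathrm{Specht}(\lambda)$, $\mathrm{Specht}(\mu)$, $\mathrm{Specht}(\nu)$, so the simple tensors $v_s\otimes v'_t\otimes v''_u$ span the underlying vector space of $\mathrm{Specht}(\lambda)\boxtimes\mathrm{Specht}(\mu)\otimes\mathrm{Res}^{S_{l+m}}_{S_l\times S_m}\mathrm{Specht}(\nu)$, which sits inside $V(x_1)\otimes V(y_1)\otimes V(w_1)$.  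By Definition~\ref{def:lr-matrix}, the $(s,t,u)$ column of the Littlewood-Richardson matrix is the vector $\sum_{\sigma\times\tau\in G} v_{\sigma s}\otimes v'_{\tau t}\otimes v''_{(\sigma\times\tau)u}$.

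The first key step is to recognize this as a $G$-orbit sum.  From the identity $\sigma\cdot v_r = \mathrm{sign}(\sigma)\,v_{\sigma^{-1}r}$ recorded in \S\ref{section:chow} (and its analogues for the $y$- and $w$-words), we get $v_{\sigma s} = \mathrm{sign}(\sigma)\,\sigma^{-1}\cdot v_s$, $v'_{\tau t} = \mathrm{sign}(\tau)\,\tau^{-1}\cdot v'_t$, and $v''_{(\sigma\times\tau)u} = \mathrm{sign}(\sigma\times\tau)\,(\sigma\times\tau)^{-1}\cdot v''_u = \mathrm{sign}(\sigma)\mathrm{sign}(\tau)\,(\sigma\times\tau)^{-1}\cdot v''_u$.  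The three sign factors multiply to $\mathrm{sign}(\sigma)^2\mathrm{sign}(\tau)^2 = 1$; this cancellation is the crux of the matter, and it is precisely why $\mathrm{Res}^{S_{l+m}}_{S_l\times S_m}\mathrm{Specht}(\nu)$---rather than some sign twist of it---is the object that appears.  Hence the $(s,t,u)$ column equals $\sum_{\sigma\times\tau\in G}(\sigma^{-1}\cdot v_s)\otimes(\tau^{-1}\cdot v'_t)\otimes((\sigma\times\tau)^{-1}\cdot v''_u)$, and, reindexing $(\sigma,\tau)\mapsto(\sigma^{-1},\tau^{-1})$, this is $\sum_{g\in G} g\cdot(v_s\otimes v'_t\otimes v''_u)$ for the natural action of $G = S_l\times S_m$ on $V(x_1)\otimes V(y_1)\otimes V(w_1)$ ($\sigma$ on the first factor, $\tau$ on the second, $\sigma\times\tau\in S_{l+m}$ on the third)---that is, $|G|$ times the image of $v_s\otimes v'_t\otimes v''_u$ under the averaging projection $\pi = \tfrac1{|G|}\sum_{g\in G} g$ onto the $G$-invariants.

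Since we work over $\mathbb{C}$, the operator $\pi$ is a projection onto the invariant subspace, hence surjective onto it; applying it to the spanning set $\{v_s\otimes v'_t\otimes v''_u\}$ of $\mathrm{Specht}(\lambda)\boxtimes\mathrm{Specht}(\mu)\otimes\mathrm{Res}^{S_{l+m}}_{S_l\times S_m}\mathrm{Specht}(\nu)$, I would conclude that the column span of the Littlewood-Richardson matrix equals $\bigl(\mathrm{Specht}(\lambda)\boxtimes\mathrm{Specht}(\mu)\otimes\mathrm{Res}^{S_{l+m}}_{S_l\times S_m}\mathrm{Specht}(\nu)\bigr)^{S_l\times S_m}$, whose dimension is $c_{\lambda\mu}^{\nu}$ by definition.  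In particular the linear span of the vertices of the Littlewood-Richardson polytope has dimension $c_{\lambda\mu}^{\nu}$.  To pass from the linear span to the affine span---and hence to the dimension of the polytope---I would show that the origin lies in the polytope, exactly as in Theorem~\ref{Theorem:origin}: the sum of all columns equals $|G|\,\pi\bigl((\sum_s v_s)\otimes(\sum_t v'_t)\otimes(\sum_u v''_u)\bigr)$, and $\sum_s v_s$, being the sum of the columns of a Specht matrix, vanishes whenever $\lambda\neq(1,\ldots,1)$ by the stabilizer-parity argument of that proof.  So---apart from the degenerate case where $\lambda$, $\mu$ and $\nu$ are all single columns, an exception of the same flavour as the one in Theorem~\ref{Theorem:dimSpechtPolytopeModule}---the barycenter of the vertices is the origin, the affine hull coincides with the linear hull, and the polytope has dimension $c_{\lambda\mu}^{\nu}$.

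The step I expect to be the main obstacle is the identification of the column span with the \emph{entire} invariant subspace rather than merely a subspace of it: this rests on the surjectivity of $\pi$, and hence on characteristic zero (Maschke's theorem).  The sign cancellation of the second paragraph and the location of the origin are routine, the latter directly parallel to facts already proved for Specht polytopes.
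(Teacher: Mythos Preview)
Your proposal is correct and follows the paper's approach exactly: the paper's own proof is the single sentence ``The proof for this theorem is entirely analogous to that of Theorem~\ref{theorem:kronecker},'' and you have carried out precisely that analogy, with $S_l\times S_m$ playing the role of $S_n$.  If anything, you are more careful than the paper---your explicit sign cancellation, your use of the averaging projection to see that the columns span \emph{all} of the invariants, and your verification (parallel to Theorems~\ref{Theorem:origin} and~\ref{Theorem:dimSpechtPolytopeModule}) that the origin lies in the polytope so that affine and linear spans agree, are all left implicit in the paper's two-line sketch; your flagging of the degenerate all-single-column case is likewise a nicety the paper does not mention.
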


The proof for this theorem is entirely analogous to that of Theorem~\ref{theorem:kronecker}.

We now study restriction to the \emph{wreath subgroup} $S_l \wr S_m \subseteq S_{lm}$.
Thinking of $lm$ as an $l \times m$ array of dots to be permuted, the wreath subgroup is generated by the permutations for which every dot stays in its row together with the permutations that perform the same operation in every column simultaneously.  Abstractly, the wreath subgroup is isomorphic to the semidirect product $(S_l)^m \rtimes S_m$ where the second factor acts on the first by permuting coordinates.
\begin{definition}
If $\lambda$, $\mu$, and $\nu$ are partitions of $l$, $m$, and $l \cdot m$ respectively, the \emph{plethysm coefficient} $p_{\lambda\mu}^{\nu}$ is defined by
$$
p_{\lambda\mu}^{\nu} = \dim \; (\mathrm{Specht}(\lambda)^{\boxtimes m} \otimes^{\hspace{-1.5pt}\rtimes} \mathrm{Specht}(\mu) \otimes \mathrm{Res}^{S_{lm}}_{S_l \wr S_m} \mathrm{Specht}(\nu))^{S_l \wr S_m},
$$
where $(S_l)^m \rtimes S_m$ acts on $\mathrm{Specht}(\lambda)^{\boxtimes m} \otimes^{\hspace{-1.5pt}\rtimes} \mathrm{Specht}(\mu)$ by $S_m$ on the second factor, and the normal subgroup $(S_l)^m \trianglelefteq (S_l)^m \rtimes S_m$ acts naturally on the first factor.
\end{definition}

As before, let $x_1, x_2$ be a complementary pair of words of length $l$ that correspond via Theorem~\ref{theorem:diagrams} to the partition $\lambda$, and similarly suppose that $y_i$ correspond to $\mu$, and that $w_i$ correspond to $\nu$.

\begin{definition}
\label{def:pleth-matrix}
The \emph{plethysm matrix} has rows indexed by the product
$$
 \{ \mbox{ rearrangements of $x_1$ } \}^m \times \{ \mbox{ rearrangements of $y_1$ } \} \times \{ \mbox{ rearrangements of $w_1$ } \} 
$$
and columns indexed by the product
$$
\{ \mbox{ rearrangements of $x_2$ } \}^m \times \{ \mbox{ rearrangements of $y_2$ } \} \times \{ \mbox{ rearrangements of $w_2$ } \}, 
$$
where the $((\hat{p},q,r), (\hat{s},t,u))$ entry is given by the formula 
$$
\sum_{\hat{\sigma} \rtimes \tau \in S_l \wr S_m} \mathrm{Y}(\hat{\sigma}_1 \hat{s}_1,\hat{p}_1) \cdot \; \cdots \; \cdot \mathrm{Y}(\hat{\sigma}_m \hat{s}_m, \hat{p}_m) \cdot \mathrm{Y}(\tau t,q) \cdot \mathrm{Y}((\hat{\sigma} \rtimes \tau) u,r).
$$
Its columns define the \emph{plethysm matroid}.
The convex hull of its columns defines the \emph{plethytope}.  
\end{definition}

\begin{theorem} \label{theorem:plethysm}
The dimension of the plethytope is the plethysm coefficient $p_{\lambda \mu}^{\nu}$.
\end{theorem}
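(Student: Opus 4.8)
\emph{Proof proposal.} The plan is to run the argument of Theorems~\ref{theorem:kronecker} and~\ref{theorem:littlewood-richardson}; the only genuinely new ingredient is bookkeeping for the wreath embedding $S_l \wr S_m \hookrightarrow S_{lm}$. First I would introduce the \emph{unsymmetrized matrix} $B$, whose $((\hat p, q, r),(\hat s, t, u))$-entry is the plain product $\mathrm{Y}(\hat s_1, \hat p_1) \cdots \mathrm{Y}(\hat s_m, \hat p_m) \cdot \mathrm{Y}(t, q) \cdot \mathrm{Y}(u, r)$. This is the Kronecker product of $m$ copies of the Specht matrix for $\lambda$ with one copy each of the Specht matrices for $\mu$ and $\nu$, so by Definition~\ref{SpechtMatrix} its column span is the vector space $\mathrm{Specht}(\lambda)^{\boxtimes m} \otimes \mathrm{Specht}(\mu) \otimes \mathrm{Res}^{S_{lm}}_{S_l \wr S_m} \mathrm{Specht}(\nu)$, carrying exactly the $S_l \wr S_m$-action described above: the subgroup $(S_l)^m$ acts slotwise on the $\lambda$-factors (and, through the embedding, on the $\nu$-factor), while $S_m$ permutes the $\lambda$-slots and acts on the $\mu$- and $\nu$-factors.

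Next I would show that the plethysm matrix of Definition~\ref{def:pleth-matrix} is, up to an irrelevant overall scalar, the matrix $\bigl(\sum_{g \in S_l \wr S_m} g\bigr) B$; that is, each of its columns is the image under the group-averaging (Reynolds) operator of the corresponding column of $B$. Concretely, for $g = \hat\sigma \rtimes \tau$ one checks that $g$ applied to the column of $B$ indexed by $(\hat s, t, u)$ is again $\pm$ a column of $B$, the sign arising from repeated use of the identity $\mathrm{Y}(\sigma a, b) = \mathrm{sign}(\sigma)\, \mathrm{Y}(a, \sigma^{-1} b)$; summing these translates over all $g$ then reproduces the entry formula displayed in Definition~\ref{def:pleth-matrix}. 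This is where the three-part action of a wreath element must be handled carefully, with $\hat\sigma_i$ acting on the $i$-th $\lambda$-slot, $\tau$ permuting the $m$ slots and acting on the $\mu$-factor, and the composite $\hat\sigma \rtimes \tau \in S_{lm}$ acting on the $\nu$-factor, all while the running product of signs is kept consistent (possibly after relabeling the summation variable).

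Since the averaging operator $\tfrac{1}{|S_l \wr S_m|}\sum_{g} g$ is idempotent with image the invariant subspace, the column span of the plethysm matrix equals $(\mathrm{Specht}(\lambda)^{\boxtimes m} \otimes \mathrm{Specht}(\mu) \otimes \mathrm{Res}^{S_{lm}}_{S_l \wr S_m} \mathrm{Specht}(\nu))^{S_l \wr S_m}$, which has dimension $p_{\lambda\mu}^{\nu}$ by definition. It remains to see that the plethytope, the convex hull of these columns, has this same dimension rather than one less: its affine hull is contained in its linear span, of dimension $p_{\lambda\mu}^{\nu}$, and equality of the two follows once one knows the origin lies in the plethytope. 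This last point is the analogue of Theorem~\ref{Theorem:origin} and is proved the same way: a suitable average of the columns vanishes because each column is fixed up to sign by a subgroup with equally many even and odd elements, so the affine hull coincides with the linear span.

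I expect the main obstacle to be the second step: making the wreath-product bookkeeping --- which $\mathrm{Y}$-factor each piece of a group element $g = \hat\sigma \rtimes \tau$ acts on, how $\tau$ permutes the $m$ copies of $\mathrm{Specht}(\lambda)$, the precise form of the embedding $S_l \wr S_m \hookrightarrow S_{lm}$ used in the factor $\mathrm{Y}((\hat\sigma \rtimes \tau)u, r)$, and the accumulated signs --- match the formula of Definition~\ref{def:pleth-matrix} exactly. The first and third steps are formally identical to the Kronecker and Littlewood--Richardson cases.
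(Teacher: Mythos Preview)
Your approach matches the paper's exactly: the paper says only that the proof is analogous to Theorem~\ref{theorem:kronecker}, and that two-sentence argument is precisely the one you expand --- the unsymmetrized matrix has column span the tensor product, and summing over $S_l \wr S_m$ projects onto invariants.

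You are in fact more careful than the paper, which never addresses why the \emph{polytope} (rather than merely the linear span of the columns) has dimension $p_{\lambda\mu}^{\nu}$; your final paragraph supplies this missing step. The sketch you give for it is slightly off, however: after averaging, each column is honestly $S_l \wr S_m$-invariant, not invariant up to sign, so the subgroup you invoke does not act the way you describe. A clean fix is to imitate Theorem~\ref{Theorem:origin} on a single tensor factor: as long as, say, $\mu \neq (1^m)$, summing the $((\hat p,q,r),(\hat s,t,u))$-entry over all rearrangements $t$ gives zero (factor out $\sum_t \mathrm{Y}(\tau t,q)$, which vanishes by the row-sum argument of Theorem~\ref{Theorem:origin}), so every row of the plethysm matrix sums to zero and the centroid of the columns is the origin.
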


The proof for this theorem is also analogous to that of Theorem~\ref{theorem:kronecker}.

\section{Computer Calculations} \label{section:code}
The following  \emph{Sage} \cite{Sage} code generates the Specht matrix given a row word and a column word.
\begin{verbatim}
def distinctColumns(w1, w2):
    if len(w2) != len(w2): return False
    seen = set()
    for i in range(len(w1)):
        t = (w1[i], w2[i])
        if t in seen: return False
        seen.add(t)
    return True

def YoungCharacter(w1, w2):
    assert distinctColumns(w1, w2)
    wp = [(w1[i], w2[i]) for i in range(len(w1))]
    def ycfunc(r1, r2):
        if not distinctColumns(r1, r2):
            return 0
        rp = [(r1[i], r2[i]) for i in range(len(w1))]
        po = [wp.index(rx) + 1 for rx in rp]
        return Permutation(po).sign()
    return ycfunc

def SpechtMatrix(w1, w2):
    yc = YoungCharacter(w1, w2)
    mat = []
    for r1 in Permutations(w1):
        row = []
        for r2 in Permutations(w2):
            row = row + [yc(r1, r2)]
        mat = mat + [row]
    return matrix(QQ, mat)

sm22 = SpechtMatrix([1,1,2,2], [1,2,1,2])
print sm22
\end{verbatim}
The output of the code:
\begin{verbatim}
[ 0  1 -1 -1  1  0]
[-1  0  1  1  0 -1]
[ 1 -1  0  0 -1  1]
[ 1 -1  0  0 -1  1]
[-1  0  1  1  0 -1]
[ 0  1 -1 -1  1  0]
\end{verbatim}

Having a Specht matrix, we can use \emph{Macaulay2} \cite{M2} package \emph{Polyhedra} \cite{Polyhedra} to obtain some information about Specht polytopes.
\begin{verbatim}
loadPackage "Polyhedra";
V = matrix{{0,1,-1,-1,1,0},{-1,0,1,1,0,-1},
{1,-1,0,0,-1,1},{1,-1,0,0,-1,1},{-1,0,1,1,0,-1},
{0,1,-1,-1,1,0}}
P = convexHull V
fVector P
\end{verbatim}
The output of the above code, line by line, is:
\begin{verbatim}
      | 0  1  -1 -1 1  0  |
      | -1 0  1  1  0  -1 |
      | 1  -1 0  0  -1 1  |
      | 1  -1 0  0  -1 1  |
      | -1 0  1  1  0  -1 |
      | 0  1  -1 -1 1  0  |
           
                6        6
      Matrix ZZ  <--- ZZ
	
      {ambient dimension => 6           }
       dimension of lineality space => 0
       dimension of polyhedron => 2
       number of facets => 3
       number of rays => 0
       number of vertices => 3
       
       {3, 3, 1}
\end{verbatim}
The following commands give us a description of the faces of co-dimension $i$ and the vertices on each face of a polytope $P$:
\begin{verbatim}
F_i = faces(i,P)
apply(F_i,vertices)
\end{verbatim}
For $i=1$ the output is:
\begin{verbatim}
       {{ambient dimension => 6           },
        dimension of lineality space => 0
        dimension of polyhedron => 1
        number of facets => 2
        number of rays => 0
        number of vertices => 2
        
        {{ambient dimension => 6           },
        dimension of lineality space => 0
        dimension of polyhedron => 1
        number of facets => 2
        number of rays => 0
        number of vertices => 2
        
        {{ambient dimension => 6           }}
        dimension of lineality space => 0
        dimension of polyhedron => 1
        number of facets => 2
        number of rays => 0
        number of vertices => 2
        
      {| -1 1  |, | 0  -1 |, | 0  1  |}
       | 1  0  |  | -1 1  |  | -1 0  |
       | 0  -1 |  | 1  0  |  | 1  -1 |
       | 0  -1 |  | 1  0  |  | 1  -1 |
       | 1  0  |  | -1 1  |  | -1 0  |
       | -1 1  |  | 0  -1 |  | 0  1  |
\end{verbatim}
\label{section:ChowCode}
The following \emph{Sage} code computes the Hilbert Series of the Chow ring for a given matroid. The code computing the Chow ring was contributed to the \emph{Sage} system by Travis Scrimshaw. In the example we use the Specht matrix sm22 computed above.   
\begin{verbatim}
def chow_ring_dimensions(mm, R=None):
    # Setup
    if R is None:
        R = ZZ
    # We only want proper flats
    flats = [X for i in range(1, mm.rank())
             for X in mm.flats(i)]
    E = list(mm.groundset())
    flats_containing = {x: [] for x in E}
    for i,F in enumerate(flats):
        for x in F:
            flats_containing[x].append(i)

    # Create the ambient polynomial ring
    from sage.rings.polynomial\
        .polynomial_ring_constructor 
    import PolynomialRing
    try:
        names = ['A{}'.format(''.join(str(x) 
                 for x in sorted(F)))
                 for F in flats]
        P = PolynomialRing(R, names)
    except ValueError: # variables have
                       # improper names
        P = PolynomialRing(R, 'A', len(flats))
        names = P.variable_names()
    gens = P.gens()
    # Create the ideal of quadratic relations
    Q = [gens[i] * gens[i+j+1]
         for i,F in enumerate(flats)
         for j,G in enumerate(flats[i+1:]) 
         if not (F < G or G < F)]
    # Create the ideal of linear relations
    L = [sum(gens[i] for i in flats_containing[x])
         - sum(gens[i] for i in flats_containing[y])
         for j,x in enumerate(E) for y in E[j+1:]]
    # Compute Hilbert series using Macaulay2
    macaulay2.eval("restart")
    macaulay2.eval("R=QQ[" + str(gens)[1:-1] + "]")
    macaulay2.eval("I=ideal(" + str(Q)[1:-1] + ",
    " + str(L)[1:-1] + ")")
    hs = macaulay2.eval("toString hilbertSeries I")
    T = PolynomialRing(RationalField(),"T").gen()
    return sage_eval(hs, locals={'T':T})
    
chow_ring_dimensions(Matroid(sm22))
\end{verbatim}
The output of the code for our example is
\begin{verbatim}
T+1
\end{verbatim}

We now give the code for Examples~\ref{Example:apprenticeship1} and~\ref{Example:apprenticeship2}.
The following \emph{Sage} commands compute the matroid corresponding to a given matrix, the lattice of flats of a matroid, a list of flats of a given rank, and the characteristic polynomial of a matroid.
\begin{verbatim}
X = matrix([[1, 0, 0, 1, 1, 1], [0, 1, 0, 2, 3, 4],
			[0, 0, 1, 0, 0, 1]])
M = Matroid(X)
M
M.lattice_of_flats()
sorted([sorted(F) for F in M.lattice_of_flats()])
F1 = M.flats(1)
sorted([sorted(F) for F in F1])
rank = M.rank()
Tutte_polynomial = M.tutte_polynomial()
Tutte_polynomial
var('t')
char_poly = (-1)^rank * expand(Tutte_polynomial(1-t,0))
char_poly
\end{verbatim}
The output of the above code is:
\begin{verbatim}
Linear matroid of rank 3 on 6 elements represented over
the Rational Field
Finite lattice containing 18 elements
[[], [0], [0, 1, 2, 3, 4, 5], [0, 1, 3, 4], [0, 2],
[0, 5], [1], [1, 2], [1, 5], [2], [2, 3], [2, 4],
[2, 5], [3], [3, 5], [4], [4, 5], [5]]
[[0], [1], [2], [3], [4], [5]]
x^3 + x*y^2 + y^3 + 3*x^2 + 2*x*y + 2*y^2 + 3*x + 3*y
t
t^3 - 6*t^2 + 12*t - 7
\end{verbatim}

\begin{acknowledgement}
This article was initiated during the Apprenticeship Weeks (22 August-2 September 2016), led by Bernd Sturmfels, as part of the Combinatorial Algebraic Geometry Semester at the Fields Institute.
The authors wish to thank Bernd Sturmfels, Diane Maclagan, Gregory G. Smith for their leadership and encouragement, and all the participants of the Combinatorial Algebraic Geometry thematic program at the Fields Institute, at which this work was conceived.  They also thank the Fields Institute and the Clay Mathematics Institute for hospitality and support.  Finally, thanks to Bernd Sturmfels also for suggesting the term ``matroidification'' and to anonymous referees for their helpful suggestions.
\end{acknowledgement}

\end{document}